\documentclass[3p,10pt,nopreprintline]{elsarticle}

\usepackage[T1]{fontenc}
\usepackage{lmodern}
\usepackage{amsmath,amssymb,amsthm,mathtools}
\usepackage{booktabs,tabularx,array}
\usepackage{microtype}
\usepackage{enumitem}
\usepackage{needspace}
\usepackage{placeins}
\usepackage[hidelinks]{hyperref}
\hypersetup{
  pdftitle={Global strong solutions to the one-dimensional full compressible
  Navier--Stokes equations with temperature-dependent coefficients
  and initial vacuum},
  pdfauthor={Yachun Li, Peng Lu, and Zhaoyang Shang},
  pdfsubject={Global strong solutions for the one-dimensional full
  compressible Navier--Stokes system with initial vacuum},
  pdfkeywords={compressible Navier--Stokes equations;
  temperature-dependent transport coefficients; vacuum; global strong
  solutions; effective viscous flux; exponential relaxation},
  bookmarksnumbered=true
}

\allowdisplaybreaks[1]
\numberwithin{equation}{section}
\biboptions{sort&compress}
\newtheoremstyle{plainjournal}
  {6pt}{6pt}{\itshape}{}{\bfseries}{.}{0.5em}{}
\newtheoremstyle{definitionjournal}
  {6pt}{6pt}{\normalfont}{}{\bfseries}{.}{0.5em}{}

\theoremstyle{plainjournal}
\newtheorem{theorem}{Theorem}[section]
\newtheorem{proposition}[theorem]{Proposition}
\newtheorem{lemma}[theorem]{Lemma}
\newtheorem{corollary}[theorem]{Corollary}
\theoremstyle{definitionjournal}
\newtheorem{definition}[theorem]{Definition}
\newtheorem{remark}[theorem]{Remark}

\newcommand{\I}{(0,1)}

\newcommand{\dd}{\mathop{}\!\mathrm d}
\newcommand{\norm}[1]{\lVert#1\rVert}

\newcommand{\bt}{\bar\theta}
\newcommand{\Dt}{D_t}
\newcommand{\Acal}{\mathcal A}
\newcommand{\Dcal}{\mathcal D}
\newcommand{\Ecal}{\mathcal E}
\newcommand{\Ycal}{\mathcal Y}
\newcommand{\Zcal}{\mathcal Z}
\newcolumntype{L}[1]{>{\raggedright\arraybackslash}p{#1}}
\newcolumntype{Y}{>{\raggedright\arraybackslash}X}

\allowdisplaybreaks[4]

\begin{document}
\begin{frontmatter}

\title{Global strong solutions to the 1D full compressible Navier--Stokes equations with temperature-dependent coefficients
and vacuum}

\author[label1]{Yachun Li}
\address[label1]{School of Mathematical Sciences, MOE--LSC, and SHL--MAC,
Shanghai Jiao Tong University, Shanghai 200240, P.R. China}
\ead{ycli@sjtu.edu.cn}

\author[label2]{Peng Lu}
\address[label2]{Department of Mathematics, Zhejiang Sci-Tech University,
Hangzhou, Zhejiang 310018, P.R. China}
\ead{plu25@zstu.edu.cn}

\author[label3]{Zhaoyang~Shang\corref{cor1}}
\address[label3]{School of Finance, Shanghai Lixin University of Accounting
and Finance, Shanghai 201209, P.R. China}
\cortext[cor1]{Corresponding author.}
\ead{shangzhaoyang@sjtu.edu.cn}

\begin{abstract}
We study the initial--boundary value problem for the one-dimensional full
compressible Navier--Stokes system whose viscosity and heat conductivity are
given by $\mu(\theta)=\mu\theta^\alpha$ and
$\kappa(\theta)=\kappa\theta^\beta$, respectively.  The initial density is
nonnegative and is allowed to vanish.  For $\alpha>2$ and
$\beta>2\alpha+4$, we prove the existence of a global strong solution for
arbitrarily large compatible $H^2$ data, provided that the prescribed
boundary temperature is sufficiently high relative to their size.  No
velocity or temperature perturbation is required to shrink as the boundary
temperature tends to infinity.  A shifted initial-layer weight and an exact
symmetrization of the two material-derivative equations separate the fast
thermal mode from the momentum mode.  The resulting estimates are uniform
with respect to the positive lower bound used in the approximation and
therefore survive the limit to genuine vacuum.  The proof further combines a
relative-entropy identity centered at the conserved mean density, the
physical effective viscous flux, a Zlotnik-type estimate along particle
trajectories, and finite-time transport estimates for the density
derivatives.  The density remains uniformly bounded from above and the
temperature stays uniformly comparable with the prescribed boundary
temperature.  After the initial layer we establish vacuum-compatible
exponential relaxation: the density converges to its conserved mean in every
finite $L^p$, the velocity and temperature perturbation decay in $H^1$, and
the weighted material-derivative energy decays at rate
$c\bar\theta^{1-\alpha}$.  Vacuum points are nevertheless
transported by the bi-Lipschitz flow.  Thus density relaxation cannot occur in
$L^\infty$ or $H^1$; if genuine vacuum is initially present, the $L^2$
relaxation forces the density gradient to grow exponentially.  This
simultaneous low-order equilibration and vacuum-layer sharpening is a
distinctive feature of the problem.
\end{abstract}

\begin{keyword}
compressible Navier--Stokes equations \sep
temperature-dependent transport coefficients \sep vacuum \sep
global strong solution \sep effective viscous flux \sep exponential relaxation

\MSC[2020] 35Q30 \sep 35Q35 \sep 35B40 \sep 76N10
\end{keyword}

\end{frontmatter}

\section{Introduction}\label{sec:introduction}

The motion of a viscous, heat-conducting ideal gas is described by the full
compressible Navier--Stokes equations.  Even in one space dimension, the
simultaneous presence of vacuum and state-dependent transport coefficients
creates a genuinely degenerate coupling: the density multiplying the
material derivatives may vanish, while the highest-order coefficients vary
with the unknown temperature.  The purpose of this paper is to establish a
global strong-solution theory for this regime under a high-temperature
hypothesis without any smallness condition on the compatible initial data.

Let $\rho$, $u$, and $\theta$ denote the density, velocity, and absolute
temperature.  On the interval $\I$ we consider
\begin{equation}\label{eq:NSF}
\left\{
\begin{aligned}
 &\rho_t+(\rho u)_x=0,\\
 &\rho\dot u+(R\rho\theta)_x=(\mu\theta^\alpha u_x)_x,\\
 &\rho\dot\theta+R\rho\theta u_x
   =\mu\theta^\alpha u_x^2+(\kappa\theta^\beta\theta_x)_x,
\end{aligned}\right.
\end{equation}
where $\dot f:=f_t+u f_x$, $R,\mu,\kappa>0$.  The specific heat at constant volume has been
normalized to one.  We impose the no-slip and fixed-temperature conditions
\begin{equation}\label{eq:boundary}
 u(0,t)=u(1,t)=0,
 \qquad \theta(0,t)=\theta(1,t)=\bt>0,
\end{equation}
and the initial condition
\begin{equation}\label{eq:initial}
 (\rho,u,\theta)|_{t=0}=(\rho_0,u_0,\theta_0).
\end{equation}
Below, $\Dt=\partial_t+u\partial_x$ denotes the material derivative.

For constant positive transport coefficients, the one-dimensional theory is
classical.  Global large solutions without vacuum go back to
Kazhikhov and Shelukhin~\cite{KazhikhovShelukhin1977}; see also the
small-perturbation Cauchy theory of Kawashima and
Nishida~\cite{KawashimaNishida1981}.  Kawohl~\cite{Kawohl1985} allowed a
broad class of state-dependent heat conductivities, and subsequent works
developed both strong- and weak-solution theories for temperature-dependent
heat conduction; representative results include
\cite{JenssenKarper2010,PanZhang2015}.  These arguments profit essentially
from either a density lower bound or a viscosity coefficient bounded away
from zero independently of the evolving temperature.

Power-law temperature dependence is motivated by high-temperature gas
models and presents a different analytical problem.  Liu, Yang, Zhao and
Zou~\cite{LiuYangZhaoZou2014} obtained global large classical solutions in a regime
where the adiabatic exponent is close to one.  Wang and
Zhao~\cite{WangZhao2016} treated viscosity and heat conductivity of the form
$h(v)\theta^\alpha$ for sufficiently small $|\alpha|$.  With constant
viscosity and power-law heat conductivity, Huang and Shi~\cite{HuangShi2020}
proved nonlinear exponential stability for arbitrarily large non-vacuum
data.  For bounded intervals, Sun, Zhang and
Zhao~\cite{SunZhangZhao2021} established nonlinear exponential stability
when the temperature exponent in the viscosity is small, and Dong and
Tan~\cite{DongTan2025} subsequently obtained an $H^2$ stability theory for
temperature-dependent transport coefficients.  More recently, Dong and
Guo~\cite{DongGuo2025} studied the Cauchy problem for large data.  In all
these global temperature-dependent-viscosity results, the density remains
strictly positive.  Thus their estimates cannot be applied uniformly to a
sequence whose density lower bound tends to zero.  A theorem-level comparison
with the vacuum results is given in Table~\ref{tab:comparison} below.

The strong-solution theory with vacuum follows a separate line.  Cho and
Kim~\cite{ChoKim2006} established a local theory for full compressible flows
under vacuum-compatible compatibility conditions.  For one-dimensional
fixed-domain systems, Wen and Zhu~\cite{WenZhu2013} constructed global
classical large solutions with vacuum for constant viscosity and
temperature-dependent heat conductivity, whereas Li~\cite{Li2019} proved
global $H^2$ well-posedness for nonnegative density when both transport
coefficients are constant.  Neither result covers a viscosity of the form
$\theta^\alpha$.  Zhang and Fang~\cite{ZhangFang2008} developed a
multidimensional local vacuum theory for state-dependent coefficients, but
their hypotheses and solution framework do not provide the uniform global
estimates required here.

Several global results with variable transport coefficients are closely
related but structurally different.  At the finite-energy level,
Feireisl~\cite{Feireisl2004Motion} established global variational solutions
to the full system with viscosity coefficients depending on the temperature.
Wang and Zuo~\cite{WangZuo2022} treated three-dimensional
global weak solutions with temperature-dependent viscosity and heat
conductivity for strictly positive initial density.  In the strong-solution
class, Yu and Zhang~\cite{YuZhang2020} allowed initial vacuum in a
three-dimensional bounded domain with density--temperature-dependent
viscosities, under a small-gradient condition.  These multidimensional weak
or small-data theories use structures different from the one-dimensional
large-data estimates developed below.

Vacuum free-boundary problems provide another important comparison.  Li, Ma
and Ou~\cite{LiMaOu2019} obtained global strong solutions when the viscosity
depends on the density and the heat conductivity on the temperature.  Chen
and Zhang~\cite{ChenZhang2021} treated Chapman--Enskog temperature power laws
for both coefficients and proved global strong solvability for data near a
background solution.  In these free-boundary settings, however, the density
is positive in the moving fluid interior and vanishes at its boundary;
there is no persistent interior vacuum point in a fixed spatial interval.

For pure temperature powers and vacuum in fixed or unbounded domains, Cao and
Li~\cite{CaoLi2022} proved local strong solvability in a three-dimensional
bounded domain under a small-measure condition on the initial vacuum set.
More recently, Cao and Jiang~\cite{CaoJiang2026} obtained local-in-time
regular solutions for a two-dimensional Cauchy problem with far-field vacuum
when viscosity and heat conductivity obey the same temperature power.  A
related global incompressible theory is available in~\cite{HeLi2024}.
These results leave
open the fixed-domain problem in which the density vanishes inside the fluid
region while both transport coefficients obey possibly different
temperature power laws.

To the best of our knowledge, no global strong-solution result was previously
available for the one-dimensional fixed-boundary problem with genuine
interior vacuum and simultaneous pure temperature laws for viscosity and
heat conductivity.  The contribution of the present paper is to close this
specific gap for arbitrarily large compatible data in a high-temperature
regime.  Here ``arbitrarily large'' has its standard quantified meaning: an
arbitrary data bound is fixed first, and the required temperature threshold
is then allowed to depend on that bound.  Notice that the
temperature remains uniformly positive in our construction; the degeneracy
comes from the vanishing density in front of the material derivatives, not
from a vanishing viscosity along the solution.

The large-time behavior has two complementary aspects.  Let
$X_t(a,t)=u(X(a,t),t)$ and $X(a,0)=a$.  The continuity equation implies
\begin{equation}\label{eq:vacuum-transport-intro}
 \rho(X(a,t),t)X_a(a,t)=\rho_0(a).
\end{equation}
Hence every initial vacuum point persists for all finite times.  Because
$H^1(0,1)$ embeds continuously into $C([0,1])$, convergence of $\rho$ to its
positive conserved mean in $H^1$ would contradict
\eqref{eq:vacuum-transport-intro}.  A vacuum-compatible theorem therefore
cannot contain uniform dissipation of $\norm{\rho_x}_{L^2}^2$ over
$(0,\infty)$, nor exponential decay of the unweighted density gradient.
This does not prevent relaxation below the density-gradient level.  We prove
that $\rho$ converges exponentially to its mean in every finite $L^p$, while
$u$ and $\theta-\bt$ decay in $H^1$ and the weighted material derivatives
decay in $L^2$.  If a vacuum point is present, the same estimate forces
$\norm{\rho_x}_{L^2}$ to grow exponentially.  Thus the solution equilibrates in
low-order norms while an increasingly sharp vacuum layer preserves the exact
zero.  Correspondingly, high Sobolev norms of the density are controlled on
arbitrary finite time intervals rather than uniformly for all time.

The principal analytical difficulties are threefold.  First, division by
$\rho$ is unavailable, so the momentum and temperature equations do not
directly control unweighted time derivatives.  Second, differentiation of
$\theta^\alpha$ and $\theta^\beta$ produces nonlinear commutators whose
powers must be balanced uniformly in the large reference temperature.
Third, the density equation is hyperbolic and has no intrinsic smoothing;
its higher derivatives must be propagated through the velocity Lipschitz
norm without introducing a uniform-in-time density-gradient dissipation.

Our argument resolves these issues as follows.\nopagebreak[4]
\begin{enumerate}[label=\textup{(\roman*)},leftmargin=2.2em]
\item We use relative entropy centered at the actual conserved mean density,
      rather than at a prescribed positive equilibrium.  Temperature
      coercivity is recovered through the Dirichlet boundary condition, so
      no density lower bound enters the basic estimate.
\item The physical effective viscous flux
      $F=\mu\theta^\alpha u_x-R\rho\theta$ satisfies
      $F_x=\rho\dot u$.  A weighted spatial anchor and a Zlotnik estimate
      along trajectories provide a density upper bound uniform in the
      strictly positive approximation parameter.
\item Material-derivative estimates are organized with powers of $\bt$.
      To accommodate the natural initial scales
      $\norm{\sqrt\rho\dot u(0)}_2=O(\bt^\alpha)$ and
      $\norm{\sqrt\rho\dot\theta(0)}_2=O(\bt^\beta)$, we introduce the
      shifted weight
      $\sigma_{\bt}(t)=\min\{1,t+\bt^{1-\beta}\}$.  Multiplying the
      differentiated momentum identity by $\bt$ before adding the thermal
      identity cancels their apparently singular leading coupling.  Under
      $\alpha>2$ and $\beta>2\alpha+4$, all remaining commutators are
      absorbed once $\bt$ is sufficiently large.  This range is
      intentionally conservative and is not claimed to be optimal.
\item We solve positive-density approximations, derive estimates independent
      of their lower bounds, and pass to the vacuum limit by compactness.
      Density derivatives are propagated by finite-time transport estimates,
      while the exact flow formula records persistence of the vacuum set.
\item The effective-flux anchor also yields an instantaneous, vacuum-compatible
      coercivity estimate for the density oscillation.  Adding a suitably
      scaled relative entropy to the differentiated energy produces a strict
      Lyapunov functional and an exponential relaxation rate of order
      $\bt^{1-\alpha}$.  For genuine vacuum, this low-order decay implies
      exponential growth of the density gradient and therefore identifies the
      sharp endpoint of the stability statement.
\end{enumerate}

No smallness is imposed on the density, velocity, or temperature
perturbation.  Their $H^2$ bound may be arbitrarily large.  The compatibility
residuals are bounded only after division by their natural principal-part
scales $\bt^\alpha$ and $\bt^\beta$; these normalized bounds are not
smallness assumptions.  The main theorem is an existence result; uniqueness
in the vacuum class is not claimed.

The remainder of the paper is organized as follows.  Section~\ref{sec:main}
states the strong-solution class and the global existence and relaxation
theorems.
Section~\ref{sec:entropy} develops the relative entropy and the
vacuum-compatible pointwise anchors.  The
uniform first-order and material-derivative estimates are proved in
Section~\ref{sec:uniform}, and the finite-time higher-order estimates in
Section~\ref{sec:higher}.  Section~\ref{sec:construction} constructs the
solution by positive-density approximation and continuation.  Finally,
Section~\ref{sec:persistence} establishes exponential relaxation below the
density-gradient level, proves persistence of vacuum, and quantifies the
resulting sharpening of the vacuum layer.

\section{Main theorem}\label{sec:main}

Throughout the paper, all Lebesgue and Sobolev norms are taken over $(0,1)$
unless another domain is displayed explicitly; in particular,
$\norm{\cdot}_{L^p}=\norm{\cdot}_{L^p(0,1)}$ and
$H^k=H^k(0,1)$.  Constants denoted by $C$ may change from line to line.  They
never depend on a positive lower bound for the density.  Constants depending
on a finite final time are denoted by $C_T$.

\begin{definition}\label{def:strong}
A triple $(\rho,u,\theta)$ is a strong solution on $[0,T]$ if
\begin{align*}
 &\rho\in C([0,T];H^2),\qquad
   \rho_t\in C([0,T];H^1),\\
 &u,\theta-\bt\in C([0,T];H^2\cap H_0^1)
    \cap L^2(0,T;H^3),\\
 &\sqrt\rho\,\dot u,\sqrt\rho\,\dot\theta
      \in L^\infty(0,T;L^2),
   \qquad \dot u,\dot\theta\in L^2(0,T;H_0^1),
\end{align*}
with $\rho\geq0$ and $\theta>0$.  The equations \eqref{eq:NSF} hold almost
everywhere in $(0,1)\times(0,T)$, whereas the boundary and initial conditions
\eqref{eq:boundary}--\eqref{eq:initial} are attained in the trace sense.
A global strong solution is a strong solution on $[0,T]$ for every finite
$T>0$.
\end{definition}

\begin{theorem}[Global strong existence allowing initial vacuum]\label{thm:main}
Let $R,\mu,\kappa>0$ and $K_0\geq1$.  Fix mass bounds
$0<m_*<m^*$, and assume
\begin{equation}\label{eq:exponents}
 \alpha>2,\qquad \beta>2\alpha+4.
\end{equation}
There exists
\[
 \Theta_0=\Theta_0(R,\mu,\kappa,\alpha,\beta,K_0,m_*,m^*)\geq1
\]
with the following property.  For every boundary temperature
$\bt\geq\Theta_0$, prescribe the initial trace
\[
 (\rho,u,\theta)(x,0)=(\rho_0,u_0,\theta_0)(x),
 \qquad x\in(0,1).
\]
The complete assumptions on the initial triple
$(\rho_0,u_0,\theta_0)$ and its compatibility data are the following; no
additional initial-data hypothesis is imposed elsewhere.
\begin{enumerate}[label=\textup{(H\arabic*)},leftmargin=3.2em,itemsep=4pt]
\item \emph{Initial density and total mass.}  The initial density obeys
\begin{equation}\label{eq:data-density}
 \rho_0\in H^2,\qquad \rho_0\geq0,\qquad
 m_*\leq m:=\int_0^1\rho_0\dd x\leq m^*,\qquad
 \norm{\rho_0}_{H^2}\leq K_0.
\end{equation}
No positive lower bound is imposed on $\rho_0$; beyond the compatibility
condition \textup{(H3)}, no smallness or measure restriction is placed on its
zero set.

\item \emph{Velocity and temperature data.}  The remaining data obey
\begin{equation}\label{eq:data-dynamic}
 u_0\in H^2\cap H_0^1,\qquad
 \theta_0-\bt\in H^2\cap H_0^1,\qquad
 \norm{u_0}_{H^2}^2+\norm{\theta_0-\bt}_{H^2}^2\leq K_0,
 \qquad
 \frac34\bt\leq\theta_0\leq\frac54\bt.
\end{equation}
Thus, in particular,
\[
 u_0(0)=u_0(1)=0,\qquad
 \theta_0(0)=\theta_0(1)=\bt.
\]

\item \emph{Compatibility conditions.}  There exist
$g_1,g_2\in L^2(0,1)$, taken to vanish almost everywhere on
$\{\rho_0=0\}$, such that the relations
\begin{equation}\label{eq:compatibility}
\left\{
\begin{aligned}
 &(\mu\theta_0^\alpha u_{0x})_x-(R\rho_0\theta_0)_x
       =\sqrt{\rho_0}\,g_1,\\
 &(\kappa\theta_0^\beta\theta_{0x})_x
       +\mu\theta_0^\alpha u_{0x}^2
       -R\rho_0\theta_0u_{0x}
       =\sqrt{\rho_0}\,g_2
\end{aligned}\right.
\end{equation}
hold almost everywhere.  In particular, the second relation includes the
pressure-work term $-R\rho_0\theta_0u_{0x}$.

\item \emph{Natural compatibility scale.}  The compatibility data satisfy
\begin{equation}\label{eq:natural-data}
 \bt^{-2\alpha}\norm{g_1}_{L^2}^2
 +\bt^{-2\beta}\norm{g_2}_{L^2}^2\leq K_0.
\end{equation}
The number $K_0$ is arbitrary and is not assumed small.  The powers in
\eqref{eq:natural-data} merely normalize the principal parts of
\eqref{eq:compatibility}: for fixed $u_0$ and $\theta_0-\bt$, their natural
sizes are $g_1=O(\bt^\alpha)$ and $g_2=O(\bt^\beta)$.
\end{enumerate}
Then problem \eqref{eq:NSF}--\eqref{eq:initial} admits at least one global
strong solution in the sense of Definition~\ref{def:strong}.
\smallskip

\noindent\emph{Uniform pointwise bounds.}
There are constants $M$ and $C$ depending only on
$R,\mu,\kappa,\alpha,\beta$, $K_0$, $m_*$, and $m^*$.  They are independent
of time, $\bt$, and any positive lower bound for $\rho_0$, and satisfy
\begin{equation}\label{eq:pointwise-main}
 0\leq\rho(x,t)\leq M,\qquad
 \frac12\bt\leq\theta(x,t)\leq\frac32\bt
 \quad (x\in[0,1],\ t\geq0),
\end{equation}
\smallskip

Put
\begin{equation}\label{eq:sigma-main}
 \tau_{\bt}=\bt^{1-\beta},\qquad
 \sigma_{\bt}(t)=\min\{1,t+\tau_{\bt}\}.
\end{equation}

\noindent\emph{Initial-layer and post-layer energy estimates.}
\nopagebreak[4]
\begin{align}\label{eq:uniform-main}
 &\operatorname*{ess\,sup}_{0\leq t\leq1}\Bigl[
  \sigma_{\bt}\bt^{\alpha-1}\norm{u_x(t)}_{L^2}^2
 +\sigma_{\bt}\bt^{\beta-2}\norm{\theta_x(t)}_{L^2}^2
 +\sigma_{\bt}^2\bt^{-1}
       \norm{\sqrt{\rho}\,\dot u(t)}_{L^2}^2
 +\sigma_{\bt}^2\bt^{-2}
       \norm{\sqrt{\rho}\,\dot\theta(t)}_{L^2}^2
 \Bigr]\nonumber\\
 &\quad+\int_0^1\Bigl[
  \bt^{\alpha-1}\norm{u_x}_{L^2}^2
 +\bt^{\beta-2}\norm{\theta_x}_{L^2}^2
 +\sigma_{\bt}\bt^{-1}
       \norm{\sqrt{\rho}\,\dot u}_{L^2}^2
 +\sigma_{\bt}\bt^{-2}
       \norm{\sqrt{\rho}\,\dot\theta}_{L^2}^2\nonumber\\[-1pt]
 &\quad
 +\sigma_{\bt}^2\bt^{\alpha-1}\norm{\dot u_x}_{L^2}^2
 +\sigma_{\bt}^2\bt^{\beta-2}\norm{\dot\theta_x}_{L^2}^2
 \Bigr]\dd t\leq C.
\end{align}
At the endpoint of the initial layer, $\sigma_{\bt}(1)=1$; the post-layer
closure then gives
\begin{equation}\label{eq:post-layer-A}
 \operatorname*{ess\,sup}_{t\geq1}\Acal(t)
 +\int_1^\infty\Dcal(t)\dd t\leq C,
\end{equation}
where $\Acal$ and $\Dcal$ are defined in
\eqref{eq:A}--\eqref{eq:D} below.
\smallskip

\noindent\emph{Finite-time higher-order estimates.}
For each $T<\infty$ there is a constant $C_T$, depending only on
$T$, $\bt$, $R,\mu,\kappa,\alpha,\beta$, $K_0$, $m_*$, and $m^*$.
It may grow with $T$, but is independent of any positive lower bound for
$\rho_0$, and satisfies
\begin{align}\label{eq:finite-high}
 &\sup_{0\leq t\leq T}
   \bigl(\norm{\rho(t)}_{H^2}+\norm{u(t)}_{H^2}
        +\norm{\theta(t)-\bt}_{H^2}\bigr)+\int_0^T\bigl(\norm{u}_{H^3}^2+\norm{\theta-\bt}_{H^3}^2
       +\norm{\dot u}_{H^1}^2+\norm{\dot\theta}_{H^1}^2\bigr)\dd t
 \leq C_T.
\end{align}
\smallskip

\noindent\emph{Persistence of vacuum.}
For every $a\in[0,1]$, let $X(a,t)$ be the particle path defined by
\[
 X_t(a,t)=u(X(a,t),t),\qquad X(a,0)=a.
\]
If $\rho_0(a)=0$, then $\rho(X(a,t),t)=0$ for every finite $t$.
\end{theorem}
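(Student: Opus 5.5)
The proof is a continuation argument built on a positive-density regularization. Every a priori bound is derived independently of the lower bound of the density, and we then pass to the limit.

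\textbf{Approximation.} Fix $\delta\in(0,1)$ and set $\rho_0^\delta=\rho_0+\delta$. We modify $u_0,\theta_0$ near $\{\rho_0=0\}$ only so that the compatibility relations \eqref{eq:compatibility} hold with $\sqrt{\rho_0^\delta}g_i^\delta$ and with $g_i^\delta\to g_i$ in $L^2$. The simplest choice is to keep $(u_0,\theta_0)$ and define $g_i^\delta$ as the left-hand side divided by $\sqrt{\rho_0+\delta}$. This gives $\|g_i^\delta\|_{L^2}\le\|g_i\|_{L^2}+C\delta^{1/2}$, with the correction coming from the pressure term, so \eqref{eq:natural-data} persists with $K_0$ replaced by $2K_0$. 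Classical non-vacuum theory (Kazhikhov--Shelukhin type local existence in $H^2$) then gives a local strong solution with $\rho^\delta>0$ and $\theta^\delta>0$. Global existence for fixed $\delta$ follows by continuation once we have $H^2$ bounds on $[0,T]$; for fixed $\delta$ these may depend on $\delta$.

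\textbf{A priori bounds.} The core is a bootstrap on a maximal interval $[0,T^*)$. We assume $\tfrac12\bt\le\theta\le\tfrac32\bt$, $\rho\le 2M$, and the left side of \eqref{eq:uniform-main} at most $2C$, and we improve each of these.

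1. Relative entropy centered at the mean $m$, with the Dirichlet condition $\theta=\bt$ on the boundary, gives $\bt$-scaled $L^2$ control of $u_x$ and $\theta_x$ without using a lower bound on $\rho$.

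2. The effective flux $F=\mu\theta^\alpha u_x-R\rho\theta$ satisfies $F_x=\rho\dot u$. Its spatial average is controlled by the vanishing boundary values of $u$. Along trajectories, $\frac{d}{dt}\log\rho=-u_x=-(F+R\rho\theta)/(\mu\theta^\alpha)$. A Zlotnik lemma then gives $\rho\le M$, because the damping term $-R\rho\theta^{1-\alpha}/\mu$ dominates for large $\rho$, while $\int|F|\theta^{-\alpha}$ is small in $\bt$.

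3. Material-derivative estimates: apply $\Dt$ to the momentum and temperature equations and test with $\sigma_{\bt}^2\dot u$ and $\sigma_{\bt}^2\dot\theta$. The momentum identity is multiplied by $\bt$ before the two are added, so the leading cross terms $\mu\alpha\theta^{\alpha-1}\dot\theta u_x\dot u_x$ cancel against the work terms. The remaining commutators are bounded by negative powers of $\bt$ under $\alpha>2$ and $\beta>2\alpha+4$. The shift $\tau_{\bt}$ together with (H4) makes the initial values $O(1)$ after weighting.

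4. The temperature bound follows from $\|\theta-\bt\|_{L^\infty}\le\|\theta_x\|_{L^2}\lesssim\bt^{1-\beta/2}\ll\bt$ for $t\ge$ the layer time, and for short times from the initial bound together with $\int\|\theta_t\|$.

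5. After $t=1$, the quantities $\Acal$ and $\Dcal$ close similarly via a Gronwall-free absorption, which gives \eqref{eq:post-layer-A}.

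6. On $[0,T]$, the $H^2$ norm of $\rho$ is propagated by transport: differentiate twice, and use $\int_0^T\|u_x\|_{L^\infty}+\|u_{xx}\|_{H^1}\,dt$. This $H^3$ control of $u$ comes from elliptic regularity applied to $(\mu\theta^\alpha u_x)_x=\rho\dot u+(R\rho\theta)_x$, with $\dot u\in L^2H^1$. The constant $C_T$ is allowed to grow exponentially in $T$.

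\textbf{Limit and vacuum.} The bounds from Step 3 through Step 6 are independent of $\delta$. Aubin--Lions compactness gives strong convergence of $\rho^\delta$ in $C([0,T];H^1)$ and of $u^\delta,\theta^\delta$ in $C([0,T];H^1)$, with weak-$*$ convergence in the top norms. Hence the equations pass to the limit almost everywhere. The weighted quantities $\sqrt\rho\dot u$ and $\sqrt\rho\dot\theta$ are identified through $\rho\dot u=F_x$ and lower semicontinuity.

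Persistence of vacuum comes from $u\in L^1(0,T;W^{1,\infty})$. The flow $X$ is then bi-Lipschitz with $X_a=\exp\int_0^t u_x\,ds>0$, and \eqref{eq:vacuum-transport-intro} gives $\rho(X(a,t),t)=\rho_0(a)/X_a=0$.

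\textbf{Main obstacle.} We expect Step 3 to be the hardest: the symmetrized material-derivative energy inequality, with the exponent bookkeeping over every commutator such as $\beta\theta^{\beta-1}\theta_x\dot\theta$ terms and $\theta^{\alpha-2}\dot\theta^2u_x^2$. The first approach would be to enumerate the terms with their $\bt$-weights, use the pointwise bounds and $\|\theta_x\|_{L^\infty}\lesssim\|\theta_{xx}\|$ expressed through the heat equation, and check that each has a negative net power of $\bt$.
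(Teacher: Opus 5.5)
Your outline follows the paper's architecture: the $\rho_0+\delta$ approximation with $g_i^\delta$ obtained by division, relative entropy centred at $m$, effective flux plus Zlotnik, the shifted weight $\sigma_{\bt}$, post-layer absorption, finite-time transport bounds for $\rho_x,\rho_{xx}$, Aubin--Lions, and the flow map. However, Step~3 as written would not close, and Step~2 states a false smallness claim. All of these are repairable.

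\textbf{Step~3.} The terms you say cancel do not. After multiplying the differentiated momentum identity by $\bt$, the only other term of the same form is the viscous-heating contribution, and the sum is
$-\mu\alpha\bt\int_0^1\theta^{\alpha-1}\dot\theta u_x\dot u_x\dd x+2\mu\int_0^1\theta^\alpha u_x\dot u_x\dot\theta\dd x\approx(2-\alpha)\mu\bt^\alpha\int_0^1u_x\dot u_x\dot\theta\dd x$. This is nonzero for $\alpha>2$.
\begin{itemize}
\item In the paper, the exact cancellation is between the two pressure couplings:
$\bt R\int_0^1\rho\dot\theta\dot u_x\dd x-R\int_0^1\rho\theta\dot u_x\dot\theta\dd x=R\int_0^1\rho(\bt-\theta)\dot\theta\dot u_x\dd x$.
\item The viscous cross terms are estimated, not cancelled. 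They carry a factor $u_x$, and $\norm{u_x}_{L^2}^2\leq K\sigma^{-1}\bt^{1-\alpha}$ together with $\sigma\geq\bt^{1-\beta}$ makes them $O(\bt^{-1/2})$ times the $\dot u_x,\dot\theta_x$ dissipation.
\item Testing only the differentiated equations by $\sigma^2\dot u,\sigma^2\dot\theta$ cannot close. Differentiating the weight produces $\sigma\sigma'\bigl(\bt^{-1}\norm{\sqrt\rho\dot u}_{L^2}^2+\bt^{-2}\norm{\sqrt\rho\dot\theta}_{L^2}^2\bigr)$, which the entropy bound of Step~1 does not control.
\item You also need the first-level identities: the momentum equation tested by $\dot u$ with weight $\sigma\bt^{-1}$, and the temperature equation tested by $\dot\theta$ with weight $\sigma\bt^{-2}$. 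They supply that missing dissipation. They also give the $\sup_t\sigma\bt^{\alpha-1}\norm{u_x}_{L^2}^2$ and $\sup_t\sigma\bt^{\beta-2}\norm{\theta_x}_{L^2}^2$ parts of your bootstrap. The paper's functional $\mathcal W$ combines both levels with $L\bt^{-\alpha}\Ecal$.
\end{itemize}

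\textbf{Step~2.} The claim that $\int\norm{\theta^{-\alpha}F}_{L^\infty}\dd t$ is small in $\bt$ is false. To leading order $F\approx-Rm\bt$, so $\theta^{-\alpha}|F|\sim m\bt^{1-\alpha}$. This is the same order as the damping coefficient, and its time integral grows linearly. The working argument is as follows.
\begin{itemize}
\item Split $F=(F+\Pi)-\Pi$ with the $\theta^{-\alpha}$-weighted anchor $\Pi$. Because $\int_0^1u_x\dd x=0$, one has $\int_0^1\theta^{-\alpha}(F+\Pi)\dd x=0$, hence $\norm{F+\Pi}_{L^\infty}\leq m^{1/2}\norm{\sqrt\rho\dot u}_{L^2}$.
\item Put the source $\theta^{-\alpha}\Pi\leq C_0m\bt^{1-\alpha}$ into $g$, against the damping $-c_0\bt^{1-\alpha}e^y$. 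This fixes the threshold $\zeta$.
\item Only the anchored remainder is small. On $[0,1]$ it is at most $C\bt^{1/2-\alpha}\log\bt$, using $\norm{\sqrt\rho\dot u}_{L^2}\leq C\bt^{1/2}\sigma^{-1}$.
\item On $[1,\infty)$ it is bounded only by $CC_*\bt^{1-\alpha}+\tfrac{c_0}{4}\bt^{1-\alpha}(t_2-t_1)$. So the Zlotnik lemma must be applied there with $N_1=\tfrac{c_0}{4}\bt^{1-\alpha}>0$.
\end{itemize}

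With these repairs your plan coincides with the paper's proof. Your elliptic route in Step~6 amounts to the paper's elimination of $u_{xx}$ from the $\rho_x$ equation.
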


\begin{theorem}[Exponential relaxation and vacuum-layer sharpening]
\label{thm:relaxation}
Let $(\rho,u,\theta)$ be the global strong solution furnished by
Theorem~\ref{thm:main}, and let
$m=\int_0^1\rho_0\dd x$.  There exist constants $c_{\rm d},C_{\rm d}>0$,
depending only on $R,\mu,\kappa,\alpha,\beta,K_0,m_*$, and $m^*$, but
independent of $\bt$ and of any positive lower bound for the density, such
that, with
\begin{equation}\label{eq:decay-rate}
 \lambda_{\rm d}=c_{\rm d}\bt^{1-\alpha},
\end{equation}
one has, for almost every $t\geq1$,
\begin{align}
 &\norm{\rho(t)-m}_{L^2}^2
 +\bt^{\alpha-2}\norm{u(t)}_{H^1}^2
 +\bt^{\beta-2}\norm{\theta(t)-\bt}_{H^1}^2
  +\bt^{-2}\left(
   \norm{\sqrt\rho\,\dot u(t)}_{L^2}^2
  +\norm{\sqrt\rho\,\dot\theta(t)}_{L^2}^2\right)
 \leq C_{\rm d}e^{-\lambda_{\rm d}(t-1)}.
 \label{eq:low-order-decay}
\end{align}
The first three terms in \eqref{eq:low-order-decay} satisfy the same estimate
for every $t\geq1$.  Moreover, for every $t\geq1$,
\begin{align}
 &\int_t^\infty\Bigl[
  \bt^{\alpha-1}\norm{u_x}_{L^2}^2
 +\bt^{\beta-2}\norm{\theta_x}_{L^2}^2
 +\bt^{-2}\Bigl(
   \norm{\sqrt{\rho}\,\dot u}_{L^2}^2
  +\norm{\sqrt{\rho}\,\dot\theta}_{L^2}^2\Bigr)
 \nonumber\\
 &\qquad +\bt^{\alpha-2}\norm{\dot u_x}_{L^2}^2
 +\bt^{\beta-2}\norm{\dot\theta_x}_{L^2}^2
 \Bigr]\dd s
 \leq C_{\rm d}e^{-\lambda_{\rm d}(t-1)}.
 \label{eq:decay-tail}
\end{align}
Consequently, for every $1\leq p<\infty$ there are constants $C_p,c_p>0$
with
\begin{equation}\label{eq:density-Lp-decay}
 \norm{\rho(t)-m}_{L^p}
 \leq C_p\exp\!\left(-c_p\bt^{1-\alpha}(t-1)\right)
 \qquad(t\geq1),\qquad
 c_p=\frac{c_{\rm d}}{\max\{2,p\}}.
\end{equation}
If, in addition, $\rho_0(a_0)=0$ for some $a_0\in[0,1]$, set
$c_{\rm v}:=m^2/(2C_{\rm d}^{1/2})$.  Then, for every $t\geq1$,
\begin{equation}\label{eq:gradient-growth}
 \norm{\rho(t)-m}_{L^\infty}\geq m,
 \qquad
 \norm{\rho_x(t)}_{L^2}
 \geq\max\left\{\sqrt{3}\,m,
                 c_{\rm v}e^{\lambda_{\rm d}(t-1)/2}\right\},
\end{equation}
and, for every $T>1$,
\begin{equation}\label{eq:gradient-growth-integrated}
 \int_1^T\norm{\rho_x(t)}_{L^2}^2\dd t
 \geq\max\left\{3m^2(T-1),
       \frac{c_{\rm v}^2}{\lambda_{\rm d}}
       \left(e^{\lambda_{\rm d}(T-1)}-1\right)\right\}.
\end{equation}
In particular,
$\norm{\rho(t)-m}_{H^1}\geq
c_{\rm v}e^{\lambda_{\rm d}(t-1)/2}$ for $t\geq1$.
Thus, in the genuine-vacuum case, density relaxation cannot hold in
$L^\infty$, $H^1$, or any stronger topology that controls point values.
\end{theorem}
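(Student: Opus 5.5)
The plan is to build a strict Lyapunov functional on $[1,\infty)$ from the post-layer quantities $\Acal,\Dcal$ of \eqref{eq:post-layer-A} and a scaled relative entropy centered at $m$, and then to read off the vacuum statements from the flow formula \eqref{eq:vacuum-transport-intro}.

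\textbf{Step 1 (Lyapunov functional).} Let $\Ecal(t)$ be the relative entropy centered at the conserved mean $m$ and at $\bt$; by the entropy section it is comparable to $\norm{\rho-m}_{L^2}^2+\norm{\sqrt\rho u}_{L^2}^2+\bt^{-1}\norm{\theta-\bt}_{L^2}^2$. This uses the bounds $0\le\rho\le M$ and $\theta\sim\bt$ from \eqref{eq:pointwise-main}. Its dissipation controls $\bt^{\alpha-1}\norm{u_x}^2+\bt^{\beta-2}\norm{\theta_x}^2$. The missing piece is dissipation of the density oscillation. For this I would use the effective-flux anchor $F=\mu\theta^\alpha u_x-R\rho\theta$ with $F_x=\rho\dot u$. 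Testing with $\psi=\int_0^x(\rho-m)$, which vanishes at both endpoints, gives
\[
R\bt\norm{\rho-m}_{L^2}^2\lesssim \bt^\alpha\norm{u_x}\norm{\rho-m}+\norm{\sqrt\rho\dot u}\norm{\rho-m}+\dots.
\]
Hence $\norm{\rho-m}_{L^2}^2\lesssim\bt^{2\alpha-2}\norm{u_x}^2+\bt^{-2}\norm{\sqrt\rho\dot u}^2$. Since $\psi$ involves only $\rho-m$, no lower bound on $\rho$ is needed. Next I set $\Ycal=\Acal+\gamma\bt^{a}\Ecal$, choosing the power $a$ so that $\Ycal$ is comparable to the left side of \eqref{eq:low-order-decay}. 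Then $\tfrac{d}{dt}\Ycal+c\Dcal'\le0$ with $\Dcal'\ge c\bt^{1-\alpha}\Ycal$, using Poincar\'e for $u,\theta-\bt\in H_0^1$ and the anchor bound for $\rho-m$. The rate $\bt^{1-\alpha}$ comes from comparing $\bt^{\alpha-1}\norm{u_x}^2$ against $\norm{\rho-m}^2\lesssim\bt^{2\alpha-2}\norm{u_x}^2$. Gr\"onwall then gives \eqref{eq:low-order-decay}, and integrating the dissipation from $t$ to $\infty$ gives \eqref{eq:decay-tail}.

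\textbf{Main obstacle.} The hardest part of Step 1 is checking that the cross terms from differentiating $\Acal$ can be absorbed into $\Dcal$ at the new weight with constants independent of $\bt$. The commutators also contain $\rho-m$ terms and not only $\rho_x$; the anchor estimate must absorb those without any $\norm{\rho_x}$ entering. My first attempt would be to reuse the closed post-layer inequality behind \eqref{eq:post-layer-A} and add $\gamma\bt^{a}\Ecal$ with $\gamma$ small.

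\textbf{Step 2 ($L^p$ decay and the "every $t$" claim).} For $2\le p<\infty$, interpolate $\norm{\rho-m}_{L^p}\le\norm{\rho-m}_{L^\infty}^{1-2/p}\norm{\rho-m}_{L^2}^{2/p}$ and use $\norm{\rho-m}_{L^\infty}\le M+m$. This gives the rate $c_{\rm d}/p$. For $p<2$ Hölder on $(0,1)$ gives the rate $c_{\rm d}/2$. The first three terms in \eqref{eq:low-order-decay} hold for every $t$, not just almost every $t$, because they are continuous in time (the strong-solution class gives $\rho\in C(H^2)$ and $u,\theta\in C(H^2)$).

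\textbf{Step 3 (vacuum consequences).} If $\rho_0(a_0)=0$, then \eqref{eq:vacuum-transport-intro} and Theorem~\ref{thm:main} give $\rho(x_t,t)=0$ at $x_t=X(a_0,t)$, so $\norm{\rho-m}_{L^\infty}\ge m$.
\begin{itemize}
\item \emph{Lower bound $\sqrt3\,m$.} With $f=\rho-m$ and $f(x_t)=-m$, we have $|f(x)|\ge m-|x-x_t|^{1/2}\norm{\rho_x}$. If $\norm{\rho_x}<\sqrt3 m$, then $\norm{f}_{L^1}$ is strictly positive. More simply, minimize $\norm{\rho_x}_{L^2}$ over functions with mean $m$, $\rho\ge0$ and a zero: the extremal is linear-type and gives $\norm{\rho_x}^2\ge3m^2$. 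Equivalently, $m=\int(\rho-\rho(x_t))\le\norm{\rho_x}\int|x-x_t|^{1/2}dx\le\norm{\rho_x}\cdot\tfrac{1}{\sqrt3}$ by Cauchy--Schwarz, since $\int|x-x_t|\,dx\le1/2$.
\item \emph{Exponential bound.} Gagliardo--Nirenberg for $f$ with $f(x_t)=-m$ gives $m^2\le|f(x_t)|^2\le 2\norm{f}_{L^2}\norm{f_x}_{L^2}$ up to a constant adjusted when $f$ has no zero (using $\norm f_{L^\infty}^2\le\norm f^2+2\norm f\norm{f_x}$). Combining with $\norm{f}_{L^2}\le C_{\rm d}^{1/2}e^{-\lambda_{\rm d}(t-1)/2}$ yields $\norm{\rho_x}\ge c_{\rm v}e^{\lambda_{\rm d}(t-1)/2}$. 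The constant must be tracked to land exactly on $c_{\rm v}=m^2/(2C_{\rm d}^{1/2})$.
\end{itemize}
Squaring and integrating over $[1,T]$ gives \eqref{eq:gradient-growth-integrated}. The $H^1$ lower bound is immediate from the bound on $\norm{\rho_x}$.
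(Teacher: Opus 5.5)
Your density coercivity, obtained by testing $F_x=\rho\dot u$ against $\psi(x)=\int_0^x(\rho-m)\dd y$, is a valid vacuum-compatible substitute for the paper's pointwise anchor in Lemma~\ref{lem:decay-coercivity}. That lemma uses $|F+\Pi|\le m^{1/2}\norm{\sqrt\rho\dot u}_{L^2}$ and $|\Pi-Rm\bt|\le C\norm{\theta_x}_{L^2}$, and both routes give the rate $\bt^{1-\alpha}$ for the same reason.

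\textbf{Gap 1: the entropy weight in the Lyapunov functional.} Adding the entropy ``with $\gamma$ small'' cannot work. After the absorption permitted by \eqref{eq:theta-choice}, the bookkeeping inequality \eqref{eq:closure-differential} reads $\Ycal'+c_1\Dcal_h\le C_1\Dcal_0$, and $C_1$ is \emph{not} small. The post-layer closure you propose to reuse disposes of $C_1\Dcal_0$ only after time integration against the entropy bound; that gives boundedness but no rate. The only pointwise $\Dcal_0$-dissipation available is $\Ecal'+c_2\bt\Dcal_0\le0$. So you need $\Zcal=\Ycal+k_*\bt^{-1}\Ecal$ with $k_*c_2>C_1$, i.e.\ $k_*$ large; the paper takes $k_*=2C_1/c_2$.

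The power $\bt^{-1}$ is also forced. In fact $\Ecal\simeq\bt\norm{\rho-m}_{L^2}^2+\norm{\sqrt\rho u}_{L^2}^2+\bt^{-1}\norm{\sqrt\rho(\theta-\bt)}_{L^2}^2$, and $\Ecal(1)$ can genuinely be of size $\bt$. A weight $\gamma\bt^{a}$ with $a>-1$ would let a small $\gamma$ close the inequality for large $\bt$. But then $\Zcal(1)\sim\bt^{a+1}$, and $C_{\rm d}$ is no longer $\bt$-independent.

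Your stated comparability drops this factor $\bt$ and the density weight on the thermal part. The weight matters: $\Ecal$ gives no control of $\theta-\bt$ on the vacuum set, so that control must come from $\norm{\theta_x}_{L^2}$ via Poincar\'e. Use the functional $\Ycal$ of \eqref{eq:Y} rather than $\Acal$. Its commutators are already closed in Lemma~\ref{lem:bookkeeping} without $\rho_x$ or $\rho-m$, so the obstacle you single out is not where the work lies.

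\textbf{Gap 2: passage to the vacuum limit.} These differential inequalities are only justified for smooth positive-density solutions. For the vacuum limit, even the entropy identity is only an integrated inequality. So the argument must be run on the $\delta$-approximations, uniformly in $\delta$ and with $m^\delta=m+\delta$, and then passed to the limit. The state terms converge at every time. By contrast, $\sqrt{\rho^\delta}\dot u^\delta$ and $\sqrt{\rho^\delta}\dot\theta^\delta$ converge only weak-$*$ in $L^\infty(1,T;L^2)$, which is exactly why the full estimate is claimed only for a.e.\ $t$. The tail bound follows by weak lower semicontinuity and monotone convergence.

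\textbf{Gap 3: the bound $\norm{\rho_x}_{L^2}\ge\sqrt3\,m$ in Step~3 is not established.}
\begin{itemize}
\item Cauchy--Schwarz with $\int_0^1|x-x_t|\dd x\le\frac12$ gives $\int_0^1|x-x_t|^{1/2}\dd x\le2^{-1/2}$, hence only $\sqrt2\,m$.
\item Even the exact value $\frac23\bigl(x_t^{3/2}+(1-x_t)^{3/2}\bigr)$ equals $\frac23>\frac1{\sqrt3}$ when the vacuum point sits at an endpoint, which is allowed since $a_0\in[0,1]$. Bounding $|\rho(x)-\rho(x_t)|$ pointwise first is simply too lossy.
\item The variational extremal with a zero at $x=0$ is the quadratic $\frac{3m}{2}(2x-x^2)$, not a linear function, and naming it is not a proof.
\item The ``$\norm{f}_{L^1}>0$'' remark yields no contradiction.
\end{itemize}
The fix is to apply Fubini before Cauchy--Schwarz. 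With $f=\rho-m$, $\int_0^1f\dd x=0$ and $f(x_t)=-m$,
\[
 m=\int_{x_t}^1(1-s)f_x(s)\dd s-\int_0^{x_t}s\,f_x(s)\dd s
 \le\Bigl(\frac{x_t^3+(1-x_t)^3}{3}\Bigr)^{1/2}\norm{\rho_x}_{L^2}
 \le\frac{1}{\sqrt3}\norm{\rho_x}_{L^2}.
\]

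For the exponential bound, no adjusted constant is needed. Since $f$ is continuous with zero mean, it vanishes at some $z_t$. Then $m^2=|f^2(x_t)-f^2(z_t)|\le2\norm{f}_{L^2}\norm{f_x}_{L^2}$, which gives exactly $c_{\rm v}=m^2/(2C_{\rm d}^{1/2})$. The variant $\norm{f}_{L^\infty}^2\le\norm{f}_{L^2}^2+2\norm{f}_{L^2}\norm{f_x}_{L^2}$ would not reproduce $c_{\rm v}$, and it says nothing while $\norm{f}_{L^2}\ge m$.
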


\begin{remark}[The vacuum class is nonempty]\label{rem:example}
For example, fix $m\in[m_*,m^*]$ and set
\[
 \rho_0(x)=12m\left(x-\frac12\right)^2,\qquad
 u_0=0,\qquad \theta_0=\bt.
\]
Then $\rho_0(1/2)=0$ and \eqref{eq:compatibility} holds with $g_2=0$ and
\[
 g_1(x)=-4R\bt\sqrt{3m}\,
 \operatorname{sgn}\!\left(x-\frac12\right),
 \qquad \operatorname{sgn}(0):=0.
\]
In particular, $g_1\in L^2$ and
$\bt^{-2\alpha}\norm{g_1}_{L^2}^2$ is uniformly bounded for $\bt\geq1$.
Consequently, provided
that $K_0$ is chosen sufficiently large depending only on $R$ and $m$, these
data satisfy all the hypotheses of Theorem~\ref{thm:main}.
\end{remark}

\begin{remark}[Range and sharpness of the results]\label{rem:scope}
Theorem~\ref{thm:main} permits arbitrarily large compatible data: $K_0$ is
fixed at an arbitrary value before the threshold $\Theta_0(K_0)$ is chosen.
It does not assert a single temperature threshold uniform over data of
unbounded size.  The normalization \eqref{eq:natural-data} is dictated by the
principal differential operators and is not a perturbative smallness
condition.  Substantially enlarging the exponent range would require a
different commutator closure.
Theorem~\ref{thm:relaxation} gives exponential relaxation precisely below
the density-gradient level.  If $\rho_0$ vanishes somewhere, the density
cannot converge to its positive mean in $L^\infty$ or in any topology that
controls its point values or spatial derivative.  The approximation argument
proves existence; uniqueness in the vacuum class is not addressed here.
\end{remark}

\Needspace{10\baselineskip}
\begin{table}[!htbp]
\centering
\caption{Comparison with selected one-dimensional global and large-time
results for the full compressible Navier--Stokes system.  The list
is representative rather than exhaustive.  For works in Lagrangian
coordinates, ``no vacuum'' means that the specific volume stays finite and
the density remains strictly positive.}
\label{tab:comparison}
\footnotesize
\setlength{\tabcolsep}{3.1pt}
\renewcommand{\arraystretch}{1.17}
\begin{tabularx}{\textwidth}
  {@{}L{2.05cm}L{2.55cm}L{2.45cm}L{1.05cm}Y@{}}
\toprule
Reference
& Setting and data regime
& Transport coefficients
& Vacuum
& Main conclusion and distinction \\
\midrule
\multicolumn{5}{@{}l}{\itshape
Global positive-density results with temperature-dependent coefficients}
\\[-1pt]
\addlinespace

Liu et al.~\cite{LiuYangZhaoZou2014}
& Cauchy problem; Nishida--Smoller large-data regime
& General smooth positive functions
  $\mu(\theta)$ and $\kappa(\theta)$
& No
& Unique global smooth solution when $(\gamma-1)$ times the relevant
  $H^3$ norm is small. \\

\addlinespace
Wang--Zhao~\cite{WangZhao2016}
& Cauchy problem; large $H^3$ data and $|\alpha|$ small
& $\mu=\widetilde\mu h(v)\theta^\alpha$,
  $\kappa=\widetilde\kappa h(v)\theta^\alpha$
& No
& Unique global smooth solution and uniform convergence to equilibrium;
  no exponential rate is asserted. \\

\addlinespace
Huang--Shi~\cite{HuangShi2020}
& Bounded-interval problem; large compatible data
& $\mu=\mathrm{const.}$,
  $\kappa(\theta)\sim\theta^\beta$, $\beta>0$
& No
& Global strong solution with nonlinear exponential stability in $H^1$. \\

\addlinespace
Sun--Zhang--Zhao~\cite{SunZhangZhao2021}
& Bounded-interval problem; $|\alpha|$ small
& $\mu(\theta)\sim\theta^\alpha$,
  $\kappa(\theta)\sim\theta^\beta$
& No
& Global strong solution and nonlinear exponential stability in $H^1$;
  the heat-conductivity exponent may be arbitrary. \\

\addlinespace
Dong--Tan~\cite{DongTan2025}
& Bounded-interval problem; $H^2\times H^1\times H^1$ data
& $\mu(\theta)\sim\theta^\alpha$,
  $\kappa(\theta)\sim\theta^\beta$
& No
& Global strong well-posedness and nonlinear exponential stability in
  $H^2$ for a sufficiently small viscosity exponent. \\

\addlinespace
Dong--Guo~\cite{DongGuo2025}
& Cauchy problem; large $H^2\times H^1\times H^1$ data and $|a|$ small 
& $\mu(\theta)=\widetilde\mu\theta^a$,
  $\kappa(\theta)=\widetilde\kappa\theta^b$, $a,b\geq0$
& No
& Global strong solution and asymptotic stability; the two pure temperature
  powers may be different, and no uniform exponential rate is claimed. \\

\midrule
\multicolumn{5}{@{}l}{\itshape
Fixed-domain results allowing initial vacuum}
\\[-1pt]
\addlinespace

Wen--Zhu~\cite{WenZhu2013}
& Bounded-domain problem; large data
& Constant viscosity; temperature-dependent heat conductivity
& Yes
& Unique global classical solution, but the viscosity is independent of
  temperature. \\

\addlinespace
Li~\cite{Li2019}
& Bounded-domain problem; large $H^2$ data
& Constant viscosity and heat conductivity
& Yes
& Unique global strong solution with nonnegative density; neither
  transport coefficient depends on temperature. \\

\addlinespace
\textbf{Present work}
& Bounded-domain problem; large $H^2$ data at high temperature
& $\mu(\theta)=\mu\theta^\alpha$,
  $\kappa(\theta)=\kappa\theta^\beta$,
  $\alpha>2$, $\beta>2\alpha+4$
& \textbf{Yes}
& Global strong existence allowing genuine interior vacuum.  Density decays in
  every finite $L^p$, while $u$ and $\theta-\bt$ decay in $H^1$; a transported
  vacuum point forces exponential growth of $\norm{\rho_x}_{L^2}$. \\
\bottomrule
\end{tabularx}
\end{table}
\FloatBarrier

\begin{remark}[Analytical difficulties and proof mechanisms]
\label{rem:proof-mechanisms}
Table~\ref{tab:comparison} focuses on the one-dimensional global theory.
The multidimensional local-vacuum results and the vacuum free-boundary
results discussed in the introduction are not repeated there, because their
geometry and solution classes are different.  The comparison is intended to
record structural distinctions, not to claim optimality of the exponent
range.  Within the precise regime of
Theorems~\ref{thm:main}--\ref{thm:relaxation}, the main analytical
difficulties and the mechanisms used here to resolve them are as follows.
\begin{enumerate}[label=\textup{(\roman*)},leftmargin=2.2em,itemsep=5pt]
\item \emph{Coercivity compatible with vacuum.}
The density may vanish, so neither the momentum equation nor the temperature
equation can be divided by $\rho$.  The relative entropy
\eqref{eq:relative-energy}, centered at the conserved mean $m$, remains
finite at vacuum.  Its thermal part is density weighted and therefore does
not control the temperature on the vacuum set; the missing unweighted
control is recovered from the fixed boundary temperature and the Dirichlet
Poincar\'e inequality.  Hence the basic energy estimate requires no positive
lower density bound.

\item \emph{Pointwise bounds without division by the density.}
For the physical effective viscous flux
\[
 F=\mu\theta^\alpha u_x-R\rho\theta,
 \qquad F_x=\rho\dot u,
\]
the quantity $\Pi$ in \eqref{eq:Pi} fixes the spatial constant through a
$\theta^{-\alpha}$-weighted mean.  This yields the anchor
\eqref{eq:F-anchor} in terms of $\sqrt\rho\,\dot u$, without invoking
$\rho^{-1}$.  A Zlotnik argument for $\log\rho$ along positive-density
particle paths then gives the density upper bound uniformly in the
approximation parameter.  The heat-flux identity \eqref{eq:heat-flux}
supplies the derivative control needed in the thermal commutators, while the
temperature bootstrap is improved using the scaled $L^2$ estimate of
$\theta_x$ and the fixed boundary trace.

\item \emph{Initial-layer smoothing and post-layer closure.}
The time scale $\tau_{\bt}=\bt^{1-\beta}$ and the shifted weight
$\sigma_{\bt}(t)=\min\{1,t+\tau_{\bt}\}$ capture the rapid thermal initial
layer without requiring $u_0$ or $\theta_0-\bt$ to shrink as
$\bt\to\infty$.  Multiplying the material momentum identity by $\bt$ before
adding the material temperature identity cancels the leading mixed term at
equilibrium.  The resulting weighted estimate places the solution at the
normalized post-layer scale at $t=1$.  From that time onward, the hierarchy
\eqref{eq:A}--\eqref{eq:D} and Lemma~\ref{lem:bookkeeping} close the
variable-coefficient commutators under
$\alpha>2$ and $\beta>2\alpha+4$.  This is a sufficient exponent range, not
a claim of sharpness.

\item \emph{Passage from positive density to genuine vacuum.}
The local problem is solved for $\rho_0^\delta=\rho_0+\delta$.  The modified
compatibility residuals satisfy the uniform naturally scaled bounds
\eqref{eq:gdelta}--\eqref{eq:natural-data-delta}.  The two-stage estimates are
independent of $\inf\rho_0^\delta$, while the density derivatives are
propagated only on finite time intervals by the transport estimates of
Section~\ref{sec:higher}.  Compactness then produces a strong vacuum-limit
solution, and the exact flow identity \eqref{eq:flow-formula} transports the
initial zero set.  No smallness or measure condition is imposed on that set,
although uniqueness of the vacuum-limit solution is not asserted.

\item \emph{Relaxation below the density-gradient level.}
The basic entropy contains no direct density dissipation.
Lemma~\ref{lem:decay-coercivity} recovers the missing $L^2$ control of
$\rho-m$ from the effective flux without using $\rho_x$.  Adding a scaled
relative entropy to the differentiated energy gives the strict Lyapunov
inequality in Proposition~\ref{prop:strict-lyapunov} and the rate
$\lambda_{\rm d}=c_{\rm d}\bt^{1-\alpha}$.  This proves finite-$L^p$
density relaxation, $H^1$ relaxation of $u$ and $\theta-\bt$, and weighted
$L^2$ decay of the material derivatives.  When a vacuum point is present,
the transported zero rules out density convergence in $L^\infty$ and $H^1$;
together with the $L^2$ decay it yields the quantitative vacuum-layer
sharpening in \eqref{eq:gradient-growth}--
\eqref{eq:gradient-growth-integrated}.
\end{enumerate}
Thus the contribution is the conjunction of global vacuum-compatible
large-data existence, an initial-layer estimate uniform in the density lower
bound, and low-order exponential relaxation after that layer.  The exponent
range is not claimed optimal, and uniqueness in the vacuum class remains
open.
\end{remark}

\section{Relative entropy and vacuum-compatible pointwise estimates}
\label{sec:entropy}

Henceforth, the parameters and initial data are fixed as in
Theorem~\ref{thm:main}, and $m$ denotes the conserved total mass defined in
\eqref{eq:data-density}.

Set
\begin{equation}\label{eq:relative-functions}
 h(\rho\mid m)=\rho\log\frac{\rho}{m}-\rho+m,
 \qquad
 \Psi(\theta\mid\bt)=\theta-\bt-\bt\log\frac{\theta}{\bt},
\end{equation}
with $0\log0=0$, and define
\begin{equation}\label{eq:relative-energy}
 \Ecal(t)=\int_0^1\left(
 \frac12\rho u^2+R\bt h(\rho\mid m)+\rho\Psi(\theta\mid\bt)
 \right)\dd x.
\end{equation}

\begin{lemma}[Relative entropy identity]\label{lem:entropy}
Every positive smooth solution of \eqref{eq:NSF}--\eqref{eq:boundary}
satisfies
\begin{align}\label{eq:entropy-identity}
 \frac{\dd}{\dd t}\Ecal(t)
 &+\int_0^1\left(
 \mu\bt\theta^{\alpha-1}u_x^2
 +\kappa\bt\theta^{\beta-2}\theta_x^2
 \right)\dd x=0.
\end{align}
The same identity, with equality replaced by the corresponding integrated
inequality, holds for the vacuum limit constructed below.
\end{lemma}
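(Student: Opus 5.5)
The plan is a direct energy computation on the three pieces of $\Ecal$, for a positive smooth solution. I will use the transport identity $\frac{\dd}{\dd t}\int_0^1\rho f\dd x=\int_0^1\rho\,\Dt f\dd x$, which follows from the continuity equation and $u|_{x=0,1}=0$. I will also use conservation of mass, $\frac{\dd}{\dd t}\int_0^1\rho\dd x=0$.

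\emph{Kinetic part.} Multiplying the momentum equation by $u$ and integrating by parts (the boundary terms vanish because $u=0$ at $x=0,1$) gives
\[
\frac{\dd}{\dd t}\int_0^1\tfrac12\rho u^2\dd x=-\int_0^1\mu\theta^\alpha u_x^2\dd x+\int_0^1R\rho\theta u_x\dd x .
\]

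\emph{Density part.} Since $\partial_\rho h(\rho\mid m)=\log(\rho/m)$, the continuity equation gives
\[
\frac{\dd}{\dd t}\int_0^1 h\dd x=-\int_0^1\log(\rho/m)(\rho u)_x\dd x.
\]
One integration by parts, with $u=0$ on the boundary, turns this into $\int_0^1 u\rho_x\dd x=-\int_0^1\rho u_x\dd x$. The affine part $-\rho+m$ of $h$ contributes nothing, by conservation of mass. Hence the density part contributes $-R\bt\int_0^1\rho u_x\dd x$.

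\emph{Thermal part.} Here $\Dt\Psi=(1-\bt/\theta)\dot\theta$. Inserting the energy equation gives
\[
\int_0^1\Bigl(1-\frac{\bt}{\theta}\Bigr)\bigl[-R\rho\theta u_x+\mu\theta^\alpha u_x^2+(\kappa\theta^\beta\theta_x)_x\bigr]\dd x .
\]
The one point requiring care is the heat-flux term. I will not split it, since $\int_0^1(\kappa\theta^\beta\theta_x)_x\dd x$ is a boundary flux that need not vanish. Instead I integrate by parts with the full factor $(1-\bt/\theta)$. The boundary term vanishes exactly because $\theta=\bt$ at $x=0,1$; this is where the Dirichlet condition \eqref{eq:boundary} enters. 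What remains is
\[
-\int_0^1\kappa\theta^\beta\theta_x\cdot\frac{\bt\,\theta_x}{\theta^2}\dd x=-\kappa\bt\int_0^1\theta^{\beta-2}\theta_x^2\dd x.
\]
The other terms expand to
\[
-\int_0^1 R\rho\theta u_x\dd x+\int_0^1\mu\theta^\alpha u_x^2\dd x+R\bt\int_0^1\rho u_x\dd x-\bt\int_0^1\mu\theta^{\alpha-1}u_x^2\dd x .
\]

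\emph{Summation.} Adding the three contributions, the pressure-work terms $\pm\int_0^1 R\rho\theta u_x\dd x$ cancel. The viscous terms $\pm\int_0^1\mu\theta^\alpha u_x^2\dd x$ cancel. The terms $\pm R\bt\int_0^1\rho u_x\dd x$ cancel. What is left is exactly the dissipation in \eqref{eq:entropy-identity}, which proves the identity for positive smooth solutions.

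\emph{Vacuum limit.} I expect the main obstacle to be this statement, which is a limiting argument rather than a computation. The approximate solutions $(\rho^\delta,u^\delta,\theta^\delta)$ of Section~\ref{sec:construction} satisfy the integrated identity on $[0,t]$. The initial energies $\Ecal^\delta(0)$ converge to $\Ecal(0)$, because $\rho_0^\delta=\rho_0+\delta\to\rho_0$ uniformly and $h$ is continuous on $[0,\infty)$ with $0\log0=0$. Using the strong compactness obtained there (uniform convergence of $\rho^\delta$ and of $\theta^\delta$, and strong convergence of $u^\delta$), I pass to the limit in $\Ecal^\delta(t)$. For the dissipation I use weak $L^2$ convergence of $u^\delta_x$ and $\theta^\delta_x$ together with the uniform convergence of the weights $(\theta^\delta)^{\alpha-1}$ and $(\theta^\delta)^{\beta-2}$, which are bounded away from $0$ by \eqref{eq:pointwise-main}. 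Weak lower semicontinuity of these convex quadratic forms then yields the integrated inequality $\Ecal(t)+\int_0^t(\cdots)\le\Ecal(0)$.
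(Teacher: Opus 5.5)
Your proof is correct and is the paper's argument with different bookkeeping: your multipliers $1-\bt/\theta$ for the temperature equation and $R\bt\log(\rho/m)$ for the continuity equation amount exactly to the paper's subtraction of $\bt$ times the entropy balance from the total-energy balance, with the same boundary cancellation $(1-\bt/\theta)\kappa\theta^\beta\theta_x=\kappa\theta^\beta\theta_x-\bt\kappa\theta^{\beta-1}\theta_x=0$ where $\theta=\bt$, and differentiating the relative functionals directly simply spares you the paper's check that $\Ecal-\widetilde{\Ecal}$ is constant in time. Your vacuum-limit step also matches the paper's, with two small points: the $\delta$-problems are centered at $m^\delta=m+\delta$, so the convergence of $\Ecal^\delta$ also uses $m^\delta\to m$, and the same strong convergence gives the inequality on every interval $[s,t]$, as the paper states it, not only on $[0,t]$.
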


\begin{proof}
The continuity equation implies
\[
 \frac{\dd}{\dd t}\int_0^1\frac12\rho u^2\dd x
 =\int_0^1\rho\dot u\,u\dd x.
\]
Testing the momentum equation by $u$, using $u=0$ at the endpoints, and
integrating the temperature equation give, respectively,
\begin{align*}
& \frac{\dd}{\dd t}\int_0^1\frac12\rho u^2\dd x
 +\int_0^1\mu\theta^\alpha u_x^2\dd x
 =\int_0^1R\rho\theta u_x\dd x,\\
 &\frac{\dd}{\dd t}\int_0^1\rho\theta\dd x
 +\int_0^1R\rho\theta u_x\dd x
 =\int_0^1\mu\theta^\alpha u_x^2\dd x
   +\left[\kappa\theta^\beta\theta_x\right]_{0}^{1}.
\end{align*}
Their sum is the total-energy balance
\begin{equation}\label{eq:total-energy-balance}
 \frac{\dd}{\dd t}\int_0^1
 \left(\frac12\rho u^2+\rho\theta\right)\dd x
 =\left[\kappa\theta^\beta\theta_x\right]_{0}^{1}.
\end{equation}
On the other hand, division of the temperature equation by $\theta$ and
use of $\Dt\rho=-\rho u_x$ yield the local entropy balance
\begin{equation}\label{eq:local-entropy-balance}
 (\rho\log\theta-R\rho\log\rho)_t
 +(\rho u\log\theta-R\rho u\log\rho)_x
 =\mu\theta^{\alpha-1}u_x^2
  +\kappa\theta^{\beta-2}\theta_x^2
  +(\kappa\theta^{\beta-1}\theta_x)_x.
\end{equation}
Integrate \eqref{eq:local-entropy-balance}, multiply it by $\bt$, and
subtract it from \eqref{eq:total-energy-balance}.  The boundary fluxes
cancel exactly because $\theta=\bt$ on $\partial\I$:
\[
 \kappa\theta^\beta\theta_x
 -\bt\kappa\theta^{\beta-1}\theta_x=0.
\]
The functional obtained by this subtraction is
\begin{equation*}
 \widetilde{\Ecal}(t)=\int_0^1\left[
 \frac12\rho u^2+\rho\theta-\bt\rho\log\theta
 +R\bt\rho\log\rho\right]\dd x.
\end{equation*}
Expanding the two relative functions in \eqref{eq:relative-energy} gives
\begin{align*}
 \Ecal(t)-\widetilde{\Ecal}(t)
 =\int_0^1\bigl[&-R\bt\rho\log m-R\bt\rho+R\bt m
 -\bt\rho+\bt\rho\log\bt\bigr]\dd x.
\end{align*}
Since $\int_0^1\rho\dd x=m$ and $|\I|=1$, the right-hand side equals
\[
 -R\bt m\log m-\bt m+\bt m\log\bt,
\]
which is constant in time.  Therefore $\Ecal'=\widetilde{\Ecal}'$, and the
subtracted balance is exactly \eqref{eq:entropy-identity}.  This calculation
also shows explicitly why centering at the conserved mean $m$ changes only
the normalization and places the minimum at the actual equilibrium density.

For the vacuum limit, integrate the positive-density identity between
$s$ and $t$.  The strong convergence in $C([0,T];H^1)$ established in
Section~\ref{sec:construction} passes every term of $\Ecal^\delta(s)$ and
$\Ecal^\delta(t)$ to the limit.  The temperature bounds make the diffusion
coefficients converge uniformly, while weak lower semicontinuity gives
\begin{align*}
 \int_s^t\!\int_0^1\left(
 \mu\bt\theta^{\alpha-1}u_x^2
 +\kappa\bt\theta^{\beta-2}\theta_x^2\right)\dd x\dd r
\leq\liminf_{\delta\to0}
 \int_s^t\!\int_0^1\left(
 \mu\bt(\theta^\delta)^{\alpha-1}|u_x^\delta|^2
 +\kappa\bt(\theta^\delta)^{\beta-2}|\theta_x^\delta|^2\right)\dd x\dd r.
\end{align*}
Thus the integrated identity passes to the asserted integrated inequality
without using a positive lower bound for the density.
\end{proof}

\subsection{Coercivity and pointwise anchors}

Under a bootstrap bound $0\leq\rho\leq M$ and
$\bt/4\leq\theta\leq2\bt$, convexity gives, for some
$c_{\rm ent}=c_{\rm ent}(M,m_*,m^*,R)>0$,
\begin{equation}\label{eq:entropy-coercivity}
 c_{\rm ent}\left[
 \bt\norm{\rho-m}_{L^2}^2+\norm{\sqrt\rho u}_{L^2}^2\right]
 \leq\Ecal(t)
\end{equation}
and
$\Ecal(0)\leq C_{\rm in}\bt$, where
$C_{\rm in}=C_{\rm in}(K_0,m_*,m^*,R)$.  These constants are independent
of $t$, $\bt$, and a positive lower bound for $\rho$.  We do not place an
unweighted thermal $L^2$ term on the left: vacuum entropy cannot control
temperature where $\rho=0$.  Such a term, when needed, follows instead from
$\theta-\bt\in H_0^1$ and $\norm{\theta-\bt}_{L^2}\leq C\norm{\theta_x}_{L^2}$.

The effective flux used below is the physical one,
\begin{equation}\label{eq:flux}
 F=\mu\theta^\alpha u_x-R\rho\theta,
 \qquad F_x=\rho\dot u.
\end{equation}
Define its time-dependent anchor
\begin{equation}\label{eq:Pi}
 \Pi(t)=R\frac{\displaystyle\int_0^1\rho\theta^{1-\alpha}\dd x}
                 {\displaystyle\int_0^1\theta^{-\alpha}\dd x}.
\end{equation}

\begin{lemma}[Flux anchor]\label{lem:flux-anchor}
Suppose $0\leq\rho\leq M$ and
$\bt/4\leq\theta\leq2\bt$.  Then
\begin{align}
 \norm{F+\Pi}_{L^\infty}
 \leq m^{1/2}\norm{\sqrt\rho\dot u}_{L^2},
 \label{eq:F-anchor}\\
 \norm{u_x}_{L^\infty}
 \leq C_{\rm F}\bt^{1-\alpha}
   +C_{\rm F}\bt^{-\alpha}\norm{\sqrt\rho\dot u}_{L^2}.
 \label{eq:ux-anchor}
\end{align}
Here $C_{\rm F}=C_{\rm F}(M,R,\mu,\alpha)>0$ can be chosen
nondecreasing in $M$ and is independent of $t$, $\bt$, and
$\inf\rho$.  In particular, \eqref{eq:F-anchor} itself has no dependence
on $M$ and neither estimate uses a lower bound for $\rho$.
\end{lemma}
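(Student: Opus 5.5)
The plan is to prove \eqref{eq:F-anchor} from the identity $F_x=\rho\dot u$, with the integration constant fixed by the definition of $\Pi$. Then \eqref{eq:ux-anchor} follows by solving $F=\mu\theta^\alpha u_x-R\rho\theta$ for $u_x$.

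\emph{Step 1: a zero of $F+\Pi$.} Set $G:=F+\Pi$. Since $u(0,t)=u(1,t)=0$, we have $\int_0^1u_x\dd x=0$. Dividing the definition of $F$ by $\mu\theta^\alpha$ gives
\[
 u_x=\frac{G-\Pi+R\rho\theta}{\mu\theta^\alpha}.
\]
Integrating over $(0,1)$ yields
\[
 \int_0^1 G\theta^{-\alpha}\dd x
 =\Pi\int_0^1\theta^{-\alpha}\dd x-R\int_0^1\rho\theta^{1-\alpha}\dd x=0
\]
by \eqref{eq:Pi}. The function $\theta^{-\alpha}$ is strictly positive and $G$ is continuous in $x$ (it belongs to $H^1$, since $G_x=\rho\dot u\in L^1$). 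Hence $G$ vanishes at some point $x_0\in[0,1]$. This is where the $\theta^{-\alpha}$-weighted mean enters, and no lower bound on $\rho$ is used.

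\emph{Step 2: the $L^\infty$ bound.} For any $x$,
\[
 |G(x)|=\Bigl|\int_{x_0}^x\rho\dot u\dd y\Bigr|
 \leq\int_0^1\sqrt\rho\cdot\sqrt\rho|\dot u|\dd y
 \leq\Bigl(\int_0^1\rho\dd y\Bigr)^{1/2}\norm{\sqrt\rho\dot u}_{L^2}
 = m^{1/2}\norm{\sqrt\rho\dot u}_{L^2},
\]
using Cauchy--Schwarz and conservation of mass. This is \eqref{eq:F-anchor}, and the constant does not depend on $M$.

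\emph{Step 3: the velocity gradient.} From $\mu\theta^\alpha u_x=G-\Pi+R\rho\theta$ we get
\[
 |u_x|\leq\mu^{-1}\theta^{-\alpha}\bigl(|G|+|\Pi|+R\rho\theta\bigr).
\]
The bounds $\bt/4\leq\theta\leq2\bt$ and $0\le\rho\le M$ give
\[
 \theta^{-\alpha}\leq4^\alpha\bt^{-\alpha},\qquad
 R\rho\theta\leq2RM\bt,\qquad
 0\leq\Pi\leq R\,\frac{M\max\theta^{1-\alpha}}{\min\theta^{-\alpha}}\leq RM\,C_\alpha\bt,
\]
where $C_\alpha$ depends only on $\alpha$ through the temperature ratio. (Alternatively, $\Pi\le R\sup\rho\theta\le 2RM\bt$, since $\Pi$ is a weighted average of $R\rho\theta$ with weight $\theta^{-\alpha}$.) Combining these with Step 2 and $m^{1/2}\le (m^*)^{1/2}\le M^{1/2}$, which holds because $m\le\sup\rho$, gives
\[
 \norm{u_x}_{L^\infty}\leq C_{\rm F}\bt^{1-\alpha}+C_{\rm F}\bt^{-\alpha}\norm{\sqrt\rho\dot u}_{L^2},
\]
with $C_{\rm F}=\mu^{-1}4^\alpha\max\{4RM,M^{1/2}\}$. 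This constant is nondecreasing in $M$ and independent of $t$, $\bt$ and $\inf\rho$.

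The argument is essentially elementary. The only point needing care is Step 1, namely that $\Pi$ is chosen exactly so that the weighted mean of $G$ vanishes. This works because $\theta^{-\alpha}$ never vanishes, in contrast to the density weight, which may.
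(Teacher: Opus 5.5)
Your proof is correct and follows the paper's route. The definition of $\Pi$ makes the $\theta^{-\alpha}$-weighted mean of $F+\Pi$ vanish, so $F+\Pi$ has a zero and is bounded by $\norm{\rho\dot u}_{L^1}\le m^{1/2}\norm{\sqrt\rho\,\dot u}_{L^2}$. Then $u_x$ is solved from $F=\mu\theta^\alpha u_x-R\rho\theta$ using the pointwise bounds on $\rho$ and $\theta$.

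One small slip: the intermediate inequality $(m^*)^{1/2}\le M^{1/2}$ is not needed and not justified in general. The inequality you actually use, $m\le M$, follows directly from $m=\int_0^1\rho\dd x\le\sup\rho\le M$, as you observe.
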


\begin{proof}
Since $\int_0^1u_x\dd x=0$, definitions
\eqref{eq:flux}--\eqref{eq:Pi} give
\[
 \int_0^1\theta^{-\alpha}(F+\Pi)\dd x=0.
\]
Hence $F(\xi,t)+\Pi(t)=0$ at some $\xi\in[0,1]$.  Therefore
\[
 |F(x,t)+\Pi(t)|\leq\norm{F_x}_{L^1}
 \leq m^{1/2}\norm{\sqrt\rho\dot u}_{L^2}.
\]
The temperature comparison also yields
\[
 \int_0^1\rho\theta^{1-\alpha}\dd x
 \leq C_\alpha m\bt^{1-\alpha},\qquad
 \int_0^1\theta^{-\alpha}\dd x
 \geq c_\alpha\bt^{-\alpha}.
\]
Consequently,
\[
 |\Pi|\leq C_{\alpha,R}m\bt.
\]
Solving \eqref{eq:flux} for $u_x$ and adding and subtracting $\Pi$ give
\[
 u_x=\frac{\theta^{-\alpha}}{\mu}
       \bigl((F+\Pi)-\Pi+R\rho\theta\bigr).
\]
Since $\theta^{-\alpha}\leq4^\alpha\bt^{-\alpha}$,
$\rho\theta\leq2M\bt$, and $m=\int_0^1\rho\dd x\leq M$, we obtain
\begin{align*}
 \norm{u_x}_{L^\infty}
 &\leq\frac{4^\alpha}{\mu}\bt^{-\alpha}
 \left(\norm{F+\Pi}_{L^\infty}+|\Pi|
       +R\norm{\rho\theta}_{L^\infty}\right)\\
 &\leq C_{\alpha,\mu}\bt^{-\alpha}
 \left(m^{1/2}\norm{\sqrt\rho\dot u}_{L^2}
       +(C_{\alpha,R}m+2RM)\bt\right)\\
 &\leq C_{\rm F}\bt^{1-\alpha}
       +C_{\rm F}\bt^{-\alpha}
          \norm{\sqrt\rho\dot u}_{L^2},
\end{align*}
which is \eqref{eq:ux-anchor} with the stated dependence of $C_{\rm F}$.
\end{proof}

For the temperature put
\begin{equation}\label{eq:heat-flux}
 G=\kappa\theta^\beta\theta_x,\qquad
 G_x=\rho\dot\theta+R\rho\theta u_x
             -\mu\theta^\alpha u_x^2.
\end{equation}

\begin{lemma}[Thermal anchor]\label{lem:thermal-anchor}
Under the hypotheses of Lemma~\ref{lem:flux-anchor},
\begin{align}
 \norm{\theta_x}_{L^\infty}
 &\leq C_{\rm G}\bt^{-\beta}\left(
  \norm{\sqrt\rho\dot\theta}_{L^2}+\bt\norm{u_x}_{L^2}
  +\bt^\alpha\norm{u_x}_{L^2}^2\right),
 \label{eq:thetax-infty}\\
 \norm{\theta_{xx}}_{L^2}
 &\leq C_{\rm G}\bt^{-\beta}\left(
  \norm{\sqrt\rho\dot\theta}_{L^2}+\bt\norm{u_x}_{L^2}
  +\bt^\alpha\norm{u_x}_{L^\infty}\norm{u_x}_{L^2}\right)
  +C_{\rm G}\bt^{-1}\norm{\theta_x}_{L^\infty}
                         \norm{\theta_x}_{L^2}.
 \label{eq:thetaxx}
\end{align}
Here
$C_{\rm G}=C_{\rm G}(M,R,\mu,\kappa,\alpha,\beta)>0$ can be chosen
nondecreasing in $M$ and is independent of $t$, $\bt$, and $\inf\rho$.
\end{lemma}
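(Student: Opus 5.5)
The plan is to mirror the flux-anchor argument of Lemma~\ref{lem:flux-anchor}, now applied to the heat flux $G=\kappa\theta^\beta\theta_x$ from \eqref{eq:heat-flux}. The anchor here comes from the Dirichlet condition. Since $\theta-\bt\in H_0^1$, we have $\int_0^1\theta_x\dd x=0$. Because $\theta^\beta>0$, the function $\theta_x=\kappa^{-1}\theta^{-\beta}G$ must change sign or vanish identically, so $G(\xi,t)=0$ for some $\xi\in[0,1]$. Hence
\[
\norm{G}_{L^\infty}\leq\norm{G_x}_{L^1}.
\]
By \eqref{eq:heat-flux} and the bounds $0\le\rho\le M$, $\bt/4\le\theta\le2\bt$, Cauchy--Schwarz with $\int\rho=m$ gives
\[
\norm{G_x}_{L^1}\leq m^{1/2}\norm{\sqrt\rho\dot\theta}_{L^2}+2RM\bt\norm{u_x}_{L^1}+\mu 2^\alpha\bt^\alpha\norm{u_x}_{L^2}^2 .
\]
Using $\norm{u_x}_{L^1}\le\norm{u_x}_{L^2}$ and $\theta^{-\beta}\le4^\beta\bt^{-\beta}$ in $\theta_x=\kappa^{-1}\theta^{-\beta}G$ then yields \eqref{eq:thetax-infty}. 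The constant $C_{\rm G}$ depends on $M,R,\mu,\kappa,\alpha,\beta$ and grows with $M$, since $m\le M$. No lower bound on $\rho$ enters, because only $\sqrt\rho$-weighted quantities are used.

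For \eqref{eq:thetaxx}, I differentiate $G=\kappa\theta^\beta\theta_x$ to get
\[
\kappa\theta^\beta\theta_{xx}=G_x-\kappa\beta\theta^{\beta-1}\theta_x^2,
\qquad\text{so}\qquad
\theta_{xx}=\kappa^{-1}\theta^{-\beta}G_x-\beta\theta^{-1}\theta_x^2 .
\]
For the first term, I take the $L^2$ norm of $G_x$ termwise:
\begin{itemize}
\item $\norm{\rho\dot\theta}_{L^2}\le M^{1/2}\norm{\sqrt\rho\dot\theta}_{L^2}$;
\item $\norm{R\rho\theta u_x}_{L^2}\le 2RM\bt\norm{u_x}_{L^2}$;
\item $\norm{\mu\theta^\alpha u_x^2}_{L^2}\le\mu2^\alpha\bt^\alpha\norm{u_x}_{L^\infty}\norm{u_x}_{L^2}$.
\end{itemize}
Multiplying by $\kappa^{-1}4^\beta\bt^{-\beta}$ gives the first bracket. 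For the second term, $\beta\theta^{-1}\le4\beta\bt^{-1}$ and $\norm{\theta_x^2}_{L^2}\le\norm{\theta_x}_{L^\infty}\norm{\theta_x}_{L^2}$, which is the last term in \eqref{eq:thetaxx}.

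The only real point is the zero of $G$, and it needs the regularity to be adequate. For the positive-density smooth approximations, $\theta\in H^2$ makes $G$ continuous, so the argument is immediate. For the vacuum limit, the same estimates pass to the limit by lower semicontinuity, or hold directly because the strong-solution class has $\theta\in H^2$. I expect no genuine obstacle beyond tracking that every constant depends only on $M$ and the listed parameters, monotonically in $M$, and not on $t$, $\bt$ or $\inf\rho$.
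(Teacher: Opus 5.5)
Your proposal is correct and follows essentially the same route as the paper. The paper likewise anchors the heat flux $G=\kappa\theta^\beta\theta_x$ at a zero forced by $\int_0^1\theta_x\,dx=0$, bounds $G_x$ termwise in $L^1$ and $L^2$ via the temperature equation, and recovers $\theta_{xx}$ from the identity $\theta_{xx}=\kappa^{-1}\theta^{-\beta}G_x-\beta\theta^{-1}\theta_x^2$.
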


\begin{proof}
Because $\theta(0,t)=\theta(1,t)$,
$\int_0^1G\theta^{-\beta}\dd x=\kappa\int_0^1\theta_x\dd x=0$.
Thus $G$ has a zero and $\norm{G}_{L^\infty}\leq\norm{G_x}_{L^1}$.
To estimate the latter norm, equation \eqref{eq:heat-flux},
Cauchy--Schwarz, mass conservation, the density upper bound, and
$|\I|=1$ give
\begin{align*}
 \norm{G_x}_{L^1}
 &\leq\norm{\rho\dot\theta}_{L^1}
      +R\norm{\rho\theta u_x}_{L^1}
      +\mu\norm{\theta^\alpha u_x^2}_{L^1}\\
 &\leq m^{1/2}\norm{\sqrt\rho\dot\theta}_{L^2}
      +2RM\bt\norm{u_x}_{L^1}
      +2^\alpha\mu\bt^\alpha\norm{u_x}_{L^2}^2\\
 &\leq m^{1/2}\norm{\sqrt\rho\dot\theta}_{L^2}
      +2RM\bt\norm{u_x}_{L^2}
      +2^\alpha\mu\bt^\alpha\norm{u_x}_{L^2}^2.
\end{align*}
Because $G=\kappa\theta^\beta\theta_x$ and
$\theta^{-\beta}\leq4^\beta\bt^{-\beta}$,
\[
 \norm{\theta_x}_{L^\infty}
 \leq\frac{4^\beta}{\kappa}\bt^{-\beta}
       \norm{G}_{L^\infty}
 \leq\frac{4^\beta}{\kappa}\bt^{-\beta}
       \norm{G_x}_{L^1}.
\]
Using $m\leq M$ proves \eqref{eq:thetax-infty} with the stated constant.

For the second derivative, the three terms in
\eqref{eq:heat-flux} satisfy
\begin{align*}
& \norm{\rho\dot\theta}_{L^2}^2
 =\int_0^1\rho^2|\dot\theta|^2\dd x
 \leq M\norm{\sqrt\rho\dot\theta}_{L^2}^2,\\
 &R\norm{\rho\theta u_x}_{L^2}
 \leq2RM\bt\norm{u_x}_{L^2},\\
& \mu\norm{\theta^\alpha u_x^2}_{L^2}
 \leq2^\alpha\mu\bt^\alpha
       \norm{u_x^2}_{L^2}
 \leq2^\alpha\mu\bt^\alpha
       \norm{u_x}_{L^\infty}\norm{u_x}_{L^2}.
\end{align*}
Consequently,
\[
 \norm{G_x}_{L^2}
 \leq M^{1/2}\norm{\sqrt\rho\dot\theta}_{L^2}
 +2RM\bt\norm{u_x}_{L^2}
 +2^\alpha\mu\bt^\alpha
       \norm{u_x}_{L^\infty}\norm{u_x}_{L^2}.
\]
Finally, expanding $G_x$ gives the pointwise identity
\[
 \theta_{xx}=\frac{\theta^{-\beta}}{\kappa}G_x
              -\beta\theta^{-1}\theta_x^2.
\]
The bootstrap lower bound for $\theta$ therefore implies
\begin{align*}
 \norm{\theta_{xx}}_{L^2}
 &\leq\frac{4^\beta}{\kappa}\bt^{-\beta}
          \norm{G_x}_{L^2}
      +4|\beta|\bt^{-1}\norm{\theta_x^2}_{L^2}\leq\frac{4^\beta}{\kappa}\bt^{-\beta}
          \norm{G_x}_{L^2}
      +4|\beta|\bt^{-1}\norm{\theta_x}_{L^\infty}
                              \norm{\theta_x}_{L^2}.
\end{align*}
Substitution of the preceding $L^2$ bound for $G_x$ proves
\eqref{eq:thetaxx} and displays explicitly every dependence entering
$C_{\rm G}$.
\end{proof}

We shall also use the following elementary form of Zlotnik's lemma.

\begin{lemma}[Zlotnik]\label{lem:zlotnik}
Let $y$ and $b$ be absolutely continuous and satisfy
$y'\leq g(y)+b'$ on $[0,T]$, where $g(s)\to-\infty$ as $s\to\infty$.
If
\[
 b(t_2)-b(t_1)\leq N_0+N_1(t_2-t_1)
 \quad(0\leq t_1<t_2\leq T)
\]
and $g(s)\leq-N_1$ for $s\geq\zeta$, then
$y(t)\leq\max\{y(0),\zeta\}+N_0$.
\end{lemma}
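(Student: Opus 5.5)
The plan is to reduce to the case $y(0)$ and $\zeta$ both below the claimed bound and argue by a last-entry-time argument. Set $Y:=\max\{y(0),\zeta\}$ and suppose, for contradiction, that $y(t^*)>Y+N_0$ for some $t^*\in(0,T]$. Since $y$ is continuous and $y(0)\le Y$, the set $\{t\in[0,t^*]: y(t)\le \zeta\}$ may be empty; I will therefore split into two cases according to whether $y$ ever drops to the level $\zeta$ before $t^*$.

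In the first case, let $t_1:=\sup\{t\in[0,t^*]: y(t)\le\zeta\}$. By continuity $y(t_1)=\zeta$ if $t_1>0$, or $y(t_1)=y(0)\le Y$ in any event. On $(t_1,t^*]$ one has $y>\zeta$, so $g(y)\le -N_1$ there. Integrating the differential inequality over $[t_1,t^*]$ (legitimate since $y$ and $b$ are absolutely continuous) gives
\[
 y(t^*)-y(t_1)\le\int_{t_1}^{t^*}g(y(s))\dd s+b(t^*)-b(t_1)
 \le -N_1(t^*-t_1)+N_0+N_1(t^*-t_1)=N_0,
\]
so $y(t^*)\le y(t_1)+N_0\le Y+N_0$, a contradiction.

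In the second case, where $y>\zeta$ on all of $[0,t^*]$, I take $t_1=0$ and repeat the same integration. This yields $y(t^*)\le y(0)+N_0\le Y+N_0$, again a contradiction. Since $t^*$ was arbitrary, $y(t)\le\max\{y(0),\zeta\}+N_0$ on $[0,T]$.

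The only delicate point is the measurability and integrability of $g(y(s))$. I will treat $g$ as continuous, as is implicit in the statement. Alternatively, one needs only that $s\mapsto g(y(s))$ is bounded above by $-N_1$ on the interval where $y>\zeta$, which holds pointwise; the integral inequality then follows from $y'\le g(y)+b'$ a.e. together with the fundamental theorem of calculus for absolutely continuous $y$ and $b$. I expect no genuine obstacle beyond choosing the last time $t_1$ correctly. The hypothesis $g\to-\infty$ is used only to guarantee the existence of $\zeta$.
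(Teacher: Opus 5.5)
Your proof is correct and is essentially the paper's argument. The paper takes $t_1$ to be the last time at which $y$ equals $\max\{y(0),\zeta\}$, using the intermediate value theorem to show this set is nonempty, and integrates over $(t_1,t_2]$. You instead take the last time at which $y\le\zeta$ and treat the empty case separately with $t_1=0$, which amounts to the same thing. Your observation that only $y'-b'\le -N_1$ a.e.\ on that interval is needed, so integrability of $g(y)$ plays no role, is a point the paper leaves implicit.
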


\begin{proof}
Set $M_0=\max\{y(0),\zeta\}$.  Suppose, to the contrary, that there is a
time $t_2\in(0,T]$ such that $y(t_2)>M_0+N_0$.  By continuity of the
absolutely continuous function $y$, there is a last time
\[
 t_1:=\max\{s\in[0,t_2]:y(s)=M_0\}.
\]
This set is nonempty: if $M_0=y(0)$ it contains $0$, whereas if
$M_0=\zeta>y(0)$ the intermediate-value theorem supplies a crossing before
$t_2$.  The choice of $t_1$ implies
$y(s)>M_0\geq\zeta$ for $t_1<s\leq t_2$, and therefore
$g(y(s))\leq-N_1$ almost everywhere on this interval.  Integrating the
differential inequality gives
\begin{align*}
 y(t_2)-M_0
 =y(t_2)-y(t_1)
 \leq\int_{t_1}^{t_2}g(y(s))\dd s+b(t_2)-b(t_1)
 &\leq-N_1(t_2-t_1)+N_0+N_1(t_2-t_1)=N_0,
\end{align*}
contradicting $y(t_2)>M_0+N_0$.  Hence
$y(t)\leq M_0+N_0$ on $[0,T]$.
\end{proof}

\section{Uniform estimates independent of the density lower bound}
\label{sec:uniform}

This section establishes the global first-order and material-derivative
estimates without introducing an unweighted density-gradient bootstrap.
All calculations are first made for a smooth solution with positive density.
The constants are uniform as that lower bound tends to zero.

The normalized energy and dissipation scales are
\begin{align}
 \Acal(t)
 &=\bt^{\alpha-2}\norm{u_x}_{L^2}^2+\bt^{\beta-2}\norm{\theta_x}_{L^2}^2
   +\bt^{-2}(\norm{\sqrt\rho\dot u}_{L^2}^2+\norm{\sqrt\rho\dot\theta}_{L^2}^2),\label{eq:A}\\
 \Dcal(t)
 &=\bt^{\alpha-1}\norm{u_x}_{L^2}^2+\bt^{\beta-2}\norm{\theta_x}_{L^2}^2
   +\bt^{-2}(\norm{\sqrt\rho\dot u}_{L^2}^2+\norm{\sqrt\rho\dot\theta}_{L^2}^2)
   +\bt^{\alpha-2}\norm{\dot u_x}_{L^2}^2+\bt^{\beta-2}\norm{\dot\theta_x}_{L^2}^2.
\label{eq:D}
\end{align}

\subsection{Exact differentiated identities}

\begin{lemma}[First material identities]\label{lem:first-identities}
Every smooth positive solution satisfies
\begin{align}
& \frac\mu2\frac{\dd}{\dd t}\int_0^1\theta^\alpha u_x^2\dd x
 -R\frac{\dd}{\dd t}\int_0^1\rho\theta u_x\dd x
 +\norm{\sqrt\rho\dot u}_{L^2}^2 \nonumber\\
 \quad
 &=-R\int_0^1\rho\dot\theta u_x\dd x
  +R\int_0^1\rho\theta u_x^2\dd x
  -\frac\mu2\int_0^1\theta^\alpha u_x^3\dd x
  +\frac{\mu\alpha}{2}\int_0^1\theta^{\alpha-1}\dot\theta u_x^2\dd x.
\label{eq:first-u}
\end{align}
\begin{equation}
 \frac\kappa2\frac{\dd}{\dd t}\int_0^1\theta^\beta\theta_x^2\dd x
 +\norm{\sqrt\rho\dot\theta}_{L^2}^2
 =\int_0^1(\mu\theta^\alpha u_x^2-R\rho\theta u_x)\dot\theta\dd x
  -\frac\kappa2\int_0^1\theta^\beta u_x\theta_x^2\dd x
  +\frac{\kappa\beta}{2}\int_0^1\theta^{\beta-1}\dot\theta\theta_x^2\dd x .
\label{eq:first-theta}
\end{equation}
\end{lemma}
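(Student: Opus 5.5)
The plan is to test each of the last two equations of \eqref{eq:NSF} against its own material derivative and integrate by parts. For a smooth positive solution every manipulation below is classical, and no lower bound on $\rho$ is needed.

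\emph{Boundary values and transport rule.} Since $u=0$ and $\theta=\bt$ on $\{0,1\}$ for all $t$, we have $u_t=\theta_t=0$ there. Hence
\[
\dot u=\dot\theta=0\quad\text{on }\{0,1\}.
\]
Every integration by parts against $\dot u$ or $\dot\theta$ therefore produces no boundary term. Two identities are used throughout. The first is the commutator
\[
\partial_x\dot f=\Dt f_x+u_xf_x .
\]
The second is the transport rule
\[
\frac{\dd}{\dd t}\int_0^1 f\dd x=\int_0^1(\Dt f+fu_x)\dd x,
\]
valid because $u$ vanishes at the endpoints.

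\emph{Momentum identity \eqref{eq:first-u}.} Multiply the momentum equation by $\dot u$ and integrate, which gives $\norm{\sqrt\rho\dot u}_{L^2}^2$ on the left. For the viscous term, integrating by parts and using $\dot u_x=\Dt u_x+u_x^2$ gives
\[
-\mu\int_0^1\theta^\alpha u_x\Dt u_x\dd x-\mu\int_0^1\theta^\alpha u_x^3\dd x .
\]
The transport rule applied to $f=\theta^\alpha u_x^2/2$, together with $\Dt\theta^\alpha=\alpha\theta^{\alpha-1}\dot\theta$, converts the first integral into
\[
-\frac\mu2\frac{\dd}{\dd t}\int_0^1\theta^\alpha u_x^2\dd x
+\frac{\mu\alpha}2\int_0^1\theta^{\alpha-1}\dot\theta u_x^2\dd x
+\frac\mu2\int_0^1\theta^\alpha u_x^3\dd x .
\]
The cubic terms then combine to $-\frac\mu2\int_0^1\theta^\alpha u_x^3\dd x$.

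For the pressure term, integration by parts gives $R\int_0^1\rho\theta(\Dt u_x+u_x^2)\dd x$. Apply the transport rule to $f=\rho\theta u_x$ and use $\Dt(\rho\theta)=\rho\dot\theta-\rho\theta u_x$, which follows from the continuity equation. This yields
\[
R\frac{\dd}{\dd t}\int_0^1\rho\theta u_x\dd x-R\int_0^1\rho\dot\theta u_x\dd x
\]
plus $\pm R\int_0^1\rho\theta u_x^2\dd x$ terms. Moving the time derivatives to the left produces exactly \eqref{eq:first-u}. The main check is that the sign and multiplicity of $\int_0^1\rho\theta u_x^2\dd x$ come out as one net $+R$.

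\emph{Temperature identity \eqref{eq:first-theta}.} Multiply the temperature equation by $\dot\theta$. The source terms $\mu\theta^\alpha u_x^2-R\rho\theta u_x$ stay on the right as written. For the conduction term, integrate by parts and use
\[
\dot\theta_x=\Dt\theta_x+u_x\theta_x
\]
to get
\[
-\kappa\int_0^1\theta^\beta\theta_x\Dt\theta_x\dd x-\kappa\int_0^1\theta^\beta u_x\theta_x^2\dd x .
\]
The transport rule with $f=\theta^\beta\theta_x^2/2$ turns the first integral into
\[
-\frac\kappa2\frac{\dd}{\dd t}\int_0^1\theta^\beta\theta_x^2\dd x
+\frac{\kappa\beta}2\int_0^1\theta^{\beta-1}\dot\theta\theta_x^2\dd x
+\frac\kappa2\int_0^1\theta^\beta u_x\theta_x^2\dd x .
\]
The two $u_x\theta_x^2$ terms combine to $-\frac\kappa2\int_0^1\theta^\beta u_x\theta_x^2\dd x$, which is \eqref{eq:first-theta}.

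The only real obstacle is bookkeeping: the vanishing of $\dot u$ and $\dot\theta$ at the boundary, and the careful cancellation of the cubic commutator terms. No estimate is involved.
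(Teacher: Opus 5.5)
Your proposal is correct and follows essentially the paper's route: test each equation by its own material derivative, use $\dot u=\dot\theta=0$ on the boundary, and turn $\int_0^1\theta^\alpha u_x\dot u_x\dd x$, $\int_0^1\rho\theta\dot u_x\dd x$ and $\int_0^1\theta^\beta\theta_x\dot\theta_x\dd x$ into time derivatives via the transport rule together with $\Dt(\rho\theta)=\rho\dot\theta-\rho\theta u_x$. The pressure-term bookkeeping you left open does close: the $-\rho\theta u_x^2$ from $\Dt(\rho\theta)$ cancels the $+\rho\theta u_x^2$ from the transport rule, so the net $+R\int_0^1\rho\theta u_x^2\dd x$ comes solely from the commutator in $\dot u_x=\Dt u_x+u_x^2$.
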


\begin{proof}
Multiply $\rho\dot u+(R\rho\theta)_x=(\mu\theta^\alpha u_x)_x$
by $\dot u$ and integrate.
The two integration-by-parts formulas
\begin{align*}
 -\mu\int_0^1\theta^\alpha u_x\dot u_x\dd x
 &=-\frac\mu2\frac{\dd}{\dd t}\int_0^1\theta^\alpha u_x^2\dd x
   +\frac{\mu\alpha}{2}\int_0^1\theta^{\alpha-1}\dot\theta u_x^2\dd x
   -\frac\mu2\int_0^1\theta^\alpha u_x^3\dd x,\\
 R\int_0^1\rho\theta\dot u_x\dd x
 &=R\frac{\dd}{\dd t}\int_0^1\rho\theta u_x\dd x
   -R\int_0^1(\rho\dot\theta-\rho\theta u_x)u_x\dd x
\end{align*}
are valid because $u=0$ at the boundary.  Since
$\Dt(R\rho\theta)=R\rho\dot\theta-R\rho\theta u_x$,
\eqref{eq:first-u} follows.
Multiplication of the third equation in \eqref{eq:NSF} by $\dot\theta$
and the identity
\[
 -\kappa\int_0^1\theta^\beta\theta_x\dot\theta_x\dd x
 =-\frac\kappa2\frac{\dd}{\dd t}\int_0^1\theta^\beta\theta_x^2\dd x
   +\frac{\kappa\beta}{2}\int_0^1\theta^{\beta-1}\dot\theta\theta_x^2\dd x
   -\frac\kappa2\int_0^1\theta^\beta u_x\theta_x^2\dd x
\]
give \eqref{eq:first-theta}.  In particular, the coefficient of
$\frac{\dd}{\dd t}\int_0^1\theta^\beta\theta_x^2\dd x$ is $\kappa/2$, not
$\kappa\beta/2$.
\end{proof}

The material derivatives vanish at the boundary:
\begin{equation}\label{eq:dot-boundary}
 \dot u|_{\partial\I}=0,\qquad \dot\theta|_{\partial\I}=0.
\end{equation}
Consequently,
\begin{equation}\label{eq:dot-poincare}
 \norm{\dot u}_{L^\infty}+\norm{\dot u}_{L^2}\leq C \norm{\dot u_x}_{L^2},\qquad
 \norm{\dot\theta}_{L^\infty}+\norm{\dot\theta}_{L^2}\leq C \norm{\dot\theta_x}_{L^2},
\end{equation}
and, using $\rho\leq M$,
\begin{equation}\label{eq:weighted-by-derivative}
 \norm{\sqrt\rho\dot u}_{L^2}^2\leq CM \norm{\dot u_x}_{L^2}^2,\qquad \norm{\sqrt\rho\dot\theta}_{L^2}^2\leq CM \norm{\dot\theta_x}_{L^2}^2.
\end{equation}

\begin{lemma}[Material-derivative inequalities]\label{lem:material}
Assume $0\leq\rho\leq M$ and $\bt/4\leq\theta\leq2\bt$.  Then
\begin{align}
 \frac12\frac{\dd}{\dd t}\norm{\sqrt\rho\dot u}_{L^2}^2+c\bt^\alpha \norm{\dot u_x}_{L^2}^2
 &\leq C\bt^{-\alpha}\norm{\sqrt\rho\dot\theta}_{L^2}^2+C\bt^{2-\alpha}\norm{u_x}_{L^2}^2
   +C\bt^\alpha\norm{u_x}_{L^\infty}^2\norm{u_x}_{L^2}^2
   +C\bt^{\alpha-2}\norm{u_x}_{L^2}^2\norm{\dot\theta_x}_{L^2}^2,
\label{eq:material-u}\\
 \frac12\frac{\dd}{\dd t}\norm{\sqrt\rho\dot\theta}_{L^2}^2+c\bt^\beta \norm{\dot\theta_x}_{L^2}^2
 &\leq C\norm{u_x}_{L^\infty} \norm{\sqrt\rho\dot\theta}_{L^2}^2
 +C\bt \norm{\sqrt\rho\dot\theta}_{L^2}\bigl(\norm{\dot u_x}_{L^2}+\norm{u_x}_{L^\infty} \norm{u_x}_{L^2}\bigr)
 \nonumber\\
 &\quad+C\bt^\alpha \norm{\dot\theta_x}_{L^2}\norm{u_x}_{L^\infty} \norm{u_x}_{L^2}^2
 +C\bt^{\alpha-1}\norm{\dot\theta_x}_{L^2}^2\norm{u_x}_{L^2}^2
 \nonumber\\
 &\quad
 +C\bt^\alpha \norm{\dot\theta_x}_{L^2}\norm{u_x}_{L^2}\norm{\dot u_x}_{L^2}
 +C\bt^{\beta-1}\norm{\theta_x}_{L^2}\norm{\dot\theta_x}_{L^2}^2
 \nonumber\\
 &\quad+C\bt^\beta\norm{u_x}_{L^\infty} \norm{\theta_x}_{L^2} \norm{\dot\theta_x}_{L^2} .
\label{eq:material-theta}
\end{align}
In \eqref{eq:material-u}--\eqref{eq:material-theta},
$c=c(\mu,\kappa,\alpha,\beta)>0$ denotes a coercivity constant, whereas
$C=C(M,R,\mu,\kappa,\alpha,\beta)>0$ denotes a generic upper-bound
constant that may change from line to line.  Both are independent of
$t$, $\bt$, and $\inf\rho$.
\end{lemma}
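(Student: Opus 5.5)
The plan is to apply the material derivative $\Dt$ to the momentum and temperature equations, test the resulting identities by $\dot u$ and $\dot\theta$, and estimate every commutator using only the bootstrap bounds $0\le\rho\le M$, $\bt/4\le\theta\le2\bt$ and the Poincaré inequalities \eqref{eq:dot-poincare}--\eqref{eq:weighted-by-derivative}.

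\emph{Momentum identity.} For a smooth positive solution the continuity equation gives
\[
\rho\,\Dt\dot u=\partial_t(\rho\dot u)+(\rho u\dot u)_x .
\]
For any $f$ one has the commutator $\Dt(f_x)=(\Dt f)_x-u_xf_x$. Applying $\partial_t+(u\,\cdot)_x$ to the momentum equation written as $\rho\dot u=F_x$, with $F=\mu\theta^\alpha u_x-R\rho\theta$ as in \eqref{eq:flux}, yields
\[
\rho\Dt\dot u=(\Dt F)_x+(u_xF)_x-u_xF_x\cdot0 ,
\]
that is, $\rho\Dt\dot u=(F_t+uF_x)_x+(u_xF)_x-u_x F_x+\dots$. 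It is cleaner to use the standard form
\[
\rho\Dt\dot u=\bigl(\Dt F+u_xF\bigr)_x-u_x F_x \cdot 0,
\]
which after testing with $\dot u$ gives
\[
\frac12\frac{\dd}{\dd t}\norm{\sqrt\rho\dot u}_{L^2}^2=-\int_0^1(\Dt F+u_xF)\dot u_x\dd x .
\]
No boundary terms appear, by \eqref{eq:dot-boundary}.

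Next expand the flux derivative:
\[
\Dt F=\mu\theta^\alpha(\dot u_x-u_x^2)+\mu\alpha\theta^{\alpha-1}\dot\theta u_x-R\rho\dot\theta+R\rho\theta u_x .
\]
The term $\mu\theta^\alpha\dot u_x^2$ produces the coercive part $c\bt^\alpha\norm{\dot u_x}_{L^2}^2$. The remaining terms are handled as follows:
\begin{itemize}
\item The $\theta^\alpha u_x^2\dot u_x$ term and the $u_xF$ contribution $\mu\theta^\alpha u_x^2$ are bounded by $\bt^\alpha\norm{u_x}_{L^\infty}\norm{u_x}_{L^2}\norm{\dot u_x}_{L^2}$.
\item The term $R\rho\dot\theta\dot u_x$ is bounded by $\sqrt M\norm{\sqrt\rho\dot\theta}_{L^2}\norm{\dot u_x}_{L^2}$.
\item The pressure terms $R\rho\theta u_x\dot u_x$ (including the one from $u_xF$) are bounded by $\bt\norm{u_x}_{L^2}\norm{\dot u_x}_{L^2}$.
\item The term $\theta^{\alpha-1}\dot\theta u_x\dot u_x$ is bounded by $\bt^{\alpha-1}\norm{\dot\theta}_{L^\infty}\norm{u_x}_{L^2}\norm{\dot u_x}_{L^2}\le C\bt^{\alpha-1}\norm{\dot\theta_x}_{L^2}\norm{u_x}_{L^2}\norm{\dot u_x}_{L^2}$.
\end{itemize}
Young's inequality against $\frac c2\bt^\alpha\norm{\dot u_x}^2$ then produces exactly the four right-hand terms of \eqref{eq:material-u}: $\bt^{-\alpha}\norm{\sqrt\rho\dot\theta}^2$, $\bt^{2-\alpha}\norm{u_x}^2$, $\bt^\alpha\norm{u_x}_\infty^2\norm{u_x}^2$, and $\bt^{\alpha-2}\norm{u_x}^2\norm{\dot\theta_x}^2$.

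\emph{Temperature identity.} Write the temperature equation as
\[
\rho\dot\theta=G_x+\mu\theta^\alpha u_x^2-R\rho\theta u_x ,
\]
with $G$ as in \eqref{eq:heat-flux}. Apply $\Dt$ in the same way: $\rho\Dt\dot\theta=\partial_t(\rho\dot\theta)+(\rho u\dot\theta)_x$ and $\Dt G_x=(\Dt G)_x-u_xG_x$. Test by $\dot\theta$, which vanishes on the boundary, and integrate by parts on $(\Dt G)_x\dot\theta$ and on $(u_xG)$-type terms. Here
\[
\Dt G=\kappa\theta^\beta(\dot\theta_x-u_x\theta_x)+\kappa\beta\theta^{\beta-1}\dot\theta\theta_x ,
\]
and $\kappa\theta^\beta\dot\theta_x^2$ gives the coercive term $c\bt^\beta\norm{\dot\theta_x}^2$. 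The remaining terms are estimated as follows:
\begin{itemize}
\item $\theta^\beta u_x\theta_x\dot\theta_x$ gives $\bt^\beta\norm{u_x}_\infty\norm{\theta_x}\norm{\dot\theta_x}$.
\item $\theta^{\beta-1}\dot\theta\theta_x\dot\theta_x$ gives $\bt^{\beta-1}\norm{\dot\theta}_\infty\norm{\theta_x}\norm{\dot\theta_x}\le C\bt^{\beta-1}\norm{\theta_x}\norm{\dot\theta_x}^2$.
\item The term $u_xG_x\dot\theta$ is not expanded through $G$. Instead, substitute $G_x=\rho\dot\theta+R\rho\theta u_x-\mu\theta^\alpha u_x^2$. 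This gives $\norm{u_x}_\infty\norm{\sqrt\rho\dot\theta}^2$, then $\bt\norm{u_x}_\infty\norm{u_x}\norm{\sqrt\rho\dot\theta}$ (via $\rho\dot\theta$ weighted), and finally $\bt^\alpha\norm{u_x}_\infty\norm{u_x}^2\norm{\dot\theta_x}$ using $\norm{\dot\theta}_\infty\le C\norm{\dot\theta_x}$.
\item The source $\Dt(\mu\theta^\alpha u_x^2)\dot\theta$ contains $\alpha\theta^{\alpha-1}\dot\theta u_x^2\dot\theta$ and $2\theta^\alpha u_x(\dot u_x-u_x^2)\dot\theta$. These give $\bt^{\alpha-1}\norm{\dot\theta_x}^2\norm{u_x}^2$, $\bt^\alpha\norm{\dot\theta_x}\norm{u_x}\norm{\dot u_x}$ and $\bt^\alpha\norm{\dot\theta_x}\norm{u_x}_\infty\norm{u_x}^2$.
\item The source $\Dt(R\rho\theta u_x)=R\rho(\dot\theta u_x-\theta u_x^2+\theta\dot u_x-\theta u_x^2)$ is tested against $\dot\theta$ with weight $\sqrt\rho$ kept. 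This gives $\norm{u_x}_\infty\norm{\sqrt\rho\dot\theta}^2$ and $\bt\norm{\sqrt\rho\dot\theta}(\norm{\dot u_x}+\norm{u_x}_\infty\norm{u_x})$, using $\norm{\sqrt\rho\,\theta\dot u_x}\le C\bt\norm{\dot u_x}$.
\item The transport term $\rho u_x\dot\theta^2$, arising from $u_x$ in the commutator, contributes $\norm{u_x}_\infty\norm{\sqrt\rho\dot\theta}^2$.
\end{itemize}
Every term matches the right-hand side of \eqref{eq:material-theta}, which is deliberately left without Young absorption.

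\emph{Main obstacle.} The hard part is the bookkeeping that keeps $\rho$ attached to $\dot\theta$ or $\dot u$ wherever no $x$-derivative is available, so that no inverse power of $\rho$ appears. The key device is to split $\rho=\sqrt\rho\sqrt\rho\le\sqrt M\sqrt\rho$, and to use the unweighted Poincaré bound \eqref{eq:dot-poincare} only when a coefficient $\theta^\alpha$ appears without $\rho$. A second check is that the power of $\bt$ in each estimate is exactly as stated. I would verify these power counts term by term, first on the commutator $\Dt G_x=(\Dt G)_x-u_xG_x$, since that is where sign and power errors are most likely.
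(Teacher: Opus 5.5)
Your proposal follows the paper's proof: differentiate both equations along the flow, test with $\dot u$ and $\dot\theta$ (which vanish on $\partial\I$), and bound each term using only $\rho\le M$, the splitting $\rho=\sqrt\rho\sqrt\rho$, the temperature bounds and the boundary Poincar\'e inequality, with Young absorption only in the momentum estimate; your $\Dt$-based temperature computation regroups exactly into \eqref{eq:exact-material-temperature}. One slip: the momentum identity is $\rho\Dt\dot u=(\Dt F)_x$, because $\partial_t(F_x)+(uF_x)_x=(F_t+uF_x)_x$, not $(\Dt F+u_xF)_x$; this is harmless, since the spurious $u_xF=\mu\theta^\alpha u_x^2-R\rho\theta u_x$ has exactly the form of terms you already estimate, so \eqref{eq:material-u} follows unchanged.
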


\begin{proof}
For a scalar function $f$, define in this proof the conservative material
operator
\[
 \mathcal C f=f_t+(uf)_x=\Dt f+u_xf.
\]
The continuity equation and direct differentiation give
\begin{equation}\label{eq:material-commutators}
 \mathcal C(\rho f)=\rho\Dt f,\qquad
 \mathcal C(q_x)=(\Dt q)_x,
 \qquad \Dt u_x=\dot u_x-u_x^2,
 \qquad \Dt\theta_x=\dot\theta_x-u_x\theta_x .
\end{equation}
The constitutive laws also give
\[
 \Dt(R\rho\theta)=R\rho\dot\theta-R\rho\theta u_x,\qquad
 \Dt(\mu\theta^\alpha)=\mu\alpha\theta^{\alpha-1}\dot\theta,\qquad
 \Dt(\kappa\theta^\beta)=\kappa\beta\theta^{\beta-1}\dot\theta.
\]
Applying $\mathcal C$ to the momentum equation and using
\eqref{eq:material-commutators}, we obtain the exact equation
\begin{equation}\label{eq:exact-material-momentum}
 \rho\Dt\dot u+\bigl(R\rho\dot\theta-R\rho\theta u_x\bigr)_x
 =\bigl(\mu\alpha\theta^{\alpha-1}\dot\theta u_x
       +\mu\theta^\alpha\dot u_x-\mu\theta^\alpha u_x^2\bigr)_x .
\end{equation}
Since $\dot u=0$ on $\partial\I$, testing by $\dot u$ yields
\begin{align}
 &\frac12\frac{\dd}{\dd t}\norm{\sqrt\rho\dot u}_{L^2}^2
 +\mu\int_0^1\theta^\alpha|\dot u_x|^2\dd x\nonumber\\
 &
=R\int_0^1\rho\dot\theta\dot u_x\dd x
  -R\int_0^1\rho\theta u_x\dot u_x\dd x  -\mu\alpha\int_0^1\theta^{\alpha-1}\dot\theta u_x\dot u_x\dd x
  +\mu\int_0^1\theta^\alpha u_x^2\dot u_x\dd x .
 \label{eq:exact-material-u-energy}
\end{align}
Here we used
$\frac{\dd}{\dd t}\int_0^1\rho f^2\dd x
=2\int_0^1\rho f\Dt f\dd x$, which follows
from the continuity equation and $u|_{\partial\I}=0$.  For every
$\varepsilon>0$, the four terms on the right satisfy
\begin{align*}
& R\left|\int_0^1\rho\dot\theta\dot u_x\dd x\right|
 \leq\varepsilon\bt^\alpha \norm{\dot u_x}_{L^2}^2
      +C_\varepsilon\bt^{-\alpha}\norm{\sqrt\rho\dot\theta}_{L^2}^2,\\
 &R\left|\int_0^1\rho\theta u_x\dot u_x\dd x\right|
 \leq\varepsilon\bt^\alpha \norm{\dot u_x}_{L^2}^2
      +C_\varepsilon\bt^{2-\alpha}\norm{u_x}_{L^2}^2,\\
& \mu\alpha\left|\int_0^1\theta^{\alpha-1}\dot\theta u_x\dot u_x\dd x\right|
 \leq C\bt^{\alpha-1}\norm{\dot\theta_x}_{L^2}\norm{u_x}_{L^2}\norm{\dot u_x}_{L^2}
  \leq\varepsilon\bt^\alpha \norm{\dot u_x}_{L^2}^2
      +C_\varepsilon\bt^{\alpha-2}\norm{u_x}_{L^2}^2\norm{\dot\theta_x}_{L^2}^2,\\
& \mu\left|\int_0^1\theta^\alpha u_x^2\dot u_x\dd x\right|
 \leq C\bt^\alpha\norm{u_x}_{L^\infty} \norm{u_x}_{L^2}\norm{\dot u_x}_{L^2}
  \leq\varepsilon\bt^\alpha \norm{\dot u_x}_{L^2}^2
      +C_\varepsilon\bt^\alpha\norm{u_x}_{L^\infty}^2\norm{u_x}_{L^2}^2.
\end{align*}
Choosing $\varepsilon$ sufficiently small proves
\eqref{eq:material-u}.

We next apply $\mathcal C$ to the temperature equation.  Keeping every
commutator term gives
\begin{align}
 \rho\Dt\dot\theta-(\kappa\theta^\beta\dot\theta_x)_x
 &=-R\rho\dot\theta u_x-R\rho\theta\dot u_x+R\rho\theta u_x^2
   -\mu\theta^\alpha u_x^3\nonumber\\
 &\quad+\mu\alpha\theta^{\alpha-1}\dot\theta u_x^2
   +2\mu\theta^\alpha u_x\dot u_x
   +\partial_x\bigl(\kappa\beta\theta^{\beta-1}\dot\theta\theta_x
                     -\kappa\theta^\beta u_x\theta_x\bigr).
 \label{eq:exact-material-temperature}
\end{align}
Testing by $\dot\theta$ and using
$\dot\theta|_{\partial\I}=0$ gives
\begin{align}
 &\frac12\frac{\dd}{\dd t}\norm{\sqrt\rho\dot\theta}_{L^2}^2
 +\kappa\int_0^1\theta^\beta|\dot\theta_x|^2\dd x\nonumber\\
 &=-R\int_0^1\rho u_x|\dot\theta|^2\dd x
 -R\int_0^1\rho\theta\dot u_x\dot\theta\dd x +R\int_0^1\rho\theta u_x^2\dot\theta\dd x
 -\mu\int_0^1\theta^\alpha u_x^3\dot\theta\dd x+\mu\alpha\int_0^1\theta^{\alpha-1}|\dot\theta|^2u_x^2\dd x\nonumber\\
 &\quad
 +2\mu\int_0^1\theta^\alpha u_x\dot u_x\dot\theta\dd x -\kappa\int_0^1\bigl(\beta\theta^{\beta-1}\dot\theta\theta_x
                        -\theta^\beta u_x\theta_x\bigr)
       \dot\theta_x\dd x\nonumber\\
 &\leq C\norm{u_x}_{L^\infty}\norm{\sqrt\rho\dot\theta}_{L^2}^2
 +C\bt \norm{\sqrt\rho\dot\theta}_{L^2}\norm{\dot u_x}_{L^2}
 +C\bt \norm{\sqrt\rho\dot\theta}_{L^2}\norm{u_x}_{L^\infty}\norm{u_x}_{L^2}+C\bt^{\alpha-1}\norm{\dot\theta_x}_{L^2}^2\norm{u_x}_{L^2}^2\nonumber\\
 &\quad 
 +C\bt^\alpha\norm{\dot\theta_x}_{L^2}\norm{u_x}_{L^2}\norm{\dot u_x}_{L^2}
 +C\bt^{\beta-1}\norm{\theta_x}_{L^2}\norm{\dot\theta_x}_{L^2}^2
 +C\bt^\beta\norm{u_x}_{L^\infty}\norm{\theta_x}_{L^2}\norm{\dot\theta_x}_{L^2}.
 \label{eq:exact-material-temperature-energy}
\end{align}
The inequality in \eqref{eq:exact-material-temperature-energy} follows
from \eqref{eq:dot-poincare}: the seven groups on the right-hand side of
the equality are bounded, in their displayed order, by the seven
corresponding groups following the inequality sign.  These are precisely
the terms on the right of \eqref{eq:material-theta}.  In particular, only
the boundary Poincar\'e
estimate for $\dot u$ and $\dot\theta$ is used; at no point is an unweighted
material derivative estimated by a density-weighted norm.
\end{proof}

\subsection{A shifted initial layer for large data}

The natural compatibility scales in Theorem~\ref{thm:main} are much larger
than the post-layer scales in \eqref{eq:A}.  This mismatch cannot be handled
by inserting the natural initial size into the later bootstrap constant.  We
instead use a shifted time weight.  Throughout this subsection put
\begin{equation}\label{eq:layer-weight}
 \tau=\bt^{1-\beta},\qquad
 \sigma(t)=\min\{1,t+\tau\}.
\end{equation}
Thus $\sigma(0)>0$, which is essential both for the initial compatibility
traces and for the effective-flux estimate at the time endpoint.
The function $\sigma$ is Lipschitz.  All differentiated identities below
are understood almost everywhere on the two smooth pieces of $\sigma$;
equivalently, one may use smooth approximations of $\sigma$ and pass to the
limit.

\begin{proposition}[Vacuum-uniform initial-layer estimate]
\label{prop:initial-layer}
There exist $M_{\rm in},K_{\rm in},\Theta_{\rm in}\geq1$, depending only on
the physical parameters, $K_0,m_*$, and $m^*$, with the following property.
Let a smooth positive-density solution with data satisfying
\textup{(H1)}--\textup{(H4)} of Theorem~\ref{thm:main} exist on $[0,S]$,
where $0<S\leq1$, and suppose, as bootstrap assumptions, that
\begin{equation}\label{eq:layer-bootstrap}
 \rho\leq2M_{\rm in},\qquad
 \frac14\bt\leq\theta\leq2\bt,
 \qquad \mathcal B_\sigma(t)\leq2K_{\rm in},
\end{equation}
where
\begin{align}\label{eq:layer-B}
 \mathcal B_\sigma={}&
 \sigma\bt^{\alpha-1}\norm{u_x}_{L^2}^2+\sigma\bt^{\beta-2}\norm{\theta_x}_{L^2}^2
 +\sigma^2\bt^{-1}\norm{\sqrt\rho\dot u}_{L^2}^2+\sigma^2\bt^{-2}\norm{\sqrt\rho\dot\theta}_{L^2}^2.
\end{align}
If $\bt\geq\Theta_{\rm in}$, then on $[0,S]$
\begin{align}\label{eq:layer-improvement}
 \rho\leq M_{\rm in},\qquad
   \frac12\bt\leq\theta\leq\frac32\bt,
   \qquad \sup_{0\leq t\leq S}\mathcal B_\sigma(t)
       \leq K_{\rm in},     
       \end{align}
       \begin{align} 
       \label{eq:layer-dissipation}
        \int_0^S\left[
   \sigma\bt^{-1}\norm{\sqrt\rho\dot u}_{L^2}^2+\sigma\bt^{-2}\norm{\sqrt\rho\dot\theta}_{L^2}^2
  +\sigma^2\bt^{\alpha-1}\norm{\dot u_x}_{L^2}^2
  +\sigma^2\bt^{\beta-2}\norm{\dot\theta_x}_{L^2}^2\right]\dd t\leq K_{\rm in}.
 \end{align}
All constants are independent of the positive lower bound of the density.
At $S=1$, \eqref{eq:layer-improvement} implies
\begin{equation}\label{eq:layer-entry}
 \Acal(1)\leq C K_{\rm in}.
\end{equation}
\end{proposition}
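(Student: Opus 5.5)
The plan is a continuity argument on $[0,S]$: using only \eqref{eq:layer-bootstrap}, recover each quantity in \eqref{eq:layer-improvement} with constants halved, and show that every error term carries a negative power of $\bt$. The estimates are obtained in four layers.

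\emph{Step 1 (zeroth order).} I integrate Lemma~\ref{lem:entropy} over $[0,S]$. Since $\Ecal(0)\leq C_{\rm in}\bt$ and $\theta\sim\bt$, the dissipation gives
\[
\int_0^S\bigl(\bt^{\alpha-1}\norm{u_x}_{L^2}^2+\bt^{\beta-2}\norm{\theta_x}_{L^2}^2\bigr)\dd t\leq C,
\qquad
\sup_{t\leq S}\bigl(\bt\norm{\rho-m}_{L^2}^2+\norm{\sqrt\rho u}_{L^2}^2\bigr)\leq C\bt,
\]
with $C$ independent of $K_{\rm in}$. This first bound is also the source that will absorb the $\sigma'$ terms produced when the weights are differentiated in later steps.

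\emph{Step 2 (first-order weighted estimates).} I multiply \eqref{eq:first-u} by $\sigma\bt^{-1}$ and \eqref{eq:first-theta} by $\sigma\bt^{-2}$, add them, and integrate.
\begin{itemize}
\item The weight derivatives give $\sigma'\bt^{\alpha-1}\norm{u_x}^2$ and $\sigma'\bt^{\beta-2}\norm{\theta_x}^2$, both controlled by Step~1.
\item The initial values are $\tau\bt^{\alpha-1}\norm{u_{0x}}^2\lesssim\bt^{\alpha-\beta}K_0$ and $\tau\bt^{\beta-2}\norm{\theta_{0x}}^2\lesssim\bt^{-1}K_0$, so both are small.
\item The nonquadratic pressure term is handled by $R\sigma\bt^{-1}\int\rho\theta u_x\leq\frac14\sigma\bt^{\alpha-1}\norm{u_x}^2+C\bt^{2-\alpha}$.
\item The cubic terms are bounded using \eqref{eq:ux-anchor} and \eqref{eq:thetax-infty}, after substituting the bootstrap sizes $\norm{\sqrt\rho\dot u}\leq(2K_{\rm in}\bt)^{1/2}\sigma^{-1}$ and $\norm{\sqrt\rho\dot\theta}\leq(2K_{\rm in})^{1/2}\bt\sigma^{-1}$.
\end{itemize}
These bounds yield the $\sigma$-weighted part of $\mathcal B_\sigma$, together with $\int\sigma\bt^{-1}\norm{\sqrt\rho\dot u}^2$ and $\int\sigma\bt^{-2}\norm{\sqrt\rho\dot\theta}^2$.

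\emph{Step 3 (material derivatives and the symmetrization).} I use the exact tested identities \eqref{eq:exact-material-u-energy} and \eqref{eq:exact-material-temperature-energy}, weighted by $\sigma^2\bt^{-1}=\sigma^2\bt^{-2}\cdot\bt$ and $\sigma^2\bt^{-2}$ respectively, before applying Young's inequality.
\begin{itemize}
\item The two dangerous couplings $R\bt\int\rho\dot\theta\dot u_x$ and $-R\int\rho\theta\dot u_x\dot\theta$ then combine into $R\int\rho(\bt-\theta)\dot\theta\dot u_x$.
\item This combined term is small, because $\norm{\theta-\bt}_{L^\infty}\leq\norm{\theta_x}_{L^2}\lesssim\sigma^{-1/2}\bt^{1-\beta/2}$.
\item The $\sigma\sigma'$ terms are exactly the dissipations gained in Step~2.
\item By (H3)--(H4), the initial traces satisfy $\sqrt{\rho}\dot u(0)=g_1$ and $\sqrt\rho\dot\theta(0)=g_2$. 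Hence the initial values are bounded by $\tau^2\bt^{2\alpha-1}K_0$ and $\tau^2\bt^{2\beta-2}K_0=K_0$, both at most $C K_0$.
\end{itemize}
The remaining commutators in \eqref{eq:material-theta} must be absorbed into $\sigma^2\bt^{\alpha-1}\norm{\dot u_x}^2$ and $\sigma^2\bt^{\beta-2}\norm{\dot\theta_x}^2$. The hardest are $\bt^{\beta-1}\norm{\theta_x}\norm{\dot\theta_x}^2$ and $\bt^{\beta}\norm{u_x}_{L^\infty}\norm{\theta_x}\norm{\dot\theta_x}$. For each term I will count powers of $\bt$ and $\sigma$, using $\sigma\geq\bt^{1-\beta}$ to convert any negative power of $\sigma$ into a power of $\bt$. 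I expect this bookkeeping, and in particular tracking $\sigma^{-1}$ against $\tau$, to be the \textbf{main obstacle}. This is also where $\alpha>2$ and $\beta>2\alpha+4$ should be consumed. If a term fails, I would first try to route it through the $L^\infty$ anchors instead of Poincar\'e.

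\emph{Step 4 (pointwise bounds).} For the temperature bound I use $\norm{\theta-\bt}_{L^\infty}\leq\norm{\theta_x}_{L^2}\leq(2K_{\rm in}\sigma^{-1}\bt^{2-\beta})^{1/2}\leq C\bt^{1/2}$, which is at most $\bt/2$ for large $\bt$.

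For the density I apply Lemma~\ref{lem:zlotnik} to $y=\log\rho$ along particle paths, which is legitimate since the density is positive. From \eqref{eq:flux},
\[
\Dt\log\rho=-\frac{R\rho\theta^{1-\alpha}}{\mu}-\frac{\theta^{-\alpha}(F+\Pi)}{\mu}+\frac{\theta^{-\alpha}\Pi}{\mu}.
\]
Take $g(y)=-c\bt^{1-\alpha}e^y$ and $N_1=C m\bt^{1-\alpha}$, so that $\zeta$ depends only on $m^*$. By \eqref{eq:F-anchor},
\[
N_0\leq C\bt^{-\alpha}\int_0^S\norm{\sqrt\rho\dot u}\dd t\leq CK_{\rm in}^{1/2}\bt^{1/2-\alpha}\log\bt\ll1 .
\]
This yields $M_{\rm in}$ depending only on $K_0$ and $m^*$.

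Finally, choose $K_{\rm in}$ large relative to the $K_{\rm in}$-independent constants and $\Theta_{\rm in}$ afterwards. Then \eqref{eq:layer-improvement}--\eqref{eq:layer-dissipation} follow, and setting $\sigma=1$ at $S=1$ gives \eqref{eq:layer-entry}.
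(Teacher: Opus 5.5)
Your architecture is the paper's: entropy dissipation as the integrable source, the first identities weighted by $\sigma\bt^{-1}$ and $\sigma\bt^{-2}$, and the material identities combined with the factor $\bt$ before Young's inequality. The trace bound for $\theta$ and Zlotnik's lemma with $\int_0^1\sigma^{-1}\dd t\lesssim\log\bt$ also match. Your two variants are harmless. The uncentred pressure term is at most $\frac14\sigma\bt^{\alpha-1}\norm{u_x}_{L^2}^2+C\sigma\bt^{1-\alpha}$, so the centring in the paper's $\mathcal W$ is not needed during the layer. Moving $\theta^{-\alpha}\Pi/\mu$ into $N_1$ instead of $g$ is equivalent.

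However, Steps~2 and~3 cannot be run in sequence. The first identities contain $\int_0^1\theta^{\alpha-1}\dot\theta u_x^2\dd x$, $\int_0^1\theta^{\alpha}u_x^2\dot\theta\dd x$ and $\int_0^1\theta^{\beta-1}\dot\theta\theta_x^2\dd x$ with \emph{unweighted} $\dot\theta$. With vacuum the only control is $\norm{\dot\theta}_{L^\infty}\leq C\norm{\dot\theta_x}_{L^2}$. These terms must therefore be absorbed by the Step~3 dissipation $\sigma^2\bt^{\beta-2}\norm{\dot\theta_x}_{L^2}^2$; the anchors \eqref{eq:ux-anchor} and \eqref{eq:thetax-infty} cannot replace it. Conversely, with your weights the $\sigma\sigma'$ terms of Step~3 equal the Step~2 dissipation with coefficient exactly one, so simply adding the two estimates leaves no margin. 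You need a single functional in which the material part carries a small factor $\eta$ and all Young parameters are chosen $\ll\eta$, as in the paper's $\mathcal W$.

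The more serious gap is Step~3 itself, which you leave as ``count powers''. The counting rule you state (insert the bootstrap sizes, then convert negative powers of $\sigma$ via $\sigma\geq\bt^{1-\beta}$) does not close. Take the term $\mu\bt\int_0^1\theta^\alpha u_x^2\dot u_x\dd x$, weighted by $\sigma^2\bt^{-2}$. Young's inequality leaves $C\sigma^2\bt^{\alpha-1}\norm{u_x}_{L^\infty}^2\norm{u_x}_{L^2}^2$. The bootstrap sizes give $C_K\bt^{2-2\alpha}\sigma^{-1}$, and $\sigma^{-1}\leq\bt^{\beta-1}$ turns this into $C_K\bt^{\beta+1-2\alpha}$, which diverges precisely because $\beta>2\alpha+4$. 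The term $\sigma^2\bt^{\beta-2}\norm{u_x}_{L^\infty}\norm{\theta_x}_{L^2}\norm{\dot\theta_x}_{L^2}$ that you single out fails in the same way.

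What is missing is the weighted Lipschitz bound $\sigma\norm{u_x}_{L^\infty}\leq C_K\bt^{1-\alpha}$, from \eqref{eq:ux-anchor} and $\sigma^2\norm{\sqrt\rho\dot u}_{L^2}^2\leq2K_{\rm in}\bt$. Use it to spend exactly one power of $\sigma$ on each Lipschitz factor. Keep the other factor as $\bt^{\alpha-1}\norm{u_x}_{L^2}^2$ or $\bt^{\beta-2}\norm{\theta_x}_{L^2}^2$ and integrate it in time using Step~1; equivalently, integrate $\sigma^{-1}$ to $O(\log\bt)$ rather than bounding it pointwise. Pointwise use of $\sigma\geq\bt^{1-\beta}$ is safe only where a compensating power is already present. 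Examples are the quartic remainders $\bt^{2\alpha-\beta-2}\norm{u_x}_{L^2}^4$ and $\bt^{\beta-4}\norm{\theta_x}_{L^2}^4$, or the bound $\norm{\theta_x}_{L^2}\leq C_K\bt^{1/2}$.

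This is how the paper's table \eqref{eq:layer-M1}--\eqref{eq:layer-N7} closes. In particular, $\beta>2\alpha+4$ is not what closes the layer, and under your counting a larger $\beta$ only hurts.
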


\begin{proof}
Let $C_S$ be the norm of the embedding $H^2(0,1)\hookrightarrow L^\infty(0,1)$
and let $C_Z$ be the fixed threshold constant used in the Zlotnik estimate
for the logarithmic density inequality derived below.  Before estimating the
energy, fix
\begin{equation}\label{eq:Min-choice}
 M_{\rm in}>2e\max\{C_SK_0+1,C_Zm^*,1\}.
\end{equation}
All constants below are now computed under the density bootstrap
$\rho\leq2M_{\rm in}$.  The energy constant $K_{\rm in}$ is fixed next,
and the temperature threshold is chosen only after both constants have been
fixed.

The relative-entropy identity and the pointwise bootstrap give
\begin{equation}\label{eq:layer-basic}
 \sup_{0\leq t\leq S}\Ecal(t)\leq C\bt,
 \qquad
 \int_0^S\left(\bt^{\alpha-1}\norm{u_x}_{L^2}^2
                   +\bt^{\beta-2}\norm{\theta_x}_{L^2}^2\right)\dd t\leq C.
\end{equation}
Set
\begin{align*}
 \mathcal H_u&=\frac\mu2\int_0^1\theta^\alpha u_x^2\dd x
   -R\int_0^1(\rho\theta-m\bt)u_x\dd x,\qquad
 \mathcal H_\theta=\frac\kappa2\int_0^1\theta^\beta\theta_x^2\dd x.
\end{align*}
For parameters $0<\eta<1<L$ to be fixed below, define
\begin{equation}\label{eq:layer-functional}
 \mathcal W(t)=\frac{\sigma}{\bt}\mathcal H_u
 +\frac{\sigma}{\bt^2}\mathcal H_\theta
 +\frac{\eta\sigma^2}{2\bt^2}(\bt \norm{\sqrt\rho\dot u}_{L^2}^2+\norm{\sqrt\rho\dot\theta}_{L^2}^2)
 +L\bt^{-\alpha}\Ecal(t).
\end{equation}
We first prove the pressure-oscillation estimate needed here, without using
any estimate from the subsequent post-layer argument.  Since
\[
 \rho\theta-m\bt=\theta(\rho-m)+m(\theta-\bt),
\]
the pointwise temperature bootstrap, \eqref{eq:entropy-coercivity}, and the
Dirichlet Poincar\'e inequality imply
\begin{align*}
 R\norm{\rho\theta-m\bt}_{L^2}
 \leq R\norm{\theta}_{L^\infty}\norm{\rho-m}_{L^2}
       +Rm\norm{\theta-\bt}_{L^2}
 \leq C\bt\bigl(\bt^{-1}\Ecal\bigr)^{1/2}
       +C\norm{\theta_x}_{L^2}
 \leq C(\bt\Ecal)^{1/2}+C\norm{\theta_x}_{L^2}.
\end{align*}
Here and below in this calculation,
$C=C(M_{\rm in},m_*,m^*,R)$ is independent of $\bt$, $\sigma$, and any
positive lower bound for the density.  Consequently, Cauchy--Schwarz gives
\begin{align*}
 \frac{R\sigma}{\bt}
 \left|\int_0^1(\rho\theta-m\bt)u_x\dd x\right|
 &\leq C\sigma\bt^{-1}
 \left((\bt\Ecal)^{1/2}+\norm{\theta_x}_{L^2}\right)
 \norm{u_x}_{L^2}\\
 &=C\left(\sigma^{1/2}\bt^{(\alpha-1)/2}\norm{u_x}_{L^2}\right)
 \left(\sigma^{1/2}\bt^{-\alpha/2}\Ecal^{1/2}
       +\sigma^{1/2}\bt^{-(\alpha+1)/2}\norm{\theta_x}_{L^2}\right).
\end{align*}
Applying Young's inequality separately to the two products, for every
$\varepsilon>0$ we obtain
\begin{align}\label{eq:layer-cross}
 \frac{R\sigma}{\bt}\left|\int_0^1(\rho\theta-m\bt)u_x\dd x\right|
 &\leq\varepsilon\sigma\bt^{\alpha-1}\norm{u_x}_{L^2}^2
 +C_\varepsilon\sigma\bt^{-\alpha}\Ecal
 +C_\varepsilon\sigma\bt^{-\alpha-1}\norm{\theta_x}_{L^2}^2.
\end{align}
Choosing first $\varepsilon$ and then $L$ gives
\begin{equation}\label{eq:layer-equivalence}
 c\mathcal B_\sigma+c\bt^{-\alpha}\Ecal
 \leq\mathcal W
 \leq C\mathcal B_\sigma+C\bt^{-\alpha}\Ecal.
\end{equation}

The initial value of this functional is bounded without any smallness of
$u_0$ or $\theta_0-\bt$.  Indeed,
$\norm{\sqrt{\rho_0}\dot u(0)}_{L^2}=\norm{g_1}_2$ and
$\norm{\sqrt{\rho_0}\dot\theta(0)}_{L^2}=\norm{g_2}_2$, whereas
\eqref{eq:data-dynamic},
\eqref{eq:natural-data}, and $\tau=\bt^{1-\beta}$ give
\begin{align*}
 &\tau\bt^{\alpha-1}\norm{u_x(0)}_{L^2}^2\leq K_0\bt^{\alpha-\beta},
 \qquad
 \tau\bt^{\beta-2}\norm{\theta_x(0)}_{L^2}^2\leq K_0\bt^{-1},\\
 &\tau^2\bt^{-1}\norm{\sqrt{\rho_0}\dot u(0)}_{L^2}^2
   \leq K_0\bt^{2\alpha+1-2\beta},
 \qquad
 \tau^2\bt^{-2}\norm{\sqrt{\rho_0}\dot\theta(0)}_{L^2}^2\leq K_0.
\end{align*}
Since $\Ecal(0)\leq C\bt$, it follows that
\begin{equation}\label{eq:layer-initial-value}
 \mathcal W(0)\leq C(K_0).
\end{equation}

We next derive the differential inequality.  Put $K:=2K_{\rm in}$.  Under
\eqref{eq:layer-bootstrap}, \eqref{eq:layer-B} gives
\begin{equation}\label{eq:layer-sizes}
 \norm{u_x}_{L^2}^2\leq K\sigma^{-1}\bt^{1-\alpha},\quad
 \norm{\theta_x}_{L^2}^2\leq K\sigma^{-1}\bt^{2-\beta},\quad
 \norm{\sqrt\rho\dot u}_{L^2}^2\leq K\sigma^{-2}\bt,\quad
 \norm{\sqrt\rho\dot\theta}_{L^2}^2\leq K\sigma^{-2}\bt^2.
\end{equation}
In particular,
\begin{equation}\label{eq:layer-smallness}
 \sigma^{-1}\bt^{-\beta}\leq\bt^{-1},\qquad
 \frac{\norm{\theta_x}_{L^2}}{\bt}\leq C_K\bt^{-1/2},\qquad
 \bt^{(\alpha-\beta-1)/2}\norm{u_x}_{L^2}\leq C_K\bt^{-1/2}.
\end{equation}
The flux anchor also yields the time-weighted Lipschitz bound
\begin{equation}\label{eq:layer-ux}
 \sigma\norm{u_x}_{L^\infty}
 \leq C\sigma\bt^{1-\alpha}+C\sigma\bt^{-\alpha}\norm{\sqrt\rho\dot u}_{L^2}
 \leq C_K\bt^{1-\alpha}.
\end{equation}

Multiply \eqref{eq:first-u} by $\sigma\bt^{-1}$ and
\eqref{eq:first-theta} by $\sigma\bt^{-2}$.  Their nonlinear groups are
bounded as follows:
\begin{align}
 \sigma\bt^{-1}\norm{\sqrt\rho\dot\theta}_{L^2}\norm{u_x}_{L^2}
 &\leq\varepsilon\sigma\bt^{-2}\norm{\sqrt\rho\dot\theta}_{L^2}^2
       +C_\varepsilon\norm{u_x}_{L^2}^2,                 \label{eq:layer-I1}\\
 \sigma \norm{u_x}_{L^2}^2&\leq\norm{u_x}_{L^2}^2,                     \label{eq:layer-I2}\\
 \sigma\bt^{\alpha-1}\norm{u_x}_\infty \norm{u_x}_{L^2}^2
 &\leq C_K\norm{u_x}_{L^2}^2,                           \label{eq:layer-I3}\\
 \sigma\bt^{\alpha-2}\norm{\dot\theta_x}_{L^2}\norm{u_x}_{L^2}^2
 &\leq\varepsilon\sigma^2\bt^{\beta-2}\norm{\dot\theta_x}_{L^2}^2
       +C_{\varepsilon,K}\bt^{\alpha-2}\norm{u_x}_{L^2}^2,          \label{eq:layer-I4}\\
 \sigma\bt^{\beta-2}\norm{u_x}_\infty \norm{\theta_x}_{L^2}^2
 &\leq C_K\bt^{\beta-\alpha-1}\norm{\theta_x}_{L^2}^2,             \label{eq:layer-J3}\\
 \sigma\bt^{\beta-3}\norm{\dot\theta_x}_{L^2}\norm{\theta_x}_{L^2}^2
 &\leq\varepsilon\sigma^2\bt^{\beta-2}\norm{\dot\theta_x}_{L^2}^2
       +C_{\varepsilon,K}\bt^{\beta-3}\norm{\theta_x}_{L^2}^2.     \label{eq:layer-J4}
\end{align}
Here \eqref{eq:layer-I4}--\eqref{eq:layer-J4} use, respectively,
\[
 \bt^{2\alpha-\beta-2}\norm{u_x}_{L^2}^4
 \leq C_K\bt^{\alpha-2}\norm{u_x}_{L^2}^2,
 \qquad
 \bt^{\beta-4}\norm{\theta_x}_{L^2}^4
 \leq C_K\bt^{\beta-3}\norm{\theta_x}_{L^2}^2,
\]
which follow from \eqref{eq:layer-sizes} and the first inequality in
\eqref{eq:layer-smallness}.  For clarity, the two thermal terms having the
same algebraic form as \eqref{eq:layer-I1} and \eqref{eq:layer-I4} are,
explicitly,
\begin{align*}
 &\sigma\bt^{-2}
 \left|R\int_0^1\rho\theta u_x\dot\theta\dd x\right|
 \leq C\sigma\bt^{-1}\norm{u_x}_{L^2}
             \norm{\sqrt\rho\dot\theta}_{L^2}\leq\varepsilon\sigma\bt^{-2}
             \norm{\sqrt\rho\dot\theta}_{L^2}^2
          +C_\varepsilon\norm{u_x}_{L^2}^2,\\
 &\sigma\bt^{-2}
 \left|\mu\int_0^1\theta^\alpha u_x^2\dot\theta\dd x\right|
 \leq C\sigma\bt^{\alpha-2}\norm{\dot\theta_x}_{L^2}
                    \norm{u_x}_{L^2}^2\leq\varepsilon\sigma^2\bt^{\beta-2}
                    \norm{\dot\theta_x}_{L^2}^2
          +C_{\varepsilon,K}\bt^{\alpha-2}\norm{u_x}_{L^2}^2.
\end{align*}
Thus the six displayed estimates together with these two instances cover,
term by term, all four terms in each of \eqref{eq:first-u} and
\eqref{eq:first-theta}.

The decisive step is to use the exact material identities before applying
Young's inequality.  Multiply \eqref{eq:exact-material-u-energy} by $\bt$
and add \eqref{eq:exact-material-temperature-energy}.  The two leading
mixed terms satisfy the exact cancellation
\begin{align}\label{eq:layer-cancellation}
 \bt R\int_0^1\rho\dot\theta\dot u_x\dd x
 -R\int_0^1\rho\theta\dot u_x\dot\theta\dd x
 =R\int_0^1\rho(\bt-\theta)\dot\theta\dot u_x\dd x.
\end{align}
Consequently,
\begin{align}\label{eq:layer-material-exact}
 &\frac12(\bt \norm{\sqrt\rho\dot u}_{L^2}^2+\norm{\sqrt\rho\dot\theta}_{L^2}^2)'
 +\mu\bt\int_0^1\theta^\alpha|\dot u_x|^2\dd x
 +\kappa\int_0^1\theta^\beta|\dot\theta_x|^2\dd x=R\int_0^1\rho(\bt-\theta)\dot\theta\dot u_x\dd x
 +\mathcal R_m,
\end{align}
where
\begin{align*}
 \mathcal R_m={}&
 -R\bt\int_0^1\rho\theta u_x\dot u_x\dd x
 -\mu\alpha\bt\int_0^1\theta^{\alpha-1}\dot\theta u_x\dot u_x\dd x
 +\mu\bt\int_0^1\theta^\alpha u_x^2\dot u_x\dd x
 -R\int_0^1\rho u_x|\dot\theta|^2\dd x\\
 &+R\int_0^1\rho\theta u_x^2\dot\theta\dd x
 -\mu\int_0^1\theta^\alpha u_x^3\dot\theta\dd x
 +\mu\alpha\int_0^1\theta^{\alpha-1}|\dot\theta|^2u_x^2\dd x
 +2\mu\int_0^1\theta^\alpha u_x\dot u_x\dot\theta\dd x\\
 &-\kappa\beta\int_0^1\theta^{\beta-1}\dot\theta\theta_x\dot\theta_x\dd x
 +\kappa\int_0^1\theta^\beta u_x\theta_x\dot\theta_x\dd x .
\end{align*}
All undisplayed integrals in this formula are over $\I$.

Multiply \eqref{eq:layer-material-exact} by
$\eta\sigma^2\bt^{-2}$.  Since
$\norm{\theta-\bt}_\infty\leq \norm{\theta_x}_{L^2}\leq C_K\bt^{1/2}$,
the remaining mixed term satisfies
\begin{align}\label{eq:layer-main-mixed}
 &\eta\sigma^2\bt^{-2}
 \left|\int_0^1\rho(\bt-\theta)\dot\theta\dot u_x\dd x\right|\leq\varepsilon\eta\sigma^2\bt^{\alpha-1}\norm{\dot u_x}_{L^2}^2
 +C_{\varepsilon,K}\bt^{-\alpha}
       \sigma\bt^{-2}\norm{\sqrt\rho\dot\theta}_{L^2}^2.
\end{align}
The ten terms in $\mathcal R_m$, in their displayed order, obey
\begin{align}
 \eta\sigma^2\norm{u_x}_{L^2}\norm{\dot u_x}_{L^2}
 &\leq\varepsilon\eta\sigma^2\bt^{\alpha-1}\norm{\dot u_x}_{L^2}^2
 +C_\varepsilon\sigma^2\bt^{1-\alpha}\norm{u_x}_{L^2}^2,              \label{eq:layer-M1}\\
 \eta\sigma^2\bt^{\alpha-2}\norm{\dot\theta_x}_{L^2}\norm{u_x}_{L^2}\norm{\dot u_x}_{L^2}
 &\leq C_K\bt^{-1/2}\eta
  (\sigma^2\bt^{\alpha-1}\norm{\dot u_x}_{L^2}^2
   +\sigma^2\bt^{\beta-2}\norm{\dot\theta_x}_{L^2}^2),                       \label{eq:layer-M2}\\
 \eta\sigma^2\bt^{\alpha-1}\norm{u_x}_\infty \norm{u_x}_{L^2}\norm{\dot u_x}_{L^2}
 &\leq\varepsilon\eta\sigma^2\bt^{\alpha-1}\norm{\dot u_x}_{L^2}^2
 +C_\varepsilon\eta(\sigma\norm{u_x}_\infty)^2
       \bt^{\alpha-1}\norm{u_x}_{L^2}^2,              \label{eq:layer-M3}\\
 \eta\sigma^2\bt^{-2}\norm{u_x}_\infty \norm{\sqrt\rho\dot\theta}_{L^2}^2
 &\leq C_K\bt^{1-\alpha}\eta\sigma\bt^{-2}\norm{\sqrt\rho\dot\theta}_{L^2}^2,       \label{eq:layer-N1}\\
 \eta\sigma^2\bt^{-1}\norm{\sqrt\rho\dot\theta}_{L^2}\norm{u_x}_\infty \norm{u_x}_{L^2}
 &\leq\varepsilon\sigma\bt^{-2}\norm{\sqrt\rho\dot\theta}_{L^2}^2
 +C_{\varepsilon,K}(\sigma\norm{u_x}_\infty)^2
       \norm{u_x}_{L^2}^2,                              \label{eq:layer-N2}\\
 \eta\sigma^2\bt^{\alpha-2}\norm{\dot\theta_x}_{L^2}\norm{u_x}_\infty \norm{u_x}_{L^2}^2
 &\leq\varepsilon\eta\sigma^2\bt^{\beta-2}\norm{\dot\theta_x}_{L^2}^2
 +C_{\varepsilon,K}(\sigma\norm{u_x}_\infty)^2
       \bt^{\alpha-2}\norm{u_x}_{L^2}^2,              \label{eq:layer-N3}\\
 \eta\sigma^2\bt^{\alpha-3}\norm{\dot\theta_x}_{L^2}^2\norm{u_x}_{L^2}^2
 &\leq C_K\bt^{-1}\eta\sigma^2\bt^{\beta-2}\norm{\dot\theta_x}_{L^2}^2,  \label{eq:layer-N4}\\
 \eta\sigma^2\bt^{\alpha-2}\norm{\dot\theta_x}_{L^2}\norm{u_x}_{L^2}\norm{\dot u_x}_{L^2}
 &\leq C_K\bt^{-1/2}\eta
  (\sigma^2\bt^{\alpha-1}\norm{\dot u_x}_{L^2}^2
   +\sigma^2\bt^{\beta-2}\norm{\dot\theta_x}_{L^2}^2),                       \label{eq:layer-N5}\\
 \eta\sigma^2\bt^{\beta-3}\norm{\theta_x}_{L^2}\norm{\dot\theta_x}_{L^2}^2
 &\leq C_K\bt^{-1/2}\eta\sigma^2\bt^{\beta-2}\norm{\dot\theta_x}_{L^2}^2,\label{eq:layer-N6}\\
 \eta\sigma^2\bt^{\beta-2}\norm{u_x}_\infty \norm{\theta_x}_{L^2}\norm{\dot\theta_x}_{L^2}
 &\leq\varepsilon\eta\sigma^2\bt^{\beta-2}\norm{\dot\theta_x}_{L^2}^2
 +C_\varepsilon\eta(\sigma\norm{u_x}_\infty)^2
       \bt^{\beta-2}\norm{\theta_x}_{L^2}^2.          \label{eq:layer-N7}
\end{align}
Only \eqref{eq:layer-sizes}--\eqref{eq:layer-ux}, the boundary
Poincar\'e estimates in \eqref{eq:dot-poincare}, and $\rho\leq2M_{\rm in}$
are used here.  Thus no density lower bound is hidden in the table.

It remains to account for the derivative of the time weight.  On the set
where $\sigma'=1$,
\begin{align}
 \frac{\sigma'}{\bt}|\mathcal H_u|
 &\leq C\bigl(\bt^{\alpha-1}\norm{u_x}_{L^2}^2
                 +\bt^{\beta-2}\norm{\theta_x}_{L^2}^2\bigr)
        +C\bt^{-\alpha}\Ecal,
 \label{eq:layer-weight-u}\\
 \frac{\sigma'}{\bt^2}|\mathcal H_\theta|
 &\leq C\bt^{\beta-2}\norm{\theta_x}_{L^2}^2.       \label{eq:layer-weight-theta}
\end{align}
The derivative of the material weight is
\[
 \eta\sigma\sigma'(\bt^{-1}\norm{\sqrt\rho\dot u}_{L^2}^2+\bt^{-2}\norm{\sqrt\rho\dot\theta}_{L^2}^2),
\]
which is absorbed by the two first-identity dissipations after $\eta$ is
chosen sufficiently small.  The sources
\eqref{eq:layer-weight-u}--\eqref{eq:layer-weight-theta} are retained on the
right and are integrable by \eqref{eq:layer-basic}; no pointwise absorption
of them is used.  Combining
\eqref{eq:layer-I1}--\eqref{eq:layer-weight-theta}, choose first $\eta$,
then every Young parameter $\varepsilon$ smaller than a fixed multiple of
$\eta$, next $L$, and finally $\bt$.  This gives
{\small
\begin{equation}\label{eq:layer-differential}
 \mathcal W'\!+c\sigma\bigl[\bt^{-1}\norm{\sqrt\rho\dot u}_{L^2}^2+\bt^{-2}\norm{\sqrt\rho\dot\theta}_{L^2}^2
 +\eta\sigma\bigl(\bt^{\alpha-1}\norm{\dot u_x}_{L^2}^2+\bt^{\beta-2}\norm{\dot\theta_x}_{L^2}^2\bigr)\bigr]
 \leq C\bigl(\bt^{\alpha-1}\norm{u_x}_{L^2}^2+\bt^{\beta-2}\norm{\theta_x}_{L^2}^2+\bt^{-\alpha}\Ecal\bigr).
\end{equation}
}
Integration, \eqref{eq:layer-basic}, \eqref{eq:layer-equivalence}, and
\eqref{eq:layer-initial-value} prove the energy and dissipation parts of
\eqref{eq:layer-improvement}--\eqref{eq:layer-dissipation}, with a strict
improvement after $K_{\rm in}$ is fixed sufficiently large.

For the temperature, \eqref{eq:layer-B} and $\sigma\geq\tau$ give
\[
 \norm{\theta_x(t)}_{L^2}^2\leq C\sigma^{-1}\bt^{2-\beta}\leq C\bt.
\]
Since $\theta-\bt$ has zero boundary trace,
$\norm{\theta-\bt}_\infty\leq \norm{\theta_x}_{L^2}\leq C\bt^{1/2}$.  Increasing
$\Theta_{\rm in}$ improves the temperature bootstrap.

For the density, we first derive the logarithmic inequality needed for the
Zlotnik argument, rather than invoking its later post-layer formulation.
Along a positive-density particle path $X(t)$, put
$y(t)=\log\rho(X(t),t)$.  The continuity equation and
\eqref{eq:flux} give
\begin{equation*}
 y'=\frac{\dot\rho}{\rho}=-u_x=-\frac{F+R\rho\theta}{\mu\theta^\alpha}
 =-\frac{R}{\mu}\rho\theta^{1-\alpha}+\frac{1}{\mu}\theta^{-\alpha}\Pi
 -\frac{1}{\mu}\theta^{-\alpha}(F+\Pi).
\end{equation*}
We now estimate the three terms separately.  Since
$\bt/4\leq\theta\leq2\bt$ and $\alpha>2$,
\[
 -\frac{R}{\mu}\rho\theta^{1-\alpha}
 \leq-c_0\bt^{1-\alpha}\rho
 =-c_0\bt^{1-\alpha}e^y.
\]
Moreover, the definition \eqref{eq:Pi}, the same pointwise temperature
bounds, and $\int_0^1\rho\dd x=m$ imply
\begin{equation*}
 0\leq\Pi(t)=R\frac{\displaystyle\int_0^1\rho\theta^{1-\alpha}\dd x}
          {\displaystyle\int_0^1\theta^{-\alpha}\dd x}
 \leq C\frac{m\bt^{1-\alpha}}{\bt^{-\alpha}}\leq C m\bt.
\end{equation*}
and hence
\[
 \frac{1}{\mu}\theta^{-\alpha}\Pi(t)
 \leq C_0m\bt^{1-\alpha}.
\]
Finally, the flux-anchor estimate \eqref{eq:F-anchor} gives
\[
 \frac{1}{\mu}\theta^{-\alpha}|F+\Pi|
 \leq C_0m^{1/2}\bt^{-\alpha}
       \norm{\sqrt\rho\dot u(t)}_{L^2}.
\]
Combining these estimates proves, at this first point of use,
\begin{equation*}
 y'\leq-c_0\bt^{1-\alpha}e^y
 +C_0m\bt^{1-\alpha}
 +C_0m^{1/2}\bt^{-\alpha}
       \norm{\sqrt\rho\dot u(t)}_{L^2}.
\end{equation*}
The constants $c_0,C_0>0$ depend only on the displayed fixed parameters and
the pointwise temperature ratios; in particular, they are independent of
$\bt$ and of any positive lower bound for the density.

We next estimate the increment in the last term.  By
\eqref{eq:layer-B},
\begin{equation}\label{eq:layer-P-pointwise}
 \norm{\sqrt\rho\dot u(t)}_{L^2}\leq C_{K_{\rm in}}\bt^{1/2}\sigma(t)^{-1}.
\end{equation}
Hence, for $0\leq t_1<t_2\leq S$,
\begin{align}\label{eq:layer-P-increment}
 \bt^{-\alpha}\int_{t_1}^{t_2}\norm{\sqrt\rho\dot u(t)}_{L^2}\dd t
 &\leq C_{K_{\rm in}}\bt^{1/2-\alpha}
       \int_{t_1}^{t_2}\frac{\dd t}{\sigma(t)}\leq C_{K_{\rm in}}\bt^{1/2-\alpha}
       \bigl((\beta-1)\log\bt+1\bigr)=:N_0(\bt),
\end{align}
where we split the integral at $1-\tau$ if that point belongs to
$[t_1,t_2]$.  Thus $N_0(\bt)\to0$ because $\alpha>2$.  Combining the
logarithmic density inequality just proved with this increment estimate, in
Lemma~\ref{lem:zlotnik} we take
\[
 g_{\bt}(y)=-c_0\bt^{1-\alpha}e^y+C_0m\bt^{1-\alpha},
 \qquad N_1=0,
\]
and choose the fixed threshold $\zeta=\log(C_Zm)$.  After increasing
$\Theta_{\rm in}$ so that $N_0(\bt)\leq1$, the lemma gives
\[
 \rho(x,t)\leq e\max\{\norm{\rho_0}_\infty+1,C_Zm\},
\]
uniformly in the positive approximation.  By \eqref{eq:Min-choice}, this
strictly improves the density bootstrap and involves no circular choice.
Finally,
$\sigma(1)=1$ and \eqref{eq:layer-B} imply
\eqref{eq:layer-entry}.  The proof is complete.
\end{proof}

\subsection{High-temperature closure}

The pressure cross term in \eqref{eq:first-u} is centered by observing that
$\int_0^1u_x\dd x=0$:
\[
 R\int_0^1\rho\theta u_x\dd x
 =R\int_0^1(\rho\theta-m\bt)u_x\dd x.
\]
The pressure-oscillation estimate was proved in detail above, immediately
before \eqref{eq:layer-cross}, using only
\eqref{eq:entropy-coercivity}, the pointwise temperature bounds, and the
Dirichlet Poincar\'e inequality.  We record the same estimate here in the
form used for the post-layer argument:
\begin{equation}\label{eq:pressure-oscillation}
 R\norm{\rho\theta-m\bt}_{L^2}
 \leq C(\bt\Ecal)^{1/2}+C\norm{\theta-\bt}_{L^2}
 \leq C(\bt\Ecal)^{1/2}+C\norm{\theta_x}_{L^2}.
\end{equation}
Thus, under the pointwise bootstrap, for every
$\varepsilon>0$,
\begin{align}\label{eq:cross-control}
 R\bt^{-2}\left|\int_0^1(\rho\theta-m\bt)u_x\dd x\right|
 &\leq\varepsilon\bt^{\alpha-2}\norm{u_x}_{L^2}^2
 +C_\varepsilon\bt^{-\alpha-1}\Ecal
 +C_\varepsilon\bt^{-\alpha-2}\norm{\theta_x}_{L^2}^2 .
\end{align}
Choose fixed numbers $0<\eta\ll1\ll L$ and set
\begin{align}\label{eq:Y}
 \Ycal(t)={}&\bt^{-2}\Bigl[
 \frac\mu2\int_0^1\theta^\alpha u_x^2\dd x
 -R\int_0^1(\rho\theta-m\bt)u_x\dd x
 +\frac\kappa2\int_0^1\theta^\beta\theta_x^2\dd x
 \nonumber\\
 &\quad+\frac\eta2\bigl(\norm{\sqrt\rho\dot u}_{L^2}^2
                  +\norm{\sqrt\rho\dot\theta}_{L^2}^2\bigr)\Bigr]
 +L\bt^{-\alpha-1}\Ecal(t).
\end{align}
For $\eta$ sufficiently small and $L$ sufficiently large,
\begin{equation}\label{eq:Y-equivalence}
 c\Acal(t)+c\bt^{-\alpha-1}\Ecal(t)
 \leq\Ycal(t)
 \leq C\Acal(t)+C\bt^{-\alpha-1}\Ecal(t).
\end{equation}

We now isolate the nonlinear part of the post-layer argument.  The next
lemma is not a short-time existence statement and does not repeat the
initial-layer estimate of Proposition~\ref{prop:initial-layer}.  Its purpose
is to convert the first material identities and the material-derivative
inequalities into one differential inequality for $\Ycal$.  The terms that
depend on the energy bootstrap number $K$ acquire the small factor
$\bt^{-\gamma}$, whereas the remaining low-order dissipation has a
$K$-independent coefficient.  This separation is what permits the later
choice of the temperature threshold without a circular choice of $K$.

\begin{lemma}[Nonlinear bookkeeping]\label{lem:bookkeeping}
Let \eqref{eq:exponents} hold and fix $M,K\geq1$.  Put
\[
 \Dcal_0=\bt^{\alpha-1}\norm{u_x}_{L^2}^2+\bt^{\beta-2}\norm{\theta_x}_{L^2}^2,\qquad
 \Dcal_h=\bt^{-2}(\norm{\sqrt\rho\dot u}_{L^2}^2+\norm{\sqrt\rho\dot\theta}_{L^2}^2)
          +\bt^{\alpha-2}\norm{\dot u_x}_{L^2}^2+\bt^{\beta-2}\norm{\dot\theta_x}_{L^2}^2 .
\]
There are $C_0=C_0(M,R,\mu,\kappa,\alpha,\beta)$, an integer $N$, and
\[
 \gamma=\min\left\{\alpha-2,\frac{\beta}{2}-1,1\right\}>0
\]
such that the following pointwise-in-time assertion holds.  Let $J$ be any
time interval on which a smooth solution satisfies
\[
 0\leq\rho\leq M,\qquad
 \frac14\bt\leq\theta\leq2\bt,\qquad
 \Acal(t)\leq K,\qquad \bt\geq1.
\]
Then, for almost every $t\in J$,
\begin{align}\label{eq:closure-differential}
 \Ycal'(t)+c\Dcal_h(t)
 \leq C_0\Dcal_0(t)
 +C_0(1+K)^N\bt^{-\gamma}
       \bigl(\Dcal_0(t)+\Dcal_h(t)\bigr).
\end{align}
In particular, the coefficient $C_0$ of the principal low-order
dissipation is independent of the bootstrap number $K$.  No smallness
condition is imposed on the length of $J$; below we take $J=[1,T]$ with
arbitrary $T\geq1$.
\end{lemma}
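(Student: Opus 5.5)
The plan is to differentiate each part of $\Ycal$ and bound every resulting term. First, add $\bt^{-2}$ times \eqref{eq:first-u}, written with the centered pressure term as in \eqref{eq:cross-control}, and $\bt^{-2}$ times \eqref{eq:first-theta}. Next, add $\frac\eta2\bt^{-2}$ times the sum of the exact material identities \eqref{eq:exact-material-u-energy} and \eqref{eq:exact-material-temperature-energy}. Finally, add $L\bt^{-\alpha-1}$ times the entropy identity \eqref{eq:entropy-identity}.

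This yields four kinds of left-hand contributions:
\begin{itemize}
\item the good dissipations $\bt^{-2}(\norm{\sqrt\rho\dot u}_{L^2}^2+\norm{\sqrt\rho\dot\theta}_{L^2}^2)$;
\item $\eta\bt^{\alpha-2}\norm{\dot u_x}_{L^2}^2$;
\item $\eta\bt^{\beta-2}\norm{\dot\theta_x}_{L^2}^2$;
\item $L\bt^{-\alpha-1}\cdot\bt(\bt^{\alpha-1}\norm{u_x}_{L^2}^2+\bt^{\beta-2}\norm{\theta_x}_{L^2}^2)\gtrsim L\bt^{-\alpha}\Dcal_0$.
\end{itemize}
The last of these is small and unused. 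Every other term is put on the right and estimated under the bootstrap. The tools are the anchors \eqref{eq:ux-anchor} and \eqref{eq:thetax-infty}, the Poincaré bounds \eqref{eq:dot-poincare}–\eqref{eq:weighted-by-derivative}, and the size information implied by $\Acal\le K$:
\[
\norm{u_x}_{L^2}^2\le K\bt^{2-\alpha},\qquad \norm{\theta_x}_{L^2}^2\le K\bt^{2-\beta},\qquad \norm{\sqrt\rho\dot u}_{L^2}\le K^{1/2}\bt.
\]
These give $\norm{u_x}_{L^\infty}\le C(1+K)^{1/2}\bt^{1-\alpha}$ and $\norm{\theta-\bt}_{L^\infty}\le K^{1/2}\bt^{1-\beta/2}$.

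The terms split into two classes.

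\textbf{Linear terms.} These are quadratic with a $K$-independent bound, namely:
\begin{itemize}
\item $\bt^{-2}R\int\rho\dot\theta u_x$ and $\bt^{-2}R\int\rho\theta u_x^2$ in \eqref{eq:first-u};
\item $\bt^{-2}R\int\rho\theta u_x\dot\theta$ in \eqref{eq:first-theta};
\item in the material identities, $\eta\bt^{-2}R\int\rho\theta u_x\dot u_x$ and the mixed terms $\eta\bt^{-2}R\int\rho\dot\theta\dot u_x$ and $\eta\bt^{-2}R\int\rho\theta\dot u_x\dot\theta$.
\end{itemize}
The non-mixed terms are handled by Young's inequality against $\epsilon\Dcal_h$ plus $C_\epsilon\norm{u_x}_{L^2}^2\le C_\epsilon\bt^{1-\alpha}\Dcal_0$, which is harmless. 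For the mixed terms I use that $\rho\le M$ and $\theta\le 2\bt$ give $\eta\bt^{-1}\norm{\sqrt\rho\dot\theta}_{L^2}\norm{\dot u_x}_{L^2}$. Writing $\bt^{-1}\norm{\sqrt\rho\dot\theta}\cdot\bt^{(\alpha-2)/2}\norm{\dot u_x}\cdot\bt^{-(\alpha-2)/2}$ shows this is $O(\bt^{-(\alpha-2)/2})\Dcal_h$, which is small. Alternatively, the $\bt$-weighted cancellation \eqref{eq:layer-cancellation} can be used, but after the layer the plain scaling should suffice. The pressure cross term \eqref{eq:cross-control} is absorbed by the $\bt^{\alpha-2}\norm{u_x}^2$ part of $\Ycal$ only through the equivalence \eqref{eq:Y-equivalence}; in the derivative it appears as $\frac{\dd}{\dd t}$ and needs no estimate. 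Collecting these gives $C_0\Dcal_0$ with $C_0$ depending only on $M$ and the physical parameters, and not on $K$.

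\textbf{Nonlinear terms.} These are the cubic and higher terms:
\begin{itemize}
\item $\bt^{-2}\int\theta^\alpha u_x^3$ and $\bt^{-2}\int\theta^{\alpha-1}\dot\theta u_x^2$ from \eqref{eq:first-u};
\item $\bt^{-2}\int\theta^\alpha u_x^2\dot\theta$, $\bt^{-2}\int\theta^\beta u_x\theta_x^2$ and $\bt^{-2}\int\theta^{\beta-1}\dot\theta\theta_x^2$ from \eqref{eq:first-theta};
\item the remaining terms of \eqref{eq:material-u}–\eqref{eq:material-theta} multiplied by $\eta\bt^{-2}$.
\end{itemize}
Each is bounded by a product in which one factor is a $\Dcal_0$- or $\Dcal_h$-type norm squared and the leftover factors are controlled by $K$ times a negative power of $\bt$. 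Two typical cases:
\begin{itemize}
\item $\bt^{\alpha-2}\norm{u_x}_{L^\infty}\norm{u_x}_{L^2}^2\le C(1+K)^{1/2}\bt^{-1}\Dcal_0$;
\item $\eta\bt^{\beta-3}\norm{\theta_x}_{L^2}\norm{\dot\theta_x}_{L^2}^2\le K^{1/2}\bt^{-\beta/2}\Dcal_h$.
\end{itemize}
The term $\bt^{\beta-2}\norm{u_x}_{L^\infty}\norm{\theta_x}\norm{\dot\theta_x}$ is split by Young's inequality into $\bt^{\beta-2}\norm{\dot\theta_x}^2$ and $\bt^{\beta-2}\norm{u_x}_\infty^2\norm{\theta_x}^2$, giving a factor $(1+K)\bt^{2-2\alpha}$. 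Similarly, $\bt^{\alpha-1}\norm{u_x}^2\norm{\dot\theta_x}^2$-type terms contribute $K\bt^{\alpha-1+2-\alpha-\beta+2}=K\bt^{3-\beta}$. Checking every term of \eqref{eq:material-theta} this way, the worst powers are $\bt^{-(\alpha-2)}$ and $\bt^{-(\beta/2-1)}$, matching $\gamma$. The condition $\beta>2\alpha+4$ is where mixed terms such as $\bt^\alpha\norm{\dot\theta_x}\norm{u_x}\norm{\dot u_x}$ are needed: scaled, this is $\bt^{\alpha}\bt^{-(\alpha-2)/2}\bt^{-(\beta-2)/2}\bt^{(2-\alpha)/2}K^{1/2}\Dcal_h$, with exponent $(4-\beta)/2+\ldots$, which is negative.

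\textbf{Main obstacle.} I expect the main difficulty to be the bookkeeping of exponents for the thermal commutators in \eqref{eq:material-theta} paired with $\eta\bt^{-2}$. In particular, $\bt^{-1}\norm{\sqrt\rho\dot\theta}\norm{\dot u_x}$ and $\bt^{\alpha-2}\norm{\dot\theta_x}\norm{u_x}\norm{\dot u_x}$ must each land with a genuinely negative power. I would first tabulate each term as $\bt^{a}K^{b}\times(\text{two dissipation factors})$ and verify $a\le-\gamma$. I would then choose $\epsilon$, then $\eta$ small, then $L$ large, absorb the $\epsilon$-terms into $c\Dcal_h$, and bound the polynomial in $K$ by $(1+K)^N$.
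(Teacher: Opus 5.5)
Your plan is essentially the paper's proof, with the same functional $\Ycal$ and the same exhaustive term-by-term table under $\Acal\le K$ using \eqref{eq:ux-anchor} and \eqref{eq:dot-poincare}: you take $\bt^{-2}$ times \eqref{eq:first-u}--\eqref{eq:first-theta}, $\eta\bt^{-2}$ (not $\tfrac\eta2\bt^{-2}$) times the material-derivative identities, and $L\bt^{-\alpha-1}$ times \eqref{eq:entropy-identity}. A few adjustments are needed: bound the mixed term $\eta\bt^{-1}\norm{\sqrt\rho\dot\theta}_{L^2}\norm{\dot u_x}_{L^2}$ by Young's inequality, absorbing $\varepsilon\eta\bt^{\alpha-2}\norm{\dot u_x}_{L^2}^2$, so that the loss is $\bt^{2-\alpha}$ rather than your $\bt^{-(\alpha-2)/2}$, which is what the stated $\gamma$ requires; note that this lemma uses only $\alpha>2$ and $\beta>2$ (the term $\bt^{\alpha-2}\norm{\dot\theta_x}_{L^2}\norm{u_x}_{L^2}\norm{\dot u_x}_{L^2}$ costs $K^{1/2}\bt^{1-\beta/2}$, so $\beta>2\alpha+4$ plays no role here); and if you ever absorb an $\varepsilon\bt^{\beta-2}\norm{\dot\theta_x}_{L^2}^2$ arising from the unweighted first identities, $\varepsilon$ must be fixed after $\eta$, as the paper does, not before.
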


\begin{proof}
From \eqref{eq:A}, the bootstrap gives
\begin{equation}\label{eq:scaled-size}
 \norm{u_x}_{L^2}\leq K^{1/2}\bt^{1-\alpha/2},\quad
 \norm{\theta_x}_{L^2}\leq K^{1/2}\bt^{1-\beta/2},\quad
 \norm{\sqrt\rho\dot u}_{L^2}+\norm{\sqrt\rho\dot\theta}_{L^2}\leq2K^{1/2}\bt.
\end{equation}
Lemmas~\ref{lem:flux-anchor} and \ref{lem:thermal-anchor} then imply
\begin{equation}\label{eq:scaled-pointwise}
 \norm{u_x}_{L^\infty}\leq C_{M,K}\bt^{1-\alpha},\qquad
 \norm{\theta_x}_{L^\infty}\leq C_{M,K}\bt^{2-\beta},\qquad
 \norm{\theta_{xx}}_{L^2}\leq C_{M,K}\bt^{1-\beta}.
\end{equation}
Here $C_{M,K}$ depends, besides $M$ and $K$, only on
$R,\mu,\kappa,\alpha,\beta$, and is independent of $\inf\rho$.
We now list every nonlinear group from
\eqref{eq:first-u}--\eqref{eq:material-theta}.  Constants depending only on
$M$ and the physical parameters are suppressed.  For any fixed
$\varepsilon>0$, the four terms in
\eqref{eq:first-u}, after multiplication by $\bt^{-2}$, satisfy
\begin{align}
 \bt^{-2}\norm{\sqrt\rho\dot\theta}_{L^2}\norm{u_x}_{L^2}
 &\leq\varepsilon\bt^{-2}\norm{\sqrt\rho\dot\theta}_{L^2}^2
       +C_\varepsilon\bt^{-2}\norm{u_x}_{L^2}^2,
 \label{eq:book-I1}\\
 \bt^{-1}\norm{u_x}_{L^2}^2
 &\quad\text{is retained without further estimation},
 \label{eq:book-I2}\\
 \bt^{\alpha-2}\norm{u_x}_{L^\infty} \norm{u_x}_{L^2}^2
 &\leq\varepsilon\bt^{-2}\norm{\sqrt\rho\dot u}_{L^2}^2
       +C_\varepsilon\bigl(\bt^{-1}+K\bt^{-\alpha}\bigr)
          \norm{u_x}_{L^2}^2,\label{eq:book-I3}\\
 \bt^{\alpha-3}\norm{\dot\theta_x}_{L^2}\norm{u_x}_{L^2}^2
 &\leq\varepsilon\bt^{\beta-2}\norm{\dot\theta_x}_{L^2}^2
       +C_\varepsilon K\bt^{\alpha-\beta-2}\norm{u_x}_{L^2}^2.
\label{eq:book-I4}
\end{align}
Here \eqref{eq:book-I3} follows by inserting the two terms of
\eqref{eq:ux-anchor} before applying Young's inequality.  The four terms in
\eqref{eq:first-theta} obey
\begin{align}
 \bt^{\alpha-2}\norm{\dot\theta_x}_{L^2}\norm{u_x}_{L^2}^2
 &\leq\varepsilon\bt^{\beta-2}\norm{\dot\theta_x}_{L^2}^2
       +C_\varepsilon K\bt^{\alpha-\beta}\norm{u_x}_{L^2}^2,
\label{eq:book-J1}\\
 \bt^{-1}\norm{\sqrt\rho\dot\theta}_{L^2}\norm{u_x}_{L^2}
 &\leq\varepsilon\bt^{-2}\norm{\sqrt\rho\dot\theta}_{L^2}^2
       +C_\varepsilon\norm{u_x}_{L^2}^2,
\label{eq:book-J2}\\
 \bt^{\beta-2}\norm{u_x}_{L^\infty} \norm{\theta_x}_{L^2}^2
 &\leq\varepsilon\bt^{-2}\norm{\sqrt\rho\dot u}_{L^2}^2
       +C_\varepsilon\bigl(\bt^{\beta-\alpha-1}
          +K\bt^{\beta-2\alpha}\bigr)\norm{\theta_x}_{L^2}^2,
\label{eq:book-J3}\\
 \bt^{\beta-3}\norm{\dot\theta_x}_{L^2}\norm{\theta_x}_{L^2}^2
 &\leq\varepsilon\bt^{\beta-2}\norm{\dot\theta_x}_{L^2}^2
       +C_\varepsilon K\bt^{-2}\norm{\theta_x}_{L^2}^2.
\label{eq:book-J4}
\end{align}
The two pressure factors in these displays are harmless constants depending
on $M$.

After multiplication by $\eta\bt^{-2}$, the four terms on the right of
\eqref{eq:material-u} are bounded by
\begin{align}
 \bt^{-\alpha-2}\norm{\sqrt\rho\dot\theta}_{L^2}^2,\qquad
 \bt^{-\alpha}\norm{u_x}_{L^2}^2,\qquad
 \bt^{-\alpha}\norm{u_x}_{L^2}^2
   +K\bt^{-2\alpha}\norm{\sqrt\rho\dot u}_{L^2}^2,\qquad
 K\bt^{-2}\norm{\dot\theta_x}_{L^2}^2 .
\label{eq:book-M}
\end{align}
Finally, the eight scalar groups on the right of
\eqref{eq:material-theta}, in their displayed order, satisfy
\begin{align}
 \bt^{-2}\norm{u_x}_{L^\infty} \norm{\sqrt\rho\dot\theta}_{L^2}^2
 &\leq C(1+K^{1/2})\bt^{-\alpha-1}
       \norm{\sqrt\rho\dot\theta}_{L^2}^2,\label{eq:book-N1}\\
 \bt^{-1}\norm{\sqrt\rho\dot\theta}_{L^2}\norm{\dot u_x}_{L^2}
 &\leq\varepsilon\bt^{\alpha-2}\norm{\dot u_x}_{L^2}^2
       +C_\varepsilon\bt^{-\alpha}\norm{\sqrt\rho\dot\theta}_{L^2}^2,
\label{eq:book-N2a}\\
 \bt^{-1}\norm{\sqrt\rho\dot\theta}_{L^2}\norm{u_x}_{L^\infty} \norm{u_x}_{L^2}
 &\leq\varepsilon\bt^{-2}\norm{\sqrt\rho\dot\theta}_{L^2}^2
       +C_\varepsilon\bt^{2-2\alpha}\norm{u_x}_{L^2}^2
       +C_\varepsilon K\bt^{2-3\alpha}
          \norm{\sqrt\rho\dot u}_{L^2}^2,\label{eq:book-N2b}\\
 \bt^{\alpha-2}\norm{\dot\theta_x}_{L^2}\norm{u_x}_{L^\infty} \norm{u_x}_{L^2}^2
 &\leq\varepsilon\bt^{\beta-2}\norm{\dot\theta_x}_{L^2}^2
       +C_\varepsilon K\bt^{2-\alpha-\beta}\norm{u_x}_{L^2}^2
       +C_\varepsilon K^2\bt^{2-2\alpha-\beta}
          \norm{\sqrt\rho\dot u}_{L^2}^2,
\label{eq:book-N3}\\
 \bt^{\alpha-3}\norm{\dot\theta_x}_{L^2}^2\norm{u_x}_{L^2}^2
 &\leq K\bt^{-1}\norm{\dot\theta_x}_{L^2}^2,\label{eq:book-N4}\\
 \bt^{\alpha-2}\norm{\dot\theta_x}_{L^2}\norm{u_x}_{L^2}\norm{\dot u_x}_{L^2}
 &\leq C K^{1/2}\bigl(
       \bt^{\alpha-\beta/2-1}\norm{\dot u_x}_{L^2}^2
       +\bt^{\beta/2-1}\norm{\dot\theta_x}_{L^2}^2\bigr),
\label{eq:book-N5}\\
 \bt^{\beta-3}\norm{\theta_x}_{L^2}\norm{\dot\theta_x}_{L^2}^2
 &\leq C K^{1/2}\bt^{\beta/2-2}\norm{\dot\theta_x}_{L^2}^2,
 \label{eq:book-N6}\\
 \bt^{\beta-2}\norm{u_x}_{L^\infty} \norm{\theta_x}_{L^2}\norm{\dot\theta_x}_{L^2}
 &\leq\varepsilon\bt^{\beta-2}\norm{\dot\theta_x}_{L^2}^2
       +C_\varepsilon\bt^{\beta-2\alpha}\norm{\theta_x}_{L^2}^2
       +C_\varepsilon K\bt^{-2\alpha}\norm{\sqrt\rho\dot u}_{L^2}^2.
\label{eq:book-N7}
\end{align}
These are exhaustive: \eqref{eq:book-I1}--\eqref{eq:book-I4} correspond to
the four terms in \eqref{eq:first-u},
\eqref{eq:book-J1}--\eqref{eq:book-J4} to the four terms in
\eqref{eq:first-theta}, \eqref{eq:book-M} to
\eqref{eq:material-u}, and \eqref{eq:book-N1}--\eqref{eq:book-N7} to
\eqref{eq:material-theta}.  To illustrate the normalization, the pressure
 work in \eqref{eq:first-u} gives
\[
 R\bt^{-2}\left|\int_0^1\rho\theta u_x^2\dd x\right|
 \leq C\bt^{-1}\norm{u_x}_{L^2}^2,
\]
whereas the mixed material term gives
\begin{align*}
 \bt^{\alpha-2}\norm{\dot\theta_x}_{L^2}\norm{u_x}_{L^2}\norm{\dot u_x}_{L^2}
 &\leq C K^{1/2}\bt^{1-\beta/2}
 \left(\bt^{\alpha-2}\norm{\dot u_x}_{L^2}^2+\bt^{\beta-2}\norm{\dot\theta_x}_{L^2}^2\right).
\end{align*}
Thus the constants in the table are independent of $\inf\rho$.

The choices are made in the following order.  First fix $\eta>0$ in
\eqref{eq:Y} sufficiently small.  Next choose every Young parameter
$\varepsilon$ smaller than a fixed multiple of $\eta$, so that only
$\Dcal_h$ is absorbed.  Then choose $L$ in \eqref{eq:Y} to secure the
equivalence \eqref{eq:Y-equivalence}.  Every remaining power containing
$K$ is bounded by
$C_0(1+K)^N\bt^{-\gamma}$, whereas the $K$-independent low-order terms have
coefficient at most $C_0$.  Adding
$L\bt^{-\alpha-1}$ times \eqref{eq:entropy-identity} now gives
\eqref{eq:closure-differential}.  Only $\Dcal_h$ is absorbed; the
$\Dcal_0$ terms remain on the right and are integrated with the basic
entropy estimate.  The lower bound on $\bt$ is chosen only afterward in
Proposition~\ref{prop:uniform}.
\end{proof}

Proposition~\ref{prop:initial-layer} and
Lemma~\ref{lem:bookkeeping} have complementary roles.  The former controls
the possibly large compatibility traces on $[0,1]$ and supplies the entry
bound \eqref{eq:layer-entry} at $t=1$.  The latter is the post-layer
differential estimate, valid on an interval of arbitrary length.  The next
proposition combines that entry bound, the bookkeeping inequality, and the
basic relative-entropy dissipation to improve all bootstrap assumptions on
$[1,T]$, with constants independent of $T$.

\begin{proposition}[Post-layer vacuum-compatible estimate]\label{prop:uniform}
There exist $M_1,K_1,\Theta_1$, depending only on
$R,\mu,\kappa,\alpha,\beta$, $K_0$, $m_*$, and $m^*$, with the following
property.  Let a smooth positive-density solution exist on $[0,T]$, with
$T\geq1$, and suppose that the entry estimates furnished by
Proposition~\ref{prop:initial-layer} hold at $t=1$.  Suppose also, as
bootstrap assumptions on $[1,T]$, that
\begin{equation}\label{eq:bootstrap}
 \rho\leq2M_1,\qquad \frac14\bt\leq\theta\leq2\bt,\qquad
 \Acal\leq2K_1 .
\end{equation}
If $\bt\geq\Theta_1$, then on $[1,T]$
\begin{equation}\label{eq:uniform-prop}
 \rho\leq M_1,\qquad \frac12\bt\leq\theta\leq\frac32\bt,\qquad
 \sup_{1\leq t\leq T}\Acal(t)+\int_1^T\Dcal(t)\dd t\leq K_1.
\end{equation}
All constants are independent of $\inf\rho$ and of $T$.
\end{proposition}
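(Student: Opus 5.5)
We run a standard continuity/bootstrap argument on $[1,T]$, fixing the constants in the order $M_1$, then $K_1$, then $\Theta_1$, so that no choice is circular.

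\emph{Choice of $M_1$.} In analogy with \eqref{eq:Min-choice}, we take $M_1$ larger than a fixed multiple of $\max\{M_{\rm in},C_Zm^*,1\}$, where $M_{\rm in}$ bounds $\rho(\cdot,1)$ by Proposition~\ref{prop:initial-layer}. All constants of Lemmas~\ref{lem:flux-anchor}--\ref{lem:bookkeeping} are then evaluated with $M=2M_1$.

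\emph{Energy estimate.} Integrate the relative-entropy identity \eqref{eq:entropy-identity} from $1$ to $T$. Since $\Ecal(1)\le\Ecal(0)\le C_{\rm in}\bt$, this gives
\[
\int_1^T\Dcal_0\dd t\le C\bt^{-1}\Ecal(0)\cdot\bt^{0}\le C_{\rm in}',
\]
with $C_{\rm in}'$ independent of $K_1$ and $T$. The bound holds because $\bt\theta^{\alpha-1}\sim\bt^\alpha$, so $\int\bt^{\alpha}\norm{u_x}^2\le C\Ecal(0)\le C\bt$, and similarly for $\theta_x$. Next apply Lemma~\ref{lem:bookkeeping} with $K=2K_1$ on $J=[1,T]$ and integrate. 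Using \eqref{eq:Y-equivalence}, the entry bound \eqref{eq:layer-entry}, and $\bt^{-\alpha-1}\Ecal\le C$, we obtain
\[
c\sup_{[1,T]}\Acal+c\int_1^T\Dcal_h\le CK_{\rm in}+C+C_0C_{\rm in}'+C_0(1+2K_1)^N\bt^{-\gamma}\int_1^T(\Dcal_0+\Dcal_h).
\]
Choose $K_1$ larger than $2/c$ times the $K_1$-independent part. Then choose $\Theta_1$ so that $C_0(1+2K_1)^N\bt^{-\gamma}\le c/2$, which absorbs the $\Dcal_h$ part and makes the $\Dcal_0$ part bounded. Since $\Dcal\le\Dcal_0+\Dcal_h$, this yields $\sup\Acal+\int\Dcal\le K_1$.

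\emph{Temperature bound.} Use $\norm{\theta-\bt}_{L^\infty}\le\norm{\theta_x}_{L^2}\le K_1^{1/2}\bt^{1-\beta/2}$, which is at most $\bt/2$ once $\bt$ is large.

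\emph{Density bound.} Repeat the Zlotnik argument of Proposition~\ref{prop:initial-layer} along particle paths starting at $t=1$. The logarithmic inequality
\[
y'\le-c_0\bt^{1-\alpha}e^y+C_0m\bt^{1-\alpha}+C_0m^{1/2}\bt^{-\alpha}\norm{\sqrt\rho\dot u}_{L^2}
\]
holds as before. For the forcing increment, the new input is a bound uniform in $t_2-t_1$ (there is no finite-horizon bound now). Cauchy--Schwarz gives
\[
\bt^{-\alpha}\int_{t_1}^{t_2}\norm{\sqrt\rho\dot u}\le\bt^{-\alpha}(t_2-t_1)^{1/2}\Bigl(\bt^{2}\int\Dcal\Bigr)^{1/2}\le K_1^{1/2}\bt^{1-\alpha}(t_2-t_1)^{1/2}.
\]
Applying Young to $(t_2-t_1)^{1/2}$ gives $N_0=K_1^{1/2}\bt^{1-\alpha}/(2\epsilon)$ and $N_1=\epsilon K_1^{1/2}\bt^{1-\alpha}$. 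Take $\epsilon$ so that $N_1=C_0m\bt^{1-\alpha}$. Then $g(y)=-c_0\bt^{1-\alpha}e^y+C_0m\bt^{1-\alpha}\le-N_1$ for $e^y\ge 2C_0m/c_0$, i.e.\ $\zeta=\log(C_Zm)$ with suitable $C_Z$. With this choice $N_0=CK_1\bt^{1-\alpha}/m\le1$ for $\bt$ large. Lemma~\ref{lem:zlotnik} then gives $\rho\le e\max\{M_{\rm in},C_Zm\}\le M_1$.

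\emph{Main obstacle.} The obstacle is the uniformity in time of this Zlotnik step. The increment of the $\dot u$-forcing must be controlled linearly in $t_2-t_1$ with a rate $N_1$ comparable to, and dominated by, the damping $c_0\bt^{1-\alpha}e^{y}$. The estimate works only because both sides scale like $\bt^{1-\alpha}$ and $M_1$ was fixed before $K_1$. If the $K_1$ dependence leaked into $N_1$ in a way not absorbable through $\zeta$, we would instead enlarge $\zeta$ only by $\log(CK_1^{1/2}/c_0)$. That in turn would require fixing $M_1$ after $K_1$, which is permissible only if the constants of Lemma~\ref{lem:bookkeeping} are tracked to depend monotonically on $M$. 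So we would first try the ordering above.
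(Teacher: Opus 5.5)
Your proposal is correct and follows essentially the same route as the paper. You fix $M_1$ first from $M_{\rm in}$ and an $M$-independent Zlotnik threshold, then $K_1$ from the entry bound, the $\Ycal$-equivalence and the integrated entropy dissipation $\int_1^T\Dcal_0\,\dd t\le C$, then $\Theta_1$ to absorb $C_0(1+2K_1)^N\bt^{-\gamma}\Dcal_h$; the temperature is improved via $\norm{\theta-\bt}_{L^\infty}\le\norm{\theta_x}_{L^2}$ and the density via Zlotnik after Cauchy--Schwarz and Young on $(t_2-t_1)^{1/2}$. The only difference is cosmetic: you take $N_1=C_0m\bt^{1-\alpha}$ (so $\zeta=\log(2C_0m/c_0)$) where the paper takes $N_1=\tfrac{c_0}{4}\bt^{1-\alpha}$; either choice keeps $\zeta$ independent of $K_1$, so the fallback in your last paragraph is never needed.
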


\begin{proof}
Choose the constants in a triangular order.  First fix
\begin{equation}\label{eq:M1-choice}
 M_1=2e\max\{M_{\rm in}+2,C_Z(m^*+1),1\},
\end{equation}
where $C_Z$ is the fixed pressure constant in the threshold supplied by
Lemma~\ref{lem:zlotnik}.  It is important that $C_Z$ is independent of a
density bootstrap upper bound: the trajectory inequality used below follows
directly from \eqref{eq:flux}, \eqref{eq:F-anchor}, and the bound
$|\Pi|\leq C_{\alpha,R}m\bt$ proved in
Lemma~\ref{lem:flux-anchor}, none of which requires $\rho\leq M$.
Thus \eqref{eq:M1-choice} does not involve the $M$-dependent constants
$C_{\rm F}$ or $C_{\rm G}$.  After $M_1$ has been fixed, apply
Lemma~\ref{lem:bookkeeping} with density bootstrap number $2M_1$.
This fixes $C_{\rm F}(2M_1)$, $C_{\rm G}(2M_1)$, and hence $C_0$ and $N$,
before $K_1$ is chosen.  The order is therefore triangular and contains no
logical circle.

The entry bound \eqref{eq:layer-entry}, the equivalence
\eqref{eq:Y-equivalence}, and $\Ecal(1)\leq\Ecal(0)\leq C\bt$ imply
$\Ycal(1)\leq C_{\rm ent}$, where $C_{\rm ent}$ is fixed by the data class
and, in particular, dominates the equivalence constant times
$1+K_{\rm in}$.  By \eqref{eq:entropy-identity}, there is a fixed
$C_{\rm basic}$ such that, for every $T\geq1$,
\begin{equation}\label{eq:basic-integrated}
 \int_1^T\left(\bt^{\alpha-1}\norm{u_x}_{L^2}^2+
                    \bt^{\beta-2}\norm{\theta_x}_{L^2}^2\right)\dd t\leq C_{\rm basic}.
\end{equation}
Let $C_{\rm eq}$ dominate the fixed constants in
\eqref{eq:layer-entry}, \eqref{eq:Y-equivalence}, and in the integration of
\eqref{eq:closure-differential}, and set
\[
 C_*=2C_{\rm eq}
       \bigl(1+C_{\rm ent}+C_0C_{\rm basic}+K_{\rm in}\bigr).
\]
This number is independent of $T$ and, crucially, does not involve the
bootstrap number $K_1$.  Set
\[
 K_1=2\max\{K_0,C_*\}.
\]
Finally choose $\Theta_1$ so large that
\begin{equation}\label{eq:theta-choice}
 C_0(1+2K_1)^N\Theta_1^{-\gamma}
 \leq\min\{c/2,1\}.
\end{equation}
Only the high-order part of the last term of
\eqref{eq:closure-differential} is absorbed.  Its low-order part is
combined with \eqref{eq:basic-integrated}.  Integration, followed by
\eqref{eq:Y-equivalence}, gives
\begin{equation}\label{eq:A-D-pre}
 \sup_{1\leq t\leq T}\Acal(t)+\int_1^T\Dcal(t)\dd t
 \leq C_*\leq\frac12K_1.
\end{equation}
This improves the energy bootstrap without a circular choice of constants.

It remains to improve the density and temperature bootstrap.  The
logarithmic density inequality was derived in detail above in the proof of
Proposition~\ref{prop:initial-layer}, before
\eqref{eq:layer-P-pointwise}.  The same derivation uses only the continuity
equation, \eqref{eq:flux}, \eqref{eq:Pi}, \eqref{eq:F-anchor}, and the
pointwise temperature bounds, and therefore applies on the present
post-layer interval.  We record it in the form used here:
\begin{equation}\label{eq:logrho}
 y'\leq-c_0\bt^{1-\alpha}e^y
 +C_0m\bt^{1-\alpha}
 +C_0m^{1/2}\bt^{-\alpha}\norm{\sqrt\rho\dot u(t)}_{L^2}.
\end{equation}
Moreover, \eqref{eq:A-D-pre} gives
$\int_1^T \norm{\sqrt\rho\dot u}_{L^2}^2\dd t\leq C_*\bt^2$.  For
$1\leq t_1<t_2\leq T$ and $s=t_2-t_1$,
\begin{equation*}
 C\bt^{-\alpha}\int_{t_1}^{t_2}\norm{\sqrt\rho\dot u}_{L^2}\dd t
 \leq C C_*^{1/2}\bt^{1-\alpha}s^{1/2}\leq C C_*\bt^{1-\alpha}
   +\frac{c_0}{4}\bt^{1-\alpha}s .
\end{equation*}
Increase $\Theta_1$, after $M_1$ and $K_1$ have been fixed, so that
$C C_*\Theta_1^{1-\alpha}\leq1$.  Then
Lemma~\ref{lem:zlotnik}, applied to \eqref{eq:logrho}, yields
\[
 \rho(x,t)\leq e\max\{M_{\rm in}+1,C_Zm\}\leq\frac12M_1,
\]
uniformly for the positive approximations.
This improves the first condition in \eqref{eq:bootstrap}.

Finally, \eqref{eq:A-D-pre} and the equal boundary values of $\theta$ give
\[
 \norm{\theta(t)-\bt}_{L^\infty}\leq\norm{\theta_x(t)}_{L^2}\leq C_*^{1/2}\bt^{1-\beta/2}.
\]
Increase $\Theta_1$ once more so that this is at most $\bt/2$.  This improves the
temperature bootstrap and proves \eqref{eq:uniform-prop}.
\end{proof}

\begin{remark}\label{rem:no-rhox}
Neither Lemma~\ref{lem:bookkeeping} nor Proposition~\ref{prop:uniform}
contains $\norm{\rho_x}_{L^2}$.  This is the decisive difference from an
argument that writes $\rho=m+(\rho-m)$ inside a density-gradient energy and
then attempts to absorb the oscillation by $m$.  Such an absorption already
forces a positive density and cannot be used for genuine vacuum.
\end{remark}

\section{Finite-time higher-order estimates}
\label{sec:higher}

Uniform control of an unweighted density gradient is neither needed nor
true.  What is required for continuation is a bound on every finite time
interval.

The shifted weight never vanishes: $\sigma_{\bt}\geq\bt^{1-\beta}$.
For a current smooth positive-density solution, combine
\eqref{eq:layer-basic}, \eqref{eq:layer-improvement}, and
\eqref{eq:layer-dissipation} on $[0,\min\{1,T\}]$ with
\eqref{eq:uniform-prop} on $[1,T]$ when $T>1$.  It follows that, for each
fixed finite $T$,
\begin{equation}\label{eq:finite-layer-base}
 \sup_{0\leq t\leq T}(\norm{u_x}_{L^2}+\norm{\theta_x}_{L^2}+\norm{\sqrt\rho\dot u}_{L^2}+\norm{\sqrt\rho\dot\theta}_{L^2})
 +\int_0^T(\norm{\dot u_x}_{L^2}^2+\norm{\dot\theta_x}_{L^2}^2)\dd t\leq C_{T,\bt},
\end{equation}
where the constant may depend on $\bt$ but is independent of a positive
lower bound for the density.  This is exactly the type of finite-time input
used below.  In particular, the initial layer causes no loss in the vacuum
compactness or continuation argument.

The first continuation step is to propagate two spatial derivatives of the
density on each fixed finite interval.  This estimate is not claimed to be
uniform as $T\to\infty$; its role is to provide precisely the density
regularity needed by the elliptic recovery and the continuation criterion.

\begin{lemma}[Finite-time propagation of density derivatives]
\label{lem:density-high}
Let a smooth solution satisfy \eqref{eq:pointwise-main} and
\eqref{eq:finite-layer-base} on $[0,T]$.  Then
\begin{equation}\label{eq:density-H2}
 \sup_{0\leq t\leq T}\norm{\rho(t)}_{H^2}\leq C_T,
\end{equation}
where $C_T$ does not depend on a positive lower bound for $\rho$.
\end{lemma}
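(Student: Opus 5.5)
The plan is to propagate $\rho_x$ and then $\rho_{xx}$ by transport (Gronwall) estimates. The coefficients will be controlled through the effective viscous flux, so that no norm of $u_{xx}$ or $u_{xxx}$ has to be estimated independently of $\rho$. Write $F=\mu\theta^\alpha u_x-R\rho\theta$ with $F_x=\rho\dot u$. Then
\[
 u_{xx}=\bigl(\mu\theta^\alpha\bigr)^{-1}\bigl(\rho\dot u+R\rho_x\theta+R\rho\theta_x-\mu\alpha\theta^{\alpha-1}\theta_xu_x\bigr),
\]
and, since $\theta\geq\bt/2$, we get $\norm{u_{xx}}_{L^2}\leq C_{\bt}(\norm{\dot u}_{L^2}+\norm{\rho_x}_{L^2}+\norm{\theta_x}_{L^2}+\norm{\theta_x}_{L^\infty}\norm{u_x}_{L^2})$. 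Here $\norm{\rho\dot u}_{L^2}\leq M^{1/2}\norm{\sqrt\rho\dot u}_{L^2}$, or alternatively $\leq M\norm{\dot u}_{L^2}\leq CM\norm{\dot u_x}_{L^2}$ by \eqref{eq:dot-poincare}. For $\norm{u_x}_{L^\infty}$ we use Lemma~\ref{lem:flux-anchor}, which together with \eqref{eq:finite-layer-base} gives $\norm{u_x}_{L^\infty}\leq C_{T,\bt}$ on $[0,T]$. For $\theta$ we use Lemma~\ref{lem:thermal-anchor}, which bounds $\norm{\theta_x}_{L^\infty}$ and $\norm{\theta_{xx}}_{L^2}$ uniformly on $[0,T]$ by the same base quantities.

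\emph{Step 1 ($\rho_x$).} Differentiate the continuity equation: $\rho_{xt}+u\rho_{xx}+2u_x\rho_x+\rho u_{xx}=0$. Testing with $\rho_x$ and integrating by parts in the term $u\rho_{xx}\rho_x$ (the boundary term vanishes since $u=0$) gives
\[
 \tfrac{\dd}{\dd t}\norm{\rho_x}_{L^2}^2\leq C\norm{u_x}_{L^\infty}\norm{\rho_x}_{L^2}^2+CM\norm{u_{xx}}_{L^2}\norm{\rho_x}_{L^2}.
\]
Inserting the bound for $u_{xx}$ yields $y'\leq a(t)y+b(t)$ with $y=\norm{\rho_x}_{L^2}^2$, where $a,b\in L^1(0,T)$ because $\int_0^T\norm{\dot u_x}_{L^2}^2\,dt\leq C_{T,\bt}$. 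Gronwall's inequality and $\norm{\rho_0}_{H^2}\leq K_0$ then give $\sup_{[0,T]}\norm{\rho_x}_{L^2}\leq C_T$. As a consequence, $u\in L^\infty(0,T;H^2)$ up to the $L^2_t$ contribution of $\dot u$, and in any case $u_{xx}\in L^2(0,T;L^2)$ with $\int_0^T\norm{u_{xx}}_{L^\infty}\,dt$ still to be checked.

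\emph{Step 2 ($\rho_{xx}$).} Differentiate twice: $\rho_{xxt}+u\rho_{xxx}+3u_x\rho_{xx}+3u_{xx}\rho_x+\rho u_{xxx}=0$. Testing with $\rho_{xx}$ gives
\[
 \tfrac{\dd}{\dd t}\norm{\rho_{xx}}_{L^2}^2\leq C\norm{u_x}_{L^\infty}\norm{\rho_{xx}}_{L^2}^2+C\bigl(\norm{u_{xx}}_{L^\infty}\norm{\rho_x}_{L^2}+M\norm{u_{xxx}}_{L^2}\bigr)\norm{\rho_{xx}}_{L^2}.
\]
To control $u_{xxx}$, differentiate the expression for $u_{xx}$ once more. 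The most singular contributions are $(\rho\dot u)_x=\rho_x\dot u+\rho\dot u_x$ and $R\rho_{xx}\theta$, together with lower-order terms involving $\theta_{xx}$, $\theta_x^2$, $u_{xx}\theta_x$ and $u_x\theta_{xx}$. Using $\norm{\dot u}_{L^\infty}\leq C\norm{\dot u_x}_{L^2}$ and the bound on $\norm{\theta_{xx}}_{L^2}$ from Lemma~\ref{lem:thermal-anchor}, one obtains
\[
 \norm{u_{xxx}}_{L^2}\leq C_{\bt}\bigl(\norm{\rho_{xx}}_{L^2}+(1+\norm{\rho_x}_{L^2})\norm{\dot u_x}_{L^2}+\text{l.o.t.}\bigr).
\]
For $u_{xx}$ we use $\norm{u_{xx}}_{L^\infty}\leq C\norm{u_{xx}}_{H^1}$, which is bounded by the same quantity. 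The terms with $\norm{\rho_{xx}}_{L^2}$ are thus linear in $y_2=\norm{\rho_{xx}}_{L^2}^2$, and a second Gronwall argument with $L^1_t$ coefficients closes \eqref{eq:density-H2}.

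\emph{Main obstacle.} The delicate point is the lower-order terms in $u_{xxx}$ that involve $\theta_{xx}$ and $\theta_{xxx}$. These arise from differentiating $\theta^\alpha$ and from $R\rho\theta_x$, and they require $\theta_{xx}$ to be $L^2_tL^2_x$ and $\theta_x$ to be $L^2_tL^\infty$. Both facts follow from Lemma~\ref{lem:thermal-anchor} together with \eqref{eq:finite-layer-base}, since $\dot\theta\in L^2_tH^1$.

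It must also be verified that no step divides by $\rho$. Every material derivative is used only in one of two forms: either weighted, as $\sqrt\rho\dot u$, or through the boundary Poincaré estimate \eqref{eq:dot-poincare} applied to $\dot u_x$. Neither form requires a positive lower bound on $\rho$, so $C_T$ depends only on $T$, $\bt$ and the data, as claimed.
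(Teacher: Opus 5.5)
Your argument is correct and is essentially the paper's. Both run Gronwall estimates first for $\norm{\rho_x}_{L^2}^2$ and then for $\norm{\rho_{xx}}_{L^2}^2$, using the momentum equation to trade $u_{xx}$ and its $x$-derivative for $\rho_x$ (resp.\ $\rho_{xx}$), $\rho\dot u$ (resp.\ $\dot u_x$), and anchored temperature terms. The only difference is where the substitution happens: the paper inserts \eqref{eq:uxx-elliptic} into the differentiated continuity equation before testing, obtaining \eqref{eq:rhox-transport} with its degenerate damping and then differentiating that identity so that $u_{xxx}$ never appears, whereas you substitute at the level of the estimates.

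One harmless slip: differentiating the $u_{xx}$ identity once produces only $\theta_{xx}$, never $\theta_{xxx}$. Moreover, $\theta_{xx}$ is in fact bounded in $L^\infty(0,T;L^2)$ by Lemma~\ref{lem:thermal-anchor} and \eqref{eq:finite-layer-base}.
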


\begin{proof}
Differentiate $\dot\rho=-\rho u_x$ and eliminate $u_{xx}$ with the momentum
equation.  This gives the exact identity
\begin{align}\label{eq:rhox-transport}
 \Dt\rho_x+
 \left(2u_x+\frac R\mu\rho\theta^{1-\alpha}\right)\rho_x
={}&-\frac1\mu\rho^2\theta^{-\alpha}\dot u
 -\frac R\mu\rho^2\theta^{-\alpha}\theta_x
 +\alpha\rho\theta^{-1}\theta_xu_x .
\end{align}
For any $f\in H^1$ and $u|_{\partial\I}=0$,
\[
 \int_0^1(\Dt f)f\dd x
 =\frac12\frac{\dd}{\dd t}\norm{f}_{L^2}^2
  -\frac12\int_0^1u_xf^2\dd x.
\]
Thus, multiplication of \eqref{eq:rhox-transport} by $\rho_x$ gives the
positive, but degenerate, term
$\frac R\mu\int_0^1\rho\theta^{1-\alpha}\rho_x^2\dd x$.
The three terms on the
right are estimated without dividing by $\rho$:
\begin{align*}
 \left|\int_0^1\rho^2\theta^{-\alpha}\dot u\rho_x\dd x\right|
 &\leq C\norm{\dot u}_{L^2}\norm{\rho_x}_{L^2}
 \leq C \norm{\dot u_x}_{L^2}^2+C\norm{\rho_x}_{L^2}^2,\\
 \left|\int_0^1\rho^2\theta^{-\alpha}\theta_x\rho_x\dd x\right|
 &\leq C \norm{\theta_x}_{L^2}^2+C\norm{\rho_x}_{L^2}^2,\\
 \left|\int_0^1\rho\theta^{-1}\theta_xu_x\rho_x\dd x\right|
 &\leq C\norm{\theta_x}_{L^\infty} \norm{u_x}_{L^2}\norm{\rho_x}_{L^2}
 \leq C\norm{\theta_x}_{L^\infty}^2\norm{u_x}_{L^2}^2+C\norm{\rho_x}_{L^2}^2.
\end{align*}
Together with \eqref{eq:dot-poincare} and the uniform pointwise bounds,
these estimates imply
\begin{align}\label{eq:rhox-gronwall}
 \frac{\dd}{\dd t}\norm{\rho_x}_{L^2}^2
 +c\bt^{1-\alpha}\int_0^1\rho\rho_x^2\dd x
 \leq C(1+\norm{u_x}_{L^\infty})\norm{\rho_x}_{L^2}^2
 +C\bigl(\norm{\dot u_x}_{L^2}^2+\norm{\theta_x}_{L^2}^2+\norm{\theta_x}_{L^\infty}^2\norm{u_x}_{L^2}^2\bigr).
\end{align}
The last three terms are integrable on every finite interval by
\eqref{eq:finite-layer-base}.  Gronwall's inequality yields
\[
 \sup_{t\leq T}\norm{\rho_x(t)}_{L^2}\leq C_T.
\]
Notice that the positive dissipation in \eqref{eq:rhox-gronwall} is the
degenerate quantity $\int_0^1\rho\rho_x^2\dd x$, not
$\norm{\rho_x}_{L^2}^2$.

Put
\[
 A=2u_x+\frac R\mu\rho\theta^{1-\alpha},\qquad
 B=-\frac1\mu\rho^2\theta^{-\alpha}\dot u
 -\frac R\mu\rho^2\theta^{-\alpha}\theta_x
 +\alpha\rho\theta^{-1}\theta_xu_x .
\]
Differentiating \eqref{eq:rhox-transport} once more gives
\begin{equation}\label{eq:rhoxx-transport}
 \Dt\rho_{xx}+(A+u_x)\rho_{xx}=B_x-A_x\rho_x .
\end{equation}
No second derivative of $\rho$ occurs on the right.  Indeed,
\begin{align}
 A_x={}&2u_{xx}+\frac R\mu\theta^{1-\alpha}\rho_x
 +\frac R\mu(1-\alpha)\rho\theta^{-\alpha}\theta_x,
\label{eq:Ax}\\
 B_x={}&-\frac1\mu\left(
 2\rho\rho_x\theta^{-\alpha}\dot u
 -\alpha\rho^2\theta^{-\alpha-1}\theta_x\dot u
 +\rho^2\theta^{-\alpha}\dot u_x\right)\nonumber\\
&-\frac R\mu\left(
 2\rho\rho_x\theta^{-\alpha}\theta_x
 -\alpha\rho^2\theta^{-\alpha-1}\theta_x^2
 +\rho^2\theta^{-\alpha}\theta_{xx}\right)\nonumber\\
&+\alpha\left(
 \rho_x\theta^{-1}\theta_xu_x
 -\rho\theta^{-2}\theta_x^2u_x
 +\rho\theta^{-1}\theta_{xx}u_x
 +\rho\theta^{-1}\theta_xu_{xx}\right).
\label{eq:Bx}
\end{align}
The momentum equation further gives
\begin{equation}\label{eq:uxx-elliptic}
 \mu\theta^\alpha u_{xx}
 =\rho\dot u+R\theta\rho_x+R\rho\theta_x
  -\mu\alpha\theta^{\alpha-1}\theta_xu_x,
\end{equation}
so the already obtained $\rho_x$ estimate, \eqref{eq:dot-poincare}, and
\eqref{eq:finite-layer-base} give
\begin{equation}\label{eq:uxx-finite}
 \norm{u_{xx}}_{L^2}\leq C_T(1+\norm{\dot u_x}_{L^2}+\norm{\sqrt\rho\dot u}_{L^2}+\norm{\theta_x}_{L^2}).
\end{equation}
Put $Y(t)=1+\norm{\rho_{xx}(t)}_{L^2}^2$.  Using the one-dimensional inequality
\[
 \norm{\rho_x}_{L^\infty}^2
 \leq C\norm{\rho_x}_{L^2}
       \bigl(\norm{\rho_x}_{L^2}+\norm{\rho_{xx}}_{L^2}\bigr),
\]
the first-derivative bound yields $\norm{\rho_x}_{L^\infty}^2\leq C_TY$.
The products in \eqref{eq:Ax}--\eqref{eq:Bx} fall into the following
classes:
\begin{align*}
 \norm{\rho_x\dot u}_{L^2}^2
 &\leq\norm{\rho_x}_{L^2}^2\norm{\dot u}_{L^\infty}^2
 \leq C_T\norm{\dot u_x}_{L^2}^2,\\
 \norm{u_{xx}\rho_x}_{L^2}^2
 &\leq\norm{u_{xx}}_{L^2}^2\norm{\rho_x}_{L^\infty}^2
 \leq C_T\norm{u_{xx}}_{L^2}^2Y,\\
 \norm{\rho_x^2}_{L^2}^2
 &\leq\norm{\rho_x}_{L^\infty}^2\norm{\rho_x}_{L^2}^2
 \leq C_TY,\\
 \norm{\rho_x\theta_x}_{L^2}^2
 &\leq C_T\norm{\theta_x}_{L^\infty}^2,\\
 \norm{\theta_xu_{xx}}_{L^2}^2
 &\leq\norm{\theta_x}_{L^\infty}^2\norm{u_{xx}}_{L^2}^2,\\
 \norm{\theta_x^2}_{L^2}^2
 &\leq\norm{\theta_x}_{L^\infty}^2\norm{\theta_x}_{L^2}^2,\\
 \norm{\dot u_x}_{L^2}^2+\norm{\theta_{xx}}_{L^2}^2
 &=\norm{\dot u_x}_{L^2}^2+\norm{\theta_{xx}}_{L^2}^2.
\end{align*}
Every remaining factor in \eqref{eq:Ax}--\eqref{eq:Bx} is a bounded
coefficient multiplying one of these seven classes.  Consequently,
\begin{equation}\label{eq:AB-estimate}
 \norm{B_x-A_x\rho_x}_{L^2}^2
 \leq H_T(t)\bigl(1+\norm{\rho_{xx}}_{L^2}^2\bigr),
 \qquad H_T\in L^1(0,T).
\end{equation}
One admissible choice is
\[
 H_T(t)=C_T\left[1+\norm{\dot u_x}_{L^2}^2+\norm{\dot\theta_x}_{L^2}^2+\norm{u_{xx}}_{L^2}^2
 +\norm{\theta_{xx}}_{L^2}^2+\norm{u_x}_{L^\infty}^2
 +\norm{\theta_x}_{L^\infty}^4\right].
\]
Its integrability follows from \eqref{eq:finite-layer-base},
\eqref{eq:thetaxx}, and \eqref{eq:uxx-finite}.  Multiplication of
\eqref{eq:rhoxx-transport} by $\rho_{xx}$ and use of
\eqref{eq:AB-estimate} yield
\[
 \frac{\dd}{\dd t}\bigl(1+\norm{\rho_{xx}}_{L^2}^2\bigr)
 \leq C\bigl(1+\norm{u_x}_{L^\infty}+H_T(t)\bigr)
       \bigl(1+\norm{\rho_{xx}}_{L^2}^2\bigr).
\]
Another application of Gronwall proves \eqref{eq:density-H2}.
\end{proof}

Once the density derivatives have been propagated, the momentum and
temperature equations can be used as elliptic equations for $u$ and
$\theta$.  The next lemma recovers the spatial derivatives required for
continuation, with estimates that remain independent of the positive lower
bound of the density.

\begin{lemma}[Elliptic recovery]\label{lem:elliptic-recovery}
Under the assumptions of Lemma~\ref{lem:density-high},
\begin{align}\label{eq:elliptic-finite}
 &\sup_{0\leq t\leq T}
 \left(\norm{u(t)}_{H^2}+\norm{\theta(t)-\bt}_{H^2}\right)
 \leq C_T,\\
 &\int_0^T
 \left(\norm{u}_{H^3}^2+\norm{\theta-\bt}_{H^3}^2\right)\dd t
 \leq C_T.\nonumber
\end{align}
\end{lemma}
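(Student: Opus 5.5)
The plan is to treat the momentum and temperature equations as second-order elliptic equations in $x$ with coefficients $\mu\theta^\alpha$ and $\kappa\theta^\beta$, which lie between fixed multiples of $\bt^\alpha$ and $\bt^\beta$ by \eqref{eq:pointwise-main}. No division by $\rho$ occurs, because the source terms contain $\rho$ only as a multiplier. The required inputs are \eqref{eq:finite-layer-base}, the density bound \eqref{eq:density-H2} of Lemma~\ref{lem:density-high}, and the anchors of Lemmas~\ref{lem:flux-anchor}--\ref{lem:thermal-anchor}.

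\emph{$H^2$ bounds.} For $u$, identity \eqref{eq:uxx-elliptic} together with $\norm{\rho_x}_{L^2}\leq C_T$ gives \eqref{eq:uxx-finite}. That bound is only square-integrable in time, because it contains $\norm{\dot u_x}_{L^2}$. For the sup bound, I will instead use the $L^\infty$-in-time bound on $\norm{\sqrt\rho\dot u}_{L^2}$ from \eqref{eq:finite-layer-base}, through $\norm{\rho\dot u}_{L^2}\leq M^{1/2}\norm{\sqrt\rho\dot u}_{L^2}$. The term $\theta_xu_x$ is handled by the bounds $\norm{\theta_x}_{L^\infty}\leq C_T$ and $\norm{u_x}_{L^2}\leq C_T$, which hold uniformly on $[0,T]$ by \eqref{eq:thetax-infty} and \eqref{eq:finite-layer-base}. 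Together with Poincar\'e for $u\in H_0^1$, this gives $\sup_t\norm{u}_{H^2}\leq C_T$. For $\theta$, \eqref{eq:thetaxx} with $\norm{u_x}_{L^\infty}\leq C_T$ from \eqref{eq:ux-anchor} gives $\sup_t\norm{\theta_{xx}}_{L^2}\leq C_T$, and the Dirichlet Poincar\'e inequality then bounds $\norm{\theta-\bt}_{H^2}$.

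\emph{$H^3$ bounds.} I will differentiate \eqref{eq:uxx-elliptic} once in $x$:
\[
 \mu\theta^\alpha u_{xxx}=\rho_x\dot u+\rho\dot u_x+R\theta_x\rho_x+R\theta\rho_{xx}+R\rho\theta_{xx}+R\rho_x\theta_x-\mu\alpha\bigl(\theta^{\alpha-1}\theta_xu_x\bigr)_x-\mu\alpha\theta^{\alpha-1}\theta_xu_{xx}.
\]
Each term on the right is then estimated in $L^2$. The terms $\rho\dot u_x$ and $\rho_x\dot u$ are bounded by $C\norm{\rho}_{H^2}\norm{\dot u_x}_{L^2}$, using \eqref{eq:dot-poincare} and $\norm{\rho_x}_{L^\infty}\leq C\norm{\rho}_{H^2}$. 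The term $\theta\rho_{xx}$ is bounded by \eqref{eq:density-H2}. The remaining terms are products of quantities already bounded in $L^\infty$ or $L^2$ uniformly on $[0,T]$. This yields $\norm{u_{xxx}}_{L^2}^2\leq C_T(1+\norm{\dot u_x}_{L^2}^2)$, which is integrable in time by \eqref{eq:finite-layer-base}.

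For the temperature, I will differentiate $G_x=\rho\dot\theta+R\rho\theta u_x-\mu\theta^\alpha u_x^2$ from \eqref{eq:heat-flux}. This gives $G_{xx}=\rho_x\dot\theta+\rho\dot\theta_x+R(\rho\theta u_x)_x-\mu(\theta^\alpha u_x^2)_x$. The leading term is bounded by $C_T\norm{\dot\theta_x}_{L^2}$, and the rest by $C_T$ using $\norm{u}_{H^2},\norm{\theta}_{H^2}\leq C_T$. Then expanding $\theta_{xxx}$ from $G=\kappa\theta^\beta\theta_x$ introduces only lower-order products such as $\theta^{-1}\theta_x\theta_{xx}$ and $\theta^{-2}\theta_x^3$, all bounded on $[0,T]$. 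Hence $\norm{\theta_{xxx}}_{L^2}^2\leq C_T(1+\norm{\dot\theta_x}_{L^2}^2)$, which is again integrable.

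The main obstacle is ensuring that every coefficient remains independent of $\inf\rho$. This holds as long as the estimates use only $\rho\leq M$, the bound $\norm{\rho}_{H^2}\leq C_T$, and the boundary Poincar\'e estimates \eqref{eq:dot-poincare} for the unweighted $\dot u$ and $\dot\theta$. The delicate point is the split between the two types of bound: sup-in-time bounds must rely only on the weighted quantities $\sqrt\rho\dot u$ and $\sqrt\rho\dot\theta$, while $L^2$-in-time bounds may use $\dot u_x$ and $\dot\theta_x$. I would carry out this bookkeeping explicitly.
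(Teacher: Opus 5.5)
Your proposal is correct and follows the paper's proof essentially step by step. The paper also gets the sup-in-time $H^2$ bounds from \eqref{eq:uxx-elliptic} and the temperature elliptic identity. It uses only the weighted quantities $\sqrt\rho\dot u$, $\sqrt\rho\dot\theta$ and the anchors; your appeal to \eqref{eq:thetaxx} is the same identity in packaged form. It then obtains $\norm{u_{xxx}}_{L^2}+\norm{\theta_{xxx}}_{L^2}\leq C_T(1+\norm{\dot u_x}_{L^2}+\norm{\dot\theta_x}_{L^2})$ by differentiating both identities once, and integrates in time using \eqref{eq:finite-layer-base}. Your version differentiates $G$ for the temperature, which is only a cosmetic variation.
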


\begin{proof}
We first recover the second derivatives.  Formula
\eqref{eq:uxx-elliptic}, the bounds for $\rho$ and $\theta$, and
$\norm{\rho\dot u}_{L^2}\leq M^{1/2}
\norm{\sqrt\rho\dot u}_{L^2}$ give
\begin{align*}
 \norm{u_{xx}}_{L^2}
 \leq C_T\bigl(&\norm{\sqrt\rho\dot u}_{L^2}
 +\norm{\rho_x}_{L^2}+\norm{\theta_x}_{L^2}
 +\norm{\theta_x}_{L^\infty}\norm{u_x}_{L^2}\bigr).
\end{align*}
Lemma~\ref{lem:thermal-anchor}, Lemma~\ref{lem:density-high}, and
\eqref{eq:finite-layer-base} make the right-hand side uniformly bounded on
$[0,T]$.  Since $u=0$ at the endpoints, Poincar\'e's inequality now yields
$\sup_{t\leq T}\norm{u(t)}_{H^2}\leq C_T$.

Similarly, the temperature equation gives
\begin{equation}\label{eq:thetaxx-elliptic}
 \kappa\theta^\beta\theta_{xx}
 =\rho\dot\theta+R\rho\theta u_x-\mu\theta^\alpha u_x^2
  -\kappa\beta\theta^{\beta-1}\theta_x^2,
\end{equation}
and therefore
\begin{align*}
 \norm{\theta_{xx}}_{L^2}
 \leq C_T\bigl(&\norm{\sqrt\rho\dot\theta}_{L^2}
 +\norm{u_x}_{L^2}
 +\norm{u_x}_{L^\infty}\norm{u_x}_{L^2}
 +\norm{\theta_x}_{L^\infty}\norm{\theta_x}_{L^2}\bigr).
\end{align*}
The flux anchor \eqref{eq:ux-anchor}, the thermal anchor
\eqref{eq:thetax-infty}, and \eqref{eq:finite-layer-base} bound this
right-hand side uniformly.  Applying the Dirichlet Poincar\'e inequality to
$\theta-\bt$ proves
$\sup_{t\leq T}\norm{\theta(t)-\bt}_{H^2}\leq C_T$.

Differentiating the two elliptic identities gives
\begin{align}
 \mu\theta^\alpha u_{xxx}={}&
 \rho_x\dot u+\rho\dot u_x+R\theta\rho_{xx}
 +2R\rho_x\theta_x+R\rho\theta_{xx}
 -(\mu\theta^\alpha)_{xx}u_x
 -2(\mu\theta^\alpha)_xu_{xx},\label{eq:uxxx}\\
 \kappa\theta^\beta\theta_{xxx}={}&
 \rho_x\dot\theta+\rho\dot\theta_x+(R\rho\theta)_xu_x
 +R\rho\theta u_{xx}
 -(\mu\theta^\alpha)_xu_x^2-2\mu\theta^\alpha u_xu_{xx}
 -(\kappa\theta^\beta)_{xx}\theta_x
 -2(\kappa\theta^\beta)_x\theta_{xx}.
\label{eq:thetaxxx}
\end{align}
We estimate every class of terms in these identities.  First,
\begin{align*}
 \norm{\rho_x\dot u}_{L^2}
 &\leq\norm{\rho_x}_{L^2}\norm{\dot u}_{L^\infty}
 \leq C_T\norm{\dot u_x}_{L^2},
 &\norm{\rho\dot u_x}_{L^2}&\leq M\norm{\dot u_x}_{L^2},\\
 \norm{\rho_x\dot\theta}_{L^2}
 &\leq C_T\norm{\dot\theta_x}_{L^2},
 &\norm{\rho\dot\theta_x}_{L^2}&\leq M\norm{\dot\theta_x}_{L^2},
\end{align*}
where \eqref{eq:dot-poincare} is used in the first and third estimates.
The density terms satisfy
\begin{align*}
 \norm{\theta\rho_{xx}}_{L^2}
 +\norm{\rho_x\theta_x}_{L^2}
 +\norm{\rho\theta_{xx}}_{L^2}
 \leq C_T,
\end{align*}
because $\rho\in L^\infty(0,T;H^2)$ and
$\theta-\bt\in L^\infty(0,T;H^2)$.

For the variable coefficients, direct differentiation gives
\begin{align*}
 (\theta^\alpha)_x=\alpha\theta^{\alpha-1}\theta_x, \qquad (\theta^\alpha)_{xx}
 =\alpha\theta^{\alpha-1}\theta_{xx}
   +\alpha(\alpha-1)\theta^{\alpha-2}\theta_x^2,
\end{align*}
and the same formulas hold with $\alpha$ replaced by $\beta$.
Consequently, using $H^1(0,1)\hookrightarrow L^\infty(0,1)$,
\begin{align*}
 \norm{(\theta^\alpha)_{xx}u_x}_{L^2}
 &\leq C_T\bigl(\norm{\theta_{xx}}_{L^2}
    +\norm{\theta_x}_{L^\infty}\norm{\theta_x}_{L^2}\bigr)
       \norm{u_x}_{L^\infty}\leq C_T,\\
 \norm{(\theta^\alpha)_xu_{xx}}_{L^2}
 &\leq C_T\norm{\theta_x}_{L^\infty}\norm{u_{xx}}_{L^2}\leq C_T,\\
 \norm{(\theta^\beta)_{xx}\theta_x}_{L^2}
 +\norm{(\theta^\beta)_x\theta_{xx}}_{L^2}
 &\leq C_T\bigl(\norm{\theta_x}_{L^\infty}
       \norm{\theta_{xx}}_{L^2}
       +\norm{\theta_x}_{L^\infty}^2
        \norm{\theta_x}_{L^2}\bigr)\leq C_T.
\end{align*}
The remaining products in \eqref{eq:thetaxxx} obey
\begin{align*}
 \norm{(\rho\theta)_xu_x}_{L^2}
 +\norm{\rho\theta u_{xx}}_{L^2}
 &\leq C_T,\\
 \norm{(\theta^\alpha)_xu_x^2}_{L^2}
 +\norm{\theta^\alpha u_xu_{xx}}_{L^2}
 &\leq C_T,
\end{align*}
using the already established $H^2$ bounds for $u$ and $\theta$.
After division by the uniformly positive coefficients
$\mu\theta^\alpha$ and $\kappa\theta^\beta$, we therefore obtain
\begin{align*}
 \norm{u_{xxx}}_{L^2}
 +\norm{\theta_{xxx}}_{L^2}
 \leq C_T\bigl(1+\norm{\dot u_x}_{L^2}
                    +\norm{\dot\theta_x}_{L^2}\bigr).
\end{align*}
Squaring and integrating over $(0,T)$, then using
\eqref{eq:finite-layer-base}, proves the time-integrated $H^3$ estimate.
Every constant used above depends on the density only through its upper
bound and its $H^2$ norm; no density lower bound, $F_{xx}$ estimate, or
auxiliary-flux estimate is hidden in the argument.
\end{proof}

\begin{corollary}[Finite-time continuation bound]\label{cor:continuation}
For every $T<\infty$, a smooth positive-density solution satisfying
the pointwise bounds \eqref{eq:pointwise-main} and
\eqref{eq:finite-layer-base} obeys \eqref{eq:finite-high}, with a constant
independent of the density lower bound.
\end{corollary}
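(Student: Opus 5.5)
The corollary is essentially an assembly of Lemma~\ref{lem:density-high} and Lemma~\ref{lem:elliptic-recovery}. The plan is to check that every quantity in \eqref{eq:finite-high} is controlled by one of them, or by the base bound \eqref{eq:finite-layer-base}, with constants depending only on $T$, $\bt$, the physical parameters, $K_0$, $m_*$ and $m^*$.

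First, the hypotheses of Lemma~\ref{lem:density-high} are exactly \eqref{eq:pointwise-main} and \eqref{eq:finite-layer-base}. That lemma therefore gives $\sup_{t\le T}\norm{\rho(t)}_{H^2}\le C_T$. Lemma~\ref{lem:elliptic-recovery} has the same hypotheses, so it gives the $L^\infty(0,T;H^2)$ bounds for $u$ and $\theta-\bt$ together with the $L^2(0,T;H^3)$ bounds.

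Next, the material-derivative terms. For $\int_0^T(\norm{\dot u}_{H^1}^2+\norm{\dot\theta}_{H^1}^2)\dd t$, I would use the boundary Poincar\'e inequality \eqref{eq:dot-poincare}: it reduces the full $H^1$ norms to $\norm{\dot u_x}_{L^2}$ and $\norm{\dot\theta_x}_{L^2}$. Their time integrals are bounded in \eqref{eq:finite-layer-base}.

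The only point requiring care is the dependence of the constants. The constant $C_{T,\bt}$ in \eqref{eq:finite-layer-base} is built from Proposition~\ref{prop:initial-layer} and Proposition~\ref{prop:uniform}. In those propositions the factors $\sigma_{\bt}^{-1}\le\bt^{\beta-1}$ are absorbed into a $\bt$-dependent constant, and no lower bound for $\rho$ appears. Lemmas~\ref{lem:density-high} and~\ref{lem:elliptic-recovery} use the density only through $M$ and its $H^2$ norm; their Gronwall factors come from $\norm{u_x}_{L^\infty}$ and an $L^1(0,T)$ function $H_T$, both controlled by \eqref{eq:finite-layer-base} and the anchors in Lemmas~\ref{lem:flux-anchor}--\ref{lem:thermal-anchor}. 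Summing the three bounds gives \eqref{eq:finite-high} with $C_T$ independent of $\inf\rho$. I expect no real obstacle: the main risk is bookkeeping, namely checking that no estimate silently divides by $\rho$.
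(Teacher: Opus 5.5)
Your proposal is correct and matches the paper's proof: Lemmas~\ref{lem:density-high} and~\ref{lem:elliptic-recovery} give the $L^\infty(0,T;H^2)$ and $L^2(0,T;H^3)$ bounds, and \eqref{eq:dot-poincare} with \eqref{eq:finite-layer-base} controls $\int_0^T(\norm{\dot u}_{H^1}^2+\norm{\dot\theta}_{H^1}^2)\dd t$, all without using $\inf\rho$. The paper also records bounds on $\rho_t,u_t,\theta_t$ obtained from the equations, but only for completeness, since they are not part of \eqref{eq:finite-high}.
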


\begin{proof}
Lemmas~\ref{lem:density-high} and \ref{lem:elliptic-recovery} give
\begin{align*}
 &\sup_{0\leq t\leq T}
 \bigl(\norm{\rho(t)}_{H^2}+\norm{u(t)}_{H^2}
       +\norm{\theta(t)-\bt}_{H^2}\bigr)\leq C_T,\\
 &\int_0^T\bigl(\norm{u}_{H^3}^2
       +\norm{\theta-\bt}_{H^3}^2\bigr)\dd t\leq C_T.
\end{align*}
Moreover, \eqref{eq:dot-poincare} and
\eqref{eq:finite-layer-base} imply
\begin{align*}
 \int_0^T\bigl(\norm{\dot u}_{H^1}^2
                 +\norm{\dot\theta}_{H^1}^2\bigr)\dd t
 \leq C\int_0^T\bigl(\norm{\dot u_x}_{L^2}^2
                 +\norm{\dot\theta_x}_{L^2}^2\bigr)\dd t\leq C_T.
\end{align*}
These three bounds are precisely \eqref{eq:finite-high}.

For completeness, the time derivatives required by the strong-solution
class follow from
\[
 \rho_t=-u\rho_x-\rho u_x,\qquad
 u_t=\dot u-uu_x,\qquad
 \theta_t=\dot\theta-u\theta_x
\]
and the one-dimensional $H^1$ product inequality.  Indeed,
\begin{align*}
 \norm{\rho_t}_{H^1}
 &\leq C\norm{u}_{H^2}\norm{\rho}_{H^2},\\
 \norm{u_t}_{H^1}
 &\leq\norm{\dot u}_{H^1}+C\norm{u}_{H^2}^2,\\
 \norm{\theta_t}_{H^1}
 &\leq\norm{\dot\theta}_{H^1}
      +C\norm{u}_{H^2}\norm{\theta}_{H^2}.
\end{align*}
Hence $\rho_t\in L^\infty(0,T;H^1)$ and
$u_t,\theta_t\in L^2(0,T;H^1)$.  All constants come from the preceding
vacuum-uniform estimates and are therefore independent of the positive
lower bound for $\rho$.
\end{proof}

\section{Construction and continuation}
\label{sec:construction}

We first record the linear estimates used in the positive-density
construction.  They are stated in the precise Sobolev scale needed below so
that no H\"older-space local theorem is invoked implicitly.

\begin{lemma}[Transport and parabolic estimates]\label{lem:linear-local}
Let $0<\tau\leq1$.  Suppose
\[
 v\in L^\infty(0,\tau;H^2\cap H_0^1)\cap L^2(0,\tau;H^3),
 \qquad v_t\in L^2(0,\tau;H^1).
\]
Then the solution of
\[
 r_t+vr_x+rv_x=0,\qquad r|_{t=0}=r_0\in H^2,
\]
satisfies
\begin{align}
 \norm{r(t)}_{H^2}^2
 &\leq \norm{r_0}_{H^2}^2
 \exp\left(C\int_0^t\bigl(1+\norm{v(s)}_{H^3}\bigr)\dd s\right),
 \label{eq:linear-transport-H2}\\
 r(X_v(a,t),t)
 &=r_0(a)\exp\left(-\int_0^t v_x(X_v(a,s),s)\dd s\right),
 \label{eq:linear-transport-positive}
\end{align}
where $(X_v)_t=v(X_v,t)$.

Let, in addition, $q,A\in L^\infty(0,\tau;H^2)$,
$q_t,A_t\in L^2(0,\tau;H^1)$ and
\[
 0<q_*\leq q\leq q^*,\qquad 0<A_*\leq A\leq A^*.
\]
For the Dirichlet problem
\begin{equation}\label{eq:linear-parabolic}
 qw_t+qvw_x-(Aw_x)_x=f,\qquad
 w|_{\partial\I}=0,\qquad w|_{t=0}=w_0,
\end{equation}
assume $w_0\in H^2\cap H_0^1$, $f\in L^2(0,\tau;H^1)$,
$f_t\in L^2(0,\tau;H^{-1})$, and that the equation at $t=0$
defines $w_t(0)\in L^2$.  Then \eqref{eq:linear-parabolic} has a unique
solution satisfying
\begin{align}
 &w\in C([0,\tau];H^2\cap H_0^1)\cap L^2(0,\tau;H^3),\nonumber\\
 &w_t\in C([0,\tau];L^2)\cap L^2(0,\tau;H^1),
 \label{eq:linear-parabolic-regularity}
\end{align}
and
\begin{align}
 &\sup_{t\leq\tau}\left(\norm{w(t)}_{H^2}^2
                         +\norm{w_t(t)}_{L^2}^2\right)
 +\int_0^\tau\left(\norm{w}_{H^3}^2
                         +\norm{w_t}_{H^1}^2\right)\dd t
 \nonumber\\
 &\qquad\leq C_{\rm lin}\left[
  \norm{w_0}_{H^2}^2+\norm{w_t(0)}_{L^2}^2
  +\norm{f}_{L^2(0,\tau;H^1)}^2+\norm{f_t}_{L^2(0,\tau;H^{-1})}^2\right].
 \label{eq:linear-parabolic-estimate}
\end{align}
Here $C_{\rm lin}$ depends only on the displayed positive bounds and on the
coefficient norms, and is locally bounded as a function of those quantities.
\end{lemma}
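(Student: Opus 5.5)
The lemma has two independent parts, and I will treat them separately.

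\emph{Transport part.} I first assume $r_0$ and $v$ smooth and then pass to the general case by density, using only the displayed norms. The flow $X_v$ is well defined and bi-Lipschitz, since $v\in L^2(0,\tau;H^3)\subset L^1(0,\tau;W^{1,\infty})$. Along the flow the equation reads $\Dt r=-rv_x$, and integrating this scalar linear ODE gives the representation \eqref{eq:linear-transport-positive} directly. For \eqref{eq:linear-transport-H2} I differentiate the equation once and twice in $x$:
\[
 \Dt r_x+2v_xr_x=-rv_{xx},\qquad
 \Dt r_{xx}+3v_xr_{xx}=-3v_{xx}r_x-rv_{xxx}.
\]
I test these equations with $r$, $r_x$, $r_{xx}$ respectively and use $\int_0^1(\Dt f)f\dd x=\frac12\frac{\dd}{\dd t}\norm{f}_{L^2}^2-\frac12\int_0^1v_xf^2\dd x$, which holds because $v|_{\partial\I}=0$. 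The right-hand sides are bounded as follows:
\[
 \norm{v_x}_{L^\infty}+\norm{v_{xx}}_{L^\infty}\leq C\norm{v}_{H^3},\qquad
 \norm{rv_{xxx}}_{L^2}\leq\norm{r}_{L^\infty}\norm{v}_{H^3}\leq C\norm{r}_{H^1}\norm{v}_{H^3}.
\]
This yields $\frac{\dd}{\dd t}\norm{r}_{H^2}^2\leq C(1+\norm{v}_{H^3})\norm{r}_{H^2}^2$, and Gronwall gives the bound. For rough data I mollify $r_0$ and $v$, obtain the uniform bound, and pass to the limit weakly-$*$. Uniqueness follows from an $L^2$ energy estimate for the difference of two solutions.

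\emph{Parabolic part.} I use Galerkin approximation in the Dirichlet eigenbasis of $-\partial_x^2$ on $(0,1)$. The a priori estimates come in three layers.
\begin{enumerate}[label=\textup{(\roman*)},leftmargin=2.2em]
\item Test the equation with $w_t$. This controls $\sup_t\norm{w_x}_{L^2}^2$ and $\int_0^\tau\norm{\sqrt q\,w_t}_{L^2}^2\dd t$. The term $qvw_x$ is handled by Young's inequality, and $A_t$ appears through $\frac12\int_0^1A_tw_x^2\dd x$, which is bounded using $A_t\in L^2(0,\tau;H^1)\subset L^2(0,\tau;L^\infty)$.
\item Differentiate the equation in time and test with $w_t$:
\[
 \frac12\frac{\dd}{\dd t}\int_0^1qw_t^2\dd x+\int_0^1Aw_{tx}^2\dd x
 =\langle f_t,w_t\rangle-\int_0^1\Bigl(\tfrac12q_tw_t^2+(qv)_tw_xw_t+qvw_{tx}w_t+A_tw_xw_{tx}\Bigr)\dd x.
\]
Each term is bounded via $\norm{f_t}_{H^{-1}}\norm{w_t}_{H_0^1}$, the one-dimensional Gagliardo--Nirenberg inequality $\norm{w_t}_{L^\infty}^2\leq C\norm{w_t}_{L^2}\norm{w_{tx}}_{L^2}$, and $v_t\in L^2(0,\tau;H^1)$. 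The starting value $\norm{w_t(0)}_{L^2}$ is finite by the hypothesis that the equation defines $w_t(0)\in L^2$; I impose this at the Galerkin level through a projected compatible initial datum.
\item Recover higher spatial regularity from the equation written as an elliptic problem,
\[
 Aw_{xx}=qw_t+qvw_x-A_xw_x-f.
\]
This gives the $H^2$ bound pointwise in time from $w_t\in L^\infty(0,\tau;L^2)$. Differentiating once more gives the $L^2(0,\tau;H^3)$ bound from $w_t\in L^2(0,\tau;H^1)$, $f\in L^2(0,\tau;H^1)$, and $q,A\in L^\infty(0,\tau;H^2)$.
\end{enumerate}
Continuity in time follows from the Lions--Magenes interpolation ($w\in L^2H^3$, $w_t\in L^2H^1$ gives $C([0,\tau];H^2)$) and similarly for $w_t$. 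Uniqueness follows from the $L^2$ energy estimate for the difference. Throughout, the constant depends only on $q_*,q^*,A_*,A^*$ and the coefficient norms, and it is locally bounded in them because Gronwall's exponent is an integral of these quantities.

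\emph{Main obstacle.} The delicate step is layer (ii) combined with the passage to $t=0$: I must justify $w_t(0)$ at the Galerkin level and control the term $A_tw_xw_{tx}$. I would try first to bound it by
\[
 \norm{A_t}_{L^2}\norm{w_x}_{L^\infty}\norm{w_{tx}}_{L^2}\leq\norm{A_t}_{L^2}\norm{w}_{H^2}\norm{w_{tx}}_{L^2},
\]
and then close by coupling with the elliptic $H^2$ bound from layer (iii), which is itself controlled by $\norm{w_t}_{L^2}$ plus data.
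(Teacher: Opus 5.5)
Your proposal is correct and follows essentially the same route as the paper: energy estimates for the once- and twice-differentiated transport equation together with the characteristic ODE, then Galerkin approximation for the parabolic problem, testing by $w_t$, testing the time-differentiated equation by $w_t$, and elliptic recovery of $H^2$ and $H^3$ from $Aw_{xx}=qw_t+qvw_x-A_xw_x-f$. Your ``main obstacle'' is handled more directly in the paper, which bounds $\int_0^1A_tw_xw_{tx}\dd x$ by $\varepsilon\norm{w_{tx}}_{L^2}^2+C_\varepsilon\norm{A_t}_{L^\infty}^2\norm{w_x}_{L^2}^2$, using $A_t\in L^2(0,\tau;L^\infty)$ (which you already noted in layer (i)) and the $\sup_t\norm{w_x}_{L^2}$ bound from layer (i); so no coupling with the elliptic $H^2$ estimate at the Galerkin level is needed.
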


\begin{proof}
We first prove the transport estimate.  Multiplying the equation by $r$
and integrating the transport term by parts gives
\[
 \frac12\frac{\dd}{\dd t}\norm{r}_{L^2}^2
 +\frac12\int_0^1v_xr^2\dd x=0.
\]
The first and second differentiated equations are
\begin{align*}
 r_{xt}+vr_{xx}+2v_xr_x+v_{xx}r&=0,\\
 r_{xxt}+vr_{xxx}+3v_xr_{xx}+3v_{xx}r_x+v_{xxx}r&=0.
\end{align*}
Testing them by $r_x$ and $r_{xx}$, respectively, and integrating the
terms containing $v r_{xx}$ and $v r_{xxx}$ by parts, we obtain
\begin{align*}
 \frac12\frac{\dd}{\dd t}\norm{r_x}_{L^2}^2
 \leq C\norm{v}_{H^2}\norm{r}_{H^1}^2, \qquad
 \frac12\frac{\dd}{\dd t}\norm{r_{xx}}_{L^2}^2
 \leq C\norm{v}_{H^3}\norm{r}_{H^2}^2.
\end{align*}
Here, for example,
\begin{align*}
 \left|\int_0^1v_{xx}r_xr_{xx}\dd x\right|
 &\leq \norm{v_{xx}}_{L^2}\norm{r_x}_{L^\infty}
             \norm{r_{xx}}_{L^2},\\
 \left|\int_0^1v_{xxx}rr_{xx}\dd x\right|
 &\leq \norm{v_{xxx}}_{L^2}\norm r_{L^\infty}
             \norm{r_{xx}}_{L^2},
\end{align*}
and $H^1(\I)\hookrightarrow L^\infty(\I)$.  Adding the three
inequalities yields
\[
 \frac{\dd}{\dd t}\norm r_{H^2}^2
 \leq C\bigl(1+\norm v_{H^3}\bigr)\norm r_{H^2}^2,
\]
so Gronwall's inequality proves \eqref{eq:linear-transport-H2}.  Along the
flow $X_v$,
\[
 \frac{\dd}{\dd t}r(X_v(a,t),t)
 =-v_x(X_v(a,t),t)r(X_v(a,t),t),
\]
and solving this scalar ODE gives \eqref{eq:linear-transport-positive}.

We next give the parabolic estimates.  They may first be performed on a
Galerkin approximation.  Testing \eqref{eq:linear-parabolic} by $w_t$ and
using $w_t=0$ at the boundary gives the exact identity
\begin{align*}
 \int_0^1q|w_t|^2\dd x
 +\frac12\frac{\dd}{\dd t}\int_0^1A|w_x|^2\dd x
 =\int_0^1(f-qvw_x)w_t\dd x
  +\frac12\int_0^1A_t|w_x|^2\dd x.
\end{align*}
For any $\varepsilon>0$, the right-hand side is bounded by
\begin{align*}
 \varepsilon\norm{w_t}_{L^2}^2
 +C_\varepsilon\bigl(\norm f_{L^2}^2
       +\norm v_{L^\infty}^2\norm{w_x}_{L^2}^2\bigr)
 +\frac12\norm{A_t}_{L^\infty}\norm{w_x}_{L^2}^2.
\end{align*}
Since $A_t\in L^2H^1\hookrightarrow L^1L^\infty$ on the finite time
interval, choosing $\varepsilon<q_*/2$ and applying Gronwall's inequality
gives
\begin{equation}\label{eq:linear-first-energy}
 \sup_{t\leq\tau}\norm{w_x(t)}_{L^2}^2
 +\int_0^\tau\norm{w_t}_{L^2}^2\dd t
 \leq C_{\rm lin}\left(\norm{w_0}_{H^1}^2
                    +\norm f_{L^2(0,\tau;L^2)}^2\right).
\end{equation}

To expose the estimate for $w_t$, differentiate
\eqref{eq:linear-parabolic} in time.  The resulting equation is
\begin{align*}
 qw_{tt}+qv w_{tx}-(Aw_{tx})_x
 ={}& f_t-q_tw_t-(q_tv+qv_t)w_x+(A_tw_x)_x.
\end{align*}
This identity is justified at the Galerkin level, or equivalently by time
difference quotients.  Pair it with $w_t\in H_0^1$.  The principal terms
satisfy
\begin{align*}
 \int_0^1q w_{tt}w_t\dd x
 &=\frac12\frac{\dd}{\dd t}\int_0^1q|w_t|^2\dd x
   -\frac12\int_0^1q_t|w_t|^2\dd x,\\
 -\int_0^1(Aw_{tx})_xw_t\dd x
 &=\int_0^1A|w_{tx}|^2\dd x,
\end{align*}
whereas
\begin{align*}
 \left|\langle f_t,w_t\rangle\right|
 &\leq\varepsilon\norm{w_{tx}}_{L^2}^2
      +C_\varepsilon\norm{f_t}_{H^{-1}}^2,\\
 \left|\int_0^1A_tw_xw_{tx}\dd x\right|
 &\leq\varepsilon\norm{w_{tx}}_{L^2}^2
      +C_\varepsilon\norm{A_t}_{L^\infty}^2\norm{w_x}_{L^2}^2.
\end{align*}
The convection and coefficient-derivative terms are estimated in exactly
the same way, using
$q_t,v_t,A_t\in H^1\hookrightarrow L^\infty$ and the coefficient bounds;
more explicitly, their sum is at most
\begin{align*}
 \varepsilon\norm{w_{tx}}_{L^2}^2
 +C_\varepsilon\bigl(1+\norm{q_t}_{H^1}^2
     +\norm{v_t}_{H^1}^2+\norm{A_t}_{H^1}^2\bigr)
       \bigl(\norm{w_t}_{L^2}^2+\norm{w_x}_{L^2}^2\bigr).
\end{align*}
After choosing $\varepsilon$ so that these terms are absorbed by
$A_*\norm{w_{tx}}_{L^2}^2$, integration and Gronwall's inequality, together
with \eqref{eq:linear-first-energy}, yield
\begin{align*}
 \sup_{t\leq\tau}\norm{w_t(t)}_{L^2}^2
 +\int_0^\tau\norm{w_{tx}}_{L^2}^2\dd t
 \leq C_{\rm lin}\left[
  \norm{w_t(0)}_{L^2}^2+\norm{f}_{L^2(0,\tau;H^1)}^2
  +\norm{f_t}_{L^2(0,\tau;H^{-1})}^2+\norm{w_0}_{H^2}^2\right].
\end{align*}

It remains to recover the spatial derivatives without differentiating the
Dirichlet condition.  Put $h=qw_t+qvw_x-f$.  From
\[
 Aw_{xx}=h-A_xw_x
\]
and the estimates above, together with the standard implication
$f\in L^2H^1$, $f_t\in L^2H^{-1}\Rightarrow f\in C L^2$, we obtain
$w\in L^\infty H^2$.  Differentiating the same elliptic identity once gives
\[
 Aw_{xxx}=h_x-A_{xx}w_x-2A_xw_{xx}.
\]
Every term on the right belongs to $L^2(0,\tau;L^2)$: in particular,
$h_x$ is controlled by $w_{tx}$, $w_x$, $w_{xx}$, $f_x$, and the displayed
coefficient norms.  Division by $A\geq A_*$ therefore gives
$w\in L^2H^3$ and completes \eqref{eq:linear-parabolic-estimate}.

The estimates are uniform in the Galerkin dimension, so weak compactness
and the strong convergence of lower-order terms produce a solution.
The time-difference-quotient bounds give
$w_t\in C([0,\tau];L^2)$; the elliptic identity then gives
$w\in C([0,\tau];H^2)$.  Finally, applying
\eqref{eq:linear-first-energy} to the difference of two solutions with zero
initial data gives uniqueness.  This proves
\eqref{eq:linear-parabolic-regularity}--\eqref{eq:linear-parabolic-estimate}.
\end{proof}

With the transport and parabolic estimates available, we can now construct
the nonlinear solution for strictly positive approximate densities.  The
next lemma also identifies the continuation criterion that will later be
verified uniformly in the approximation parameter by the a priori
estimates.

\begin{lemma}[Positive-density local solution and continuation]
\label{lem:positive-local}
Suppose the data satisfy \eqref{eq:data-dynamic},
$\rho_0\in H^2$, $\rho_0\geq\delta>0$, and $\theta_0\geq c_0>0$, and suppose
there exist $g_1,g_2\in L^2(0,1)$ for which \eqref{eq:compatibility} holds.
Then a unique strong solution exists on a nontrivial interval.  It can be
continued past $T<\infty$ provided
\[
 \inf_{[0,1]\times[0,T]}\rho>0,\qquad
 \inf_{[0,1]\times[0,T]}\theta>0,
\]
and the norms in \eqref{eq:finite-high} are finite.
\end{lemma}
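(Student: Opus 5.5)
I plan a contraction (or Schauder-type) fixed-point argument built on Lemma~\ref{lem:linear-local}, followed by a standard continuation argument.

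\emph{Local existence.} Fix $\tau\in(0,1]$ small and a closed set $\mathcal X_\tau$ of pairs $(v,\vartheta)$ with $v,\vartheta-\bt\in L^\infty(0,\tau;H^2\cap H_0^1)\cap L^2(0,\tau;H^3)$ and $v_t,\vartheta_t\in L^2(0,\tau;H^1)$. The pairs must have the prescribed initial values and obey the bounds
\[
 \sup_t(\norm{v}_{H^2}^2+\norm{v_t}_{L^2}^2)+\int_0^\tau(\norm v_{H^3}^2+\norm{v_t}_{H^1}^2)\dd t\le N,
\]
and similarly for $\vartheta$, together with $\vartheta\ge c_0/2$.

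Given such a pair, first solve the transport equation for $r$ with velocity $v$. Then \eqref{eq:linear-transport-H2} bounds $\norm r_{H^2}$, and \eqref{eq:linear-transport-positive} gives
\[
 r\ge\delta\exp\Bigl(-\int_0^\tau\norm{v_x}_{L^\infty}\Bigr)\ge\delta/2
\]
for $\tau$ small.

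Next solve the two linear parabolic problems \eqref{eq:linear-parabolic}. For the momentum equation take $q=r$, $A=\mu\vartheta^\alpha$, $f=-(Rr\vartheta)_x$, and unknown $w=u$. For the temperature equation take $q=r$, $A=\kappa\vartheta^\beta$, $f=\mu\vartheta^\alpha v_x^2-Rr\vartheta v_x$, and unknown $\theta-\bt$. The hypothesis that $w_t(0)\in L^2$ follows from \eqref{eq:compatibility}: at $t=0$ we have $w_t(0)=\rho_0^{-1/2}g_i-u_0w_{0x}$, and $\rho_0\ge\delta$ makes this lie in $L^2$.

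\emph{Self-map and contraction.} Estimate \eqref{eq:linear-parabolic-estimate} then maps $\mathcal X_\tau$ into itself when $N$ is chosen large relative to the data and $\tau$ small. This works because the right-hand sides $f$ carry factors that are small in $L^2(0,\tau)$. The temperature lower bound is kept since $\norm{\theta-\theta_0}_{L^\infty}\le C\tau^{1/2}N^{1/2}$.

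Contraction is proved in the weaker norm $L^\infty(0,\tau;H^1)\cap L^2(0,\tau;H^2)$ for the velocity and temperature, and $L^\infty L^2$ for the density. Differencing the linear equations and using the uniform lower bounds $r\ge\delta/2$ and $\vartheta\ge c_0/2$, one gets a factor $C\tau^{1/2}$. The fixed point has the regularity of Definition~\ref{def:strong}, by weak-star lower semicontinuity of the bounds. Uniqueness follows from the same difference estimate together with Gronwall, which is valid since the density is strictly positive.

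\emph{Main obstacle and continuation.} The main obstacle is closing the self-map without losing a derivative. The source $f_t$ contains $r_t\vartheta_x$ and $r\vartheta_{xt}$, and these must be placed in $L^2H^{-1}$ using only $r_t\in L^\infty H^1$ and $\vartheta_t\in L^2H^1$. Likewise, the coefficient norms $q_t=r_t$ and $A_t$ enter $C_{\rm lin}$. I expect this to work because $C_{\rm lin}$ is locally bounded and every bad term comes with a power of $\tau$. For the temperature source, the term $v_x^2$ needs $v\in L^\infty H^2$, which controls $\norm{v_x}_{L^\infty}$.

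For continuation, the local existence time depends only on $\delta'$, $c_0'$, and the $H^2$ and compatibility norms at the restart time. The positive lower bounds together with finiteness of \eqref{eq:finite-high} provide uniform control of these quantities on $[0,T]$. In particular, $\norm{\sqrt\rho\dot u(t)}_{L^2}$ is controlled via the equation, and at a restart time $t_0$ the compatibility condition holds with $g_1=\sqrt\rho\,\dot u(t_0)$. Restarting at $T-\epsilon$ with a uniform existence time then extends the solution past $T$.
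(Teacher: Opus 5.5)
Your architecture is the paper's: a Picard/contraction scheme built on Lemma~\ref{lem:linear-local}, with these ingredients:
\begin{itemize}
\item the density transported by the previous velocity, and frozen-coefficient Dirichlet problems for $u$ and $\theta-\bt$;
\item the characteristic formula for $r\geq\delta/2$, and the $\tau^{1/2}$ argument for $\theta\geq c_0/2$;
\item the compatibility conditions furnishing $w_t(0)\in L^2$;
\item continuation by recovering $g_1=\sqrt\rho\,\dot u$ and $g_2=\sqrt\rho\,\dot\theta$ from the equations at the restart time. Restarting at $T-\epsilon$ rather than at $T$ is immaterial.
\end{itemize}

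The step that fails as written is your contraction norm. Take two inputs $(v^i,\vartheta^i)$, with densities $r^i$ and outputs $(u^i,\theta^i)$. You measure $(Z_u,Z_\theta)$ in $L^\infty(0,\tau;H^1)\cap L^2(0,\tau;H^2)$, but the density difference $Z_r=r^1-r^2$ only in $L^\infty(0,\tau;L^2)$. Since $f=-(Rr\vartheta)_x$, the momentum difference carries the source $-R(Z_r\vartheta^1+r^2Z_\vartheta)_x$. Its elliptic identity therefore reads $\mu(\vartheta^1)^\alpha Z_{u,xx}=R\vartheta^1Z_{r,x}+r^1Z_{u,t}+\cdots$. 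The velocity part of your norm is obtained by testing with $Z_{u,t}$ and then using this identity, so it needs $Z_{r,x}\in L^2(0,\tau;L^2)$. The bound $\norm{Z_r}_{L^\infty L^2}$ does not provide this, so the estimate is off by exactly one derivative of the density.

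There are two consistent repairs.
\begin{itemize}
\item \emph{Keep your norm for $(Z_u,Z_\theta)$ and estimate $Z_r$ in $L^\infty(0,\tau;H^1)$.} Differentiating $Z_{r,t}+(v^1Z_r)_x=-(r^2Z_v)_x$ once produces the term $r^2Z_{v,xx}$. This is admissible because your norm contains $Z_v\in L^2(0,\tau;H^2)$ and $r^2\in L^\infty H^2$. It gives $\sup_t\norm{Z_r}_{H^1}\leq C\tau^{1/2}\norm{Z_v}_{L^2(0,\tau;H^2)}$. After that, every source in the two parabolic difference equations lies in $L^2(0,\tau;L^2)$ with a factor $\tau^{1/2}$.
\item \emph{Follow the paper and lower the norm for $u,\theta$ by one derivative.} Contract in $\sup_t\norm{(Z_\rho,Z_u,Z_\theta)}_{L^2}^2+\int_0^\tau\norm{(Z_{u,x},Z_{\theta,x})}_{L^2}^2\dd t$. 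Integrate the divergence-form sources $R(Z_\rho\theta^n+\rho^nZ_\theta)_x$ and $\mu\bigl([(\theta^n)^\alpha-(\theta^{n-1})^\alpha]u^n_x\bigr)_x$ by parts against $Z^{n+1}_{u,x}$, as in \eqref{eq:local-density-difference-energy}--\eqref{eq:local-contraction}.
\end{itemize}
With either choice, the rest of your argument goes through.
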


\begin{proof}
We use a Picard construction based on Lemma~\ref{lem:linear-local}; its
structure is the positive-density iteration introduced in the classical
compressible-flow theory~\cite{Tani1977}, but the estimates below are in the
Sobolev spaces of Definition~\ref{def:strong}.  Set
$\vartheta=\theta-\bt$.  Starting from iterates having the same initial and
Dirichlet data, define
$\rho^{n+1}$ by
\[
 \rho^{n+1}_t+u^n\rho^{n+1}_x+\rho^{n+1}u_x^n=0
\]
and solve the two linear parabolic Dirichlet problems
\begin{align*}
 \rho^{n+1}(u^{n+1}_t+u^n u^{n+1}_x)
 -\partial_x\!\left(\mu(\theta^n)^\alpha u^{n+1}_x\right)
 &=-\partial_x(R\rho^{n+1}\theta^n),\\
 \rho^{n+1}(\theta^{n+1}_t+u^n\theta^{n+1}_x)
 -\partial_x\!\left(\kappa(\theta^n)^\beta\theta^{n+1}_x\right)
 &=-R\rho^{n+1}\theta^n u_x^n
   +\mu(\theta^n)^\alpha(u_x^n)^2 .
\end{align*}
The powers of $\theta^n$ in these two equations are frozen coefficients.
The characteristic formula is
\begin{equation}\label{eq:local-density-formula}
 \rho^{n+1}(X^n(a,t),t)=\rho_0(a)
 \exp\left(-\int_0^t u_x^n(X^n(a,s),s)\dd s\right).
\end{equation}
Thus every iterate satisfies $\rho^{n+1}\geq\delta/2$ after $\tau$ is
chosen so that
$\int_0^\tau\norm{u_x^n}_{L^\infty}\dd t\leq\log2$.
Use the norm
\begin{align*}
 \norm{(\rho,u,\vartheta)}_{\mathcal X_\tau}^2={}&
 \sup_{t\leq\tau}\norm{(\rho,u,\theta-\bt)(t)}_{H^2}^2
 +\sup_{t\leq\tau}\left[
   \norm{\rho_t(t)}_{H^1}^2+\norm{u_t(t)}_{L^2}^2
   +\norm{\vartheta_t(t)}_{L^2}^2\right]\\
 &+\int_0^\tau\left[
  \norm{(u,\vartheta)}_{H^3}^2
 +\norm{(u_t,\vartheta_t)}_{H^1}^2\right]\dd t .
\end{align*}
Let $B_L\subset\mathcal X_\tau$ be the closed ball determined by this norm,
the two Dirichlet conditions, and
$\rho\geq\delta/2$, $\theta\geq c_0/2$.  On $B_L$ the density multiplier and
the two diffusion coefficients are uniformly positive.  The product laws
in $H^1$ and $H^2$ show that the two parabolic right-hand sides belong to
$L^2H^1$ and their time derivatives to $L^2H^{-1}$.  Applying
Lemma~\ref{lem:linear-local} to the transport equation and then to the two
Dirichlet equations gives
\begin{equation}\label{eq:local-iteration}
 \norm{(\rho^{n+1},u^{n+1},\theta^{n+1}-\bt)}_{\mathcal X_\tau}^2
 \leq C_{\delta,c_0}(\text{data})
 +C_{\delta,c_0}\tau^{1/2}(1+L)^3 .
\end{equation}
Here the parabolic estimate is applied with homogeneous boundary data for
$u$ and $\vartheta$; it does not rely on discarding boundary terms after two
spatial differentiations.
Furthermore,
\[
 \sup_{t\leq\tau}\norm{\theta^{n+1}(t)-\theta_0}_{L^\infty}
 \leq \tau^{1/2}\norm{\theta_t^{n+1}}_{L^2(0,\tau;H^1)}.
\]
After $L$ is fixed, reducing $\tau$ makes this quantity at most $c_0/2$;
hence the temperature lower bound is also invariant under the iteration.
The compatibility conditions give the $L^2$ traces
$\sqrt{\rho_0}\dot u(0)=g_1$ and
$\sqrt{\rho_0}\dot\theta(0)=g_2$; since $\rho_0\geq\delta$, these are the
initial time-derivative quantities in
\eqref{eq:linear-parabolic-estimate}.  Choosing first
$L=2C_{\delta,c_0}(\text{data})$ and then $\tau$ so that the second term in
\eqref{eq:local-iteration} is at most $L/2$ makes the iteration invariant.
For completeness, subtract the equations for two consecutive iterates and
put
\begin{align*}
 d_n:={}&\sup_{0\leq t\leq\tau}
 \bigl(\norm{Z_\rho^n(t)}_{L^2}^2+\norm{Z_u^n(t)}_{L^2}^2
       +\norm{Z_\theta^n(t)}_{L^2}^2\bigr)
 +\int_0^\tau\bigl(\norm{Z_{u,x}^n}_{L^2}^2
                         +\norm{Z_{\theta,x}^n}_{L^2}^2\bigr)\dd t,
\end{align*}
where $Z^n$ is the difference of the $n$th and $(n-1)$st triples.
The density difference satisfies the explicit equation
\begin{equation}\label{eq:local-density-difference}
 Z_{\rho,t}^{n+1}+u^nZ_{\rho,x}^{n+1}+u_x^nZ_\rho^{n+1}
 =-Z_u^n\rho_x^n-\rho^nZ_{u,x}^n,
 \qquad Z_\rho^{n+1}(0)=0.
\end{equation}
Testing by $Z_\rho^{n+1}$, integrating the transport term by parts, and
using $H^1\hookrightarrow L^\infty$ gives, for every $\varepsilon>0$,
\begin{align}
 \frac{\dd}{\dd t}\norm{Z_\rho^{n+1}}_{L^2}^2
 \leq{}& C_L\norm{Z_\rho^{n+1}}_{L^2}^2
 +\varepsilon\norm{Z_{u,x}^n}_{L^2}^2
 +C_{L,\varepsilon}\norm{Z_u^n}_{L^\infty}^2.
 \label{eq:local-density-difference-energy}
\end{align}

The momentum difference has the same positive principal part as the
$(n+1)$st equation and can be written as
\begin{align*}
 &\rho^{n+1}\bigl(Z_{u,t}^{n+1}+u^nZ_{u,x}^{n+1}\bigr)
 -\partial_x\bigl(\mu(\theta^n)^\alpha Z_{u,x}^{n+1}\bigr)\\
 &=-Z_\rho^{n+1}(u_t^n+u^{n-1}u_x^n)
   -\rho^{n+1}Z_u^n u_x^n
   +\partial_x\!\left(\mu\bigl[(\theta^n)^\alpha
                       -(\theta^{n-1})^\alpha\bigr]u_x^n\right)-\partial_x\!\left(RZ_\rho^{n+1}\theta^n
                         +R\rho^nZ_\theta^n\right).
\end{align*}
The temperature difference is
\begin{align*}
 &\rho^{n+1}\bigl(Z_{\theta,t}^{n+1}+u^nZ_{\theta,x}^{n+1}\bigr)
 -\partial_x\bigl(\kappa(\theta^n)^\beta Z_{\theta,x}^{n+1}\bigr)\\
 &=-Z_\rho^{n+1}(\theta_t^n+u^{n-1}\theta_x^n)
   -\rho^{n+1}Z_u^n\theta_x^n
   +\partial_x\!\left(\kappa\bigl[(\theta^n)^\beta
                       -(\theta^{n-1})^\beta\bigr]\theta_x^n\right)
   +Z_Q^n,
\end{align*}
where
\begin{align*}
 Z_Q^n=-R\bigl(\rho^{n+1}\theta^nu_x^n
                  -\rho^n\theta^{n-1}u_x^{n-1}\bigr)+\mu\bigl((\theta^n)^\alpha(u_x^n)^2
             -(\theta^{n-1})^\alpha(u_x^{n-1})^2\bigr).
\end{align*}
Each difference in $Z_Q^n$ is expanded by changing one factor at a time;
for instance,
\[
 (\theta^n)^\alpha(u_x^n)^2-(\theta^{n-1})^\alpha(u_x^{n-1})^2
 =\bigl[(\theta^n)^\alpha-(\theta^{n-1})^\alpha\bigr](u_x^n)^2
 +(\theta^{n-1})^\alpha Z_{u,x}^n(u_x^n+u_x^{n-1}).
\]
Because $\theta^n\geq c_0/2$, the mean-value theorem gives
\begin{equation}\label{eq:local-power-difference}
 \norm{(\theta^n)^\gamma-(\theta^{n-1})^\gamma}_{L^p}
 \leq C_{c_0,L,\gamma}\norm{Z_\theta^n}_{L^p},
 \qquad \gamma\in\{\alpha,\beta\},\quad 2\leq p\leq\infty.
\end{equation}

Testing the last two difference equations by $Z_u^{n+1}$ and
$Z_\theta^{n+1}$, respectively, the transport terms combine with the
continuity equation for $\rho^{n+1}$ and hence introduce no derivative of
the density.  The two principal diffusion terms give
\begin{align*}
 \mu(c_0/2)^\alpha\norm{Z_{u,x}^{n+1}}_{L^2}^2,
 \qquad
 \kappa(c_0/2)^\beta\norm{Z_{\theta,x}^{n+1}}_{L^2}^2.
\end{align*}
All divergence-form source terms are integrated by parts and paired with
the corresponding derivative of the test function.  For example,
\begin{align*}
 \left|\int_0^1R(Z_\rho^{n+1}\theta^n
                  +\rho^nZ_\theta^n)Z_{u,x}^{n+1}\dd x\right|
 \leq \varepsilon\norm{Z_{u,x}^{n+1}}_{L^2}^2
   +C_{L,\varepsilon}\bigl(\norm{Z_\rho^{n+1}}_{L^2}^2
                           +\norm{Z_\theta^n}_{L^2}^2\bigr),\\
 \left|\int_0^1\mu\bigl[(\theta^n)^\alpha
                   -(\theta^{n-1})^\alpha\bigr]u_x^n
                   Z_{u,x}^{n+1}\dd x\right|
\leq \varepsilon\norm{Z_{u,x}^{n+1}}_{L^2}^2
   +C_{L,\varepsilon}\norm{u_x^n}_{L^\infty}^2
                           \norm{Z_\theta^n}_{L^2}^2.
\end{align*}
The nondivergence terms are handled by the same Young inequality and by
the uniform $\mathcal X_\tau$ bound.  After $\varepsilon$ is chosen to
absorb all new-gradient terms, the two parabolic energies and
\eqref{eq:local-density-difference-energy} imply
\begin{align*}
 d_{n+1}
 \leq C_{\delta,c_0,L}\int_0^\tau
 \bigl(1+\norm{(u^n,\theta^n)}_{H^3}^2
          +\norm{(u_t^n,\theta_t^n)}_{H^1}^2\bigr)
 \bigl[\norm{Z_\rho^{n+1}}_{L^2}^2
       +\norm{Z^n}_{H^1\times H^1}^2\bigr]\dd t.
\end{align*}
The term involving $Z_\rho^{n+1}$ is absorbed by Gronwall's inequality.
For the remaining terms, the one-dimensional interpolation estimate and
Cauchy--Schwarz give
\begin{align*}
 \int_0^\tau\norm z_{L^\infty}^2\dd t
 &\leq C\int_0^\tau\norm z_{L^2}\norm z_{H^1}\dd t
 \leq C\tau^{1/2}
  \left(\sup_{t\leq\tau}\norm z_{L^2}^2
       +\int_0^\tau\norm{z_x}_{L^2}^2\dd t\right).
\end{align*}
Combining this with the $L^2$-in-time high norms in the
$\mathcal X_\tau$ ball (and weakening $\tau^{1/2}$ to $\tau^{1/4}$ when
$0<\tau\leq1$) gives
\begin{equation}\label{eq:local-contraction}
 d_{n+1}\leq C_{\delta,c_0,L}\tau^{1/4}d_n.
\end{equation}
Reducing $\tau$ makes the coefficient in \eqref{eq:local-contraction}
strictly less than one.  The fixed point is
the asserted local strong solution, while the uniform $\mathcal X_\tau$
bound and weak lower semicontinuity recover the full regularity stated in
the lemma.  The smooth functions
$\theta\mapsto\theta^\alpha$ and $\theta\mapsto\theta^\beta$ are locally
Lipschitz in $H^2$ on $\theta\geq c_0/2$.

At a positive endpoint $T$, the lower bounds and the norms in
\eqref{eq:finite-high} control the $H^2$ traces of the three unknowns and all
coefficient norms in Lemma~\ref{lem:linear-local}.  The compatibility
residuals are supplied by the equations:
\begin{align*}
 g_1(T)&=\rho(T)^{-1/2}
 \left[\partial_x\bigl(\mu\theta(T)^\alpha u_x(T)\bigr)
       -\partial_x\bigl(R\rho(T)\theta(T)\bigr)\right]
       =\sqrt{\rho(T)}\,\dot u(T),\\
 g_2(T)&=\rho(T)^{-1/2}
 \left[\partial_x\bigl(\kappa\theta(T)^\beta\theta_x(T)\bigr)
       -R\rho(T)\theta(T)u_x(T)+\mu\theta(T)^\alpha u_x(T)^2\right]
       =\sqrt{\rho(T)}\,\dot\theta(T).
\end{align*}
The bracketed expressions belong to $L^2$ by the $H^2$ endpoint bounds,
and multiplication by $\rho(T)^{-1/2}\in H^2\hookrightarrow L^\infty$ is
legitimate because $\inf\rho(T)>0$.  Thus $g_1(T),g_2(T)\in L^2$, so the
compatibility hypothesis is recovered at time $T$.  The local lifespan
depends only on these endpoint norms and the two positive lower bounds.
The construction can therefore be restarted at $T$, proving the
continuation criterion.  This lemma is used only for strictly positive
approximations; no local vacuum theorem is invoked.
\end{proof}

\begin{proof}[Proof of Theorem~\ref{thm:main}]
For $0<\delta<1$ set
\[
 \rho_0^\delta=\rho_0+\delta,\qquad
 u_0^\delta=u_0,\qquad \theta_0^\delta=\theta_0.
\]
Its conserved mean is $m^\delta=m+\delta$.  In every estimate for the
$\delta$-problem, the relative entropy, $\Pi$, and the centered pressure use
$m^\delta$ in place of $m$.  Since
\[
 m^\delta\in[m_*,m^*+1],\qquad
 \norm{\rho_0^\delta}_{H^2}\leq K_0+1,
\]
and these elementary bounds will be absorbed into fixed enlarged
data-class constants below.
The corresponding compatibility residuals are defined by division by
$\sqrt{\rho_0^\delta}$.  More explicitly,
\begin{equation*}
 g_1^\delta=\frac{\sqrt{\rho_0}g_1-R\delta\theta_{0x}}{\sqrt{\rho_0+\delta}},
 \qquad g_2^\delta=\frac{\sqrt{\rho_0}g_2-R\delta\theta_0u_{0x}}{\sqrt{\rho_0+\delta}}.
\end{equation*}
Since $\delta/\sqrt{\rho_0+\delta}\leq\sqrt\delta$,
\begin{equation}\label{eq:gdelta}
 \norm{g_1^\delta}_{L^2}+\norm{g_2^\delta}_{L^2}
 \leq\norm{g_1}_{L^2}+\norm{g_2}_{L^2}
 +C\sqrt\delta\bigl(\norm{\theta_{0x}}_{L^2}
                    +\bt\norm{u_{0x}}_{L^2}\bigr).
\end{equation}
Combining \eqref{eq:gdelta} with \eqref{eq:data-dynamic},
\eqref{eq:natural-data}, and $\bt\geq1$, we obtain
\begin{align}\label{eq:natural-data-delta}
 &\bt^{-2\alpha}\norm{g_1^\delta}_{L^2}^2
  +\bt^{-2\beta}\norm{g_2^\delta}_{L^2}^2\leq C\left(\bt^{-2\alpha}\norm{g_1}_{L^2}^2
                  +\bt^{-2\beta}\norm{g_2}_{L^2}^2\right)+C\delta\left(
   \bt^{-2\alpha}\norm{\theta_{0x}}_{L^2}^2
  +\bt^{2-2\beta}\norm{u_{0x}}_{L^2}^2\right)
 \leq C K_0.
\end{align}
Consequently, there is a constant
\[
 K_0^\sharp=C_\sharp(1+K_0),\qquad
 C_\sharp=C_\sharp(R,\mu,\kappa,\alpha,\beta)\geq1,
\]
such that
\begin{equation}\label{eq:data-delta}
 \norm{\rho_0^\delta}_{H^2}\leq K_0^\sharp,\qquad
 \norm{u_0}_{H^2}^2+\norm{\theta_0-\bt}_{H^2}^2
 +\bt^{-2\alpha}\norm{g_1^\delta}_{L^2}^2
 +\bt^{-2\beta}\norm{g_2^\delta}_{L^2}^2\leq K_0^\sharp
 \quad(0<\delta<1).
\end{equation}
Set $m^{*,\sharp}=m^*+1$.  In the statement of the theorem, choose
$\Theta_0$ to be the larger of the thresholds supplied by
Propositions~\ref{prop:initial-layer} and \ref{prop:uniform}, with
$K_0,m^*$ replaced by $K_0^\sharp,m^{*,\sharp}$.  This choice still depends
only on the quantities listed in Theorem~\ref{thm:main}.  Thus all initial
bounds needed below are uniform in $\delta$; in particular,
\[
 \norm{\sqrt{\rho^\delta}\dot u^\delta(0)}_{L^2}=\norm{g_1^\delta}_{L^2},\qquad
 \norm{\sqrt{\rho^\delta}\dot\theta^\delta(0)}_{L^2}=\norm{g_2^\delta}_{L^2}.
\]

Lemma~\ref{lem:positive-local} gives a maximal positive-density solution.
The identities in Sections~3--4, written there first for smooth solutions,
are valid for this strong solution by Steklov averaging in time and standard
spatial regularization; the compatibility conditions supply the initial
traces.  Equivalently, one may perform the estimates on the regularized
iterates above and pass to the strong limit.
Choose the bootstrap constants in Propositions~\ref{prop:initial-layer} and
\ref{prop:uniform} with $K_0,m^*$ replaced by
$K_0^\sharp,m^{*,\sharp}$.
For the positive-density solution, Lemma~\ref{lem:linear-local} gives
$u_t,\theta_t\in C([0,\tau];L^2)$ on each local interval.  Consequently
$\sqrt\rho\dot u$, $\sqrt\rho\dot\theta$, and the weighted functional
$\mathcal B_\sigma$ are continuous in time.  Thus the initial-layer
bootstrap holds on a short interval.  Proposition~\ref{prop:initial-layer}
improves it.  Since $\sigma\geq\bt^{1-\beta}$, its weighted bounds imply
\eqref{eq:finite-layer-base} on every current subinterval of $[0,1]$; the
proofs of Lemmas~\ref{lem:density-high}--\ref{lem:elliptic-recovery} then
supply all continuation norms, with constants allowed to depend on $\bt$
but not on $\delta$.  The positive solution therefore reaches $t=1$ and
satisfies \eqref{eq:layer-entry}.  Starting at $t=1$,
Proposition~\ref{prop:uniform} improves the post-layer bootstrap, so the
same continuation argument extends the solution globally.  More explicitly,
on every finite interval the flow formula gives
\[
 \rho^\delta(X^\delta(a,t),t)
 =\rho_0^\delta(a)
 \exp\left(-\int_0^t u_x^\delta(X^\delta(a,s),s)\dd s\right)>0.
\]
Consequently,
\[
 \inf_{x,\,0\leq t\leq T}\rho^\delta(x,t)
 \geq\delta\exp\left(
  -\int_0^T\norm{u_x^\delta(s)}_{L^\infty}\dd s\right)>0.
\]
The temperature has the lower bounds furnished successively by
Propositions~\ref{prop:initial-layer} and \ref{prop:uniform}, and
Corollary~\ref{cor:continuation} supplies all remaining continuation norms.
Therefore the $\delta$-solution is global.

Fix $T<\infty$.  Propositions~\ref{prop:initial-layer} and
\ref{prop:uniform}, together with Corollary~\ref{cor:continuation}, are
uniform in $\delta$.  The
equations additionally give
\begin{align}
 \norm{\rho_t^\delta}_{H^1}
 &\leq C\norm{u^\delta}_{H^2}\norm{\rho^\delta}_{H^2},
 \label{eq:compact-rhot}\\
 \norm{u_t^\delta}_{H^1}
 &\leq\norm{\dot u^\delta}_{H^1}
      +C\norm{u^\delta}_{H^2}^2,
 \label{eq:compact-ut}\\
 \norm{\theta_t^\delta}_{H^1}
 &\leq\norm{\dot\theta^\delta}_{H^1}
      +C\norm{u^\delta}_{H^2}\norm{\theta^\delta}_{H^2}.
 \label{eq:compact-thetat}
\end{align}
Hence $\rho_t^\delta$ is bounded in $L^\infty(0,T;H^1)$ and
$u_t^\delta,\theta_t^\delta$ are bounded in $L^2(0,T;H^1)$.
The Aubin--Lions--Simon compactness theorem, applied with
$H^2\Subset H^1\hookrightarrow L^2$, yields, along a subsequence,
\begin{align*}
 \rho^\delta&\longrightarrow\rho
     &&\text{strongly in }C([0,T];H^1),\\
 u^\delta,\theta^\delta&\longrightarrow u,\theta
     &&\text{strongly in }C([0,T];H^1)
       \text{ and weakly in }L^2(0,T;H^3).
\end{align*}
In one dimension the strong convergences are uniform in space-time.  Thus
\[
 (\theta^\delta)^\alpha\longrightarrow\theta^\alpha,
 \qquad (\theta^\delta)^\beta\longrightarrow\theta^\beta,
 \qquad \rho^\delta\theta^\delta\longrightarrow\rho\theta
\]
uniformly.  Since $u_x^\delta\to u_x$ and
$\theta_x^\delta\to\theta_x$ strongly in
$C([0,T];L^2)$, it follows that
\begin{align*}
 (\theta^\delta)^\alpha u_x^\delta
 &\longrightarrow\theta^\alpha u_x,
 & (\theta^\delta)^\beta\theta_x^\delta
 &\longrightarrow\theta^\beta\theta_x
 &&\text{strongly in }L^2((0,T)\times\I),\\
 (u_x^\delta)^2&\longrightarrow u_x^2,
 & (\theta^\delta)^\alpha(u_x^\delta)^2
 &\longrightarrow\theta^\alpha u_x^2
 &&\text{strongly in }L^1((0,T)\times\I).
\end{align*}
The inertial and pressure products also converge strongly; for example,
\begin{align*}
 \rho^\delta u^\delta&\longrightarrow\rho u,&
 \rho^\delta(u^\delta)^2&\longrightarrow\rho u^2,&
 \rho^\delta\theta^\delta&\longrightarrow\rho\theta
 \qquad\text{in }C([0,T];L^2).
\end{align*}
Alternatively, in material form the weak convergence
$\dot u^\delta\rightharpoonup\dot u$ in $L^2H^1$, together with the
uniform convergence of $\rho^\delta$, gives
$\rho^\delta\dot u^\delta\rightharpoonup\rho\dot u$ in $L^2L^2$; the
same statement holds for the temperature material derivative.  Hence the
continuity equation passes to the limit through the first three products,
the momentum equation through the pressure and viscous-flux limits, and
the temperature equation through the heat-flux and viscous-heating limits.
Testing first with smooth functions compactly supported in time also shows
that the initial traces pass by the strong $C H^1$ convergence.  The zero
Dirichlet traces are closed under weak convergence in $H_0^1$.

The uniform pointwise estimates pass to the limit because the convergence
is uniform, while the energy and higher-order bounds pass by weak or
weak-star lower semicontinuity.  A diagonal extraction over
$T=1,2,\ldots$ therefore gives a global solution satisfying
\eqref{eq:pointwise-main}--\eqref{eq:finite-high}.

We finally recover the time continuity required by
Definition~\ref{def:strong}.  From
$u_t=\dot u-uu_x$ and $\theta_t=\dot\theta-u\theta_x$,
\eqref{eq:dot-poincare} and the finite-time bounds give
\[
 u_t,\theta_t\in L^2(0,T;H^1),\qquad
 u,\theta-\bt\in L^2(0,T;H^3).
\]
The Lions--Magenes lemma for
$H^3\subset H^2\subset H^1$ therefore yields
$u,\theta-\bt\in C([0,T];H^2)$.  The $H^2$ transport energy associated
with
\[
 \rho_{xxt}+u\rho_{xxx}+3u_x\rho_{xx}
 +3u_{xx}\rho_x+\rho u_{xxx}=0
\]
is absolutely continuous.  Multiplication by $\rho_{xx}$ and integration
by parts give the exact identity
\begin{equation}\label{eq:rho-top-continuity}
 \frac12\frac{\dd}{\dd t}\norm{\rho_{xx}}_{L^2}^2+\frac52\int_0^1u_x\rho_{xx}^2\dd x
 +3\int_0^1u_{xx}\rho_x\rho_{xx}\dd x+\int_0^1\rho u_{xxx}\rho_{xx}\dd x=0.
\end{equation}
Every integral term in \eqref{eq:rho-top-continuity} belongs to $L^1(0,T)$ by the
finite-time bounds.  Consequently,
\[
 \frac{\dd}{\dd t}\norm{\rho_{xx}(t)}_{L^2}^2\in L^1(0,T).
\]
The estimates already imply $\rho\in C_w([0,T];H^2)$; weak continuity
together with continuity of the $H^2$ norm gives
$\rho\in C([0,T];H^2)$.  The continuity equation then gives
$\rho_t\in C([0,T];H^1)$.  Weak-star compactness in
$L^\infty(0,T;L^2)$ identifies the limits of
$\sqrt{\rho^\delta}\dot u^\delta$ and
$\sqrt{\rho^\delta}\dot\theta^\delta$ with the corresponding material
derivatives of the limit: indeed, \eqref{eq:dot-poincare} gives, after
subsequence extraction,
\[
 \dot u^\delta\rightharpoonup\dot u,\qquad
 \dot\theta^\delta\rightharpoonup\dot\theta
 \quad\hbox{in }L^2(0,T;H_0^1),
\]
while $\sqrt{\rho^\delta}\to\sqrt\rho$ uniformly on
$[0,1]\times[0,T]$.  Thus the limit is a global strong solution, and all
asserted estimates follow by lower semicontinuity.

\end{proof}

\section{Exponential relaxation and persistence of vacuum}
\label{sec:persistence}

The global dissipation in Theorem~\ref{thm:main} does not by itself yield a
decay rate.  We first recover the missing density coercivity from the
effective flux, without differentiating the density, and then combine the
relative entropy with the differentiated energy $\Ycal$.

\subsection{Vacuum-compatible coercivity}

Set
\begin{equation}\label{eq:decay-dissipations}
 \Dcal_0=\bt^{\alpha-1}\norm{u_x}_{L^2}^2+\bt^{\beta-2}\norm{\theta_x}_{L^2}^2,
 \qquad
 \Dcal_h=\bt^{-2}(\norm{\sqrt\rho\dot u}_{L^2}^2+\norm{\sqrt\rho\dot\theta}_{L^2}^2)
          +\bt^{\alpha-2}\norm{\dot u_x}_{L^2}^2+\bt^{\beta-2}\norm{\dot\theta_x}_{L^2}^2.
\end{equation}

The decay argument first needs an algebraic replacement for the missing
unweighted density-gradient dissipation.  The next lemma recovers
$\rho-m$ directly from the effective flux and converts the instantaneous
dissipations into coercive control of the full energy.  This is the input
that turns the post-layer differential inequality into a strict Lyapunov
inequality.

\begin{lemma}[Instantaneous density coercivity]
\label{lem:decay-coercivity}
Suppose that
$0\leq\rho\leq M$ and $\bt/2\leq\theta\leq3\bt/2$.  Then
\begin{align}
 |\Pi-Rm\bt|&\leq C \norm{\theta_x}_{L^2},\label{eq:Pi-equilibrium}\\
 \norm{\rho-m}_{L^2}
 &\leq C\left(
   \bt^{\alpha-1}\norm{u_x}_{L^2}+\bt^{-1}\norm{\sqrt\rho\dot u}_{L^2}+\bt^{-1}\norm{\theta_x}_{L^2}\right),
 \label{eq:density-coercivity}
\end{align}
and
\begin{equation}\label{eq:instantaneous-coercivity}
 \Acal(t)+\bt^{-1}\Ecal(t)
 \leq C\bt^{\alpha-1}\bigl(\Dcal_0(t)+\Dcal_h(t)\bigr).
\end{equation}
The constant is independent of a positive lower bound for $\rho$.
\end{lemma}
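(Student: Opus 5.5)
For \eqref{eq:Pi-equilibrium} I would write
\[
 \Pi-Rm\bt=R\,\frac{\int_0^1\rho\theta^{1-\alpha}\dd x-\bt\int_0^1\rho\theta^{-\alpha}\dd x}{\int_0^1\theta^{-\alpha}\dd x}
 +R\bt\,\frac{\int_0^1\rho\theta^{-\alpha}\dd x-m\int_0^1\theta^{-\alpha}\dd x}{\int_0^1\theta^{-\alpha}\dd x}.
\]
The first numerator equals $\int_0^1\rho\theta^{-\alpha}(\theta-\bt)\dd x$, which is bounded by $CM\bt^{-\alpha}\norm{\theta-\bt}_{L^\infty}$. In the second, use $\int_0^1(\rho-m)\dd x=0$ to replace $\theta^{-\alpha}$ by $\theta^{-\alpha}-\bt^{-\alpha}$. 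By the mean value theorem this difference is bounded pointwise by $C\bt^{-\alpha-1}|\theta-\bt|$, so the second term is at most $C\bt\cdot\bt^{-\alpha-1}(M+m)\norm{\theta-\bt}_{L^\infty}$ divided by $\int_0^1\theta^{-\alpha}\dd x\geq c\bt^{-\alpha}$. Both pieces are then $\leq C\norm{\theta-\bt}_{L^\infty}\leq C\norm{\theta_x}_{L^2}$, using the Dirichlet trace $\theta=\bt$ at the endpoints. No lower bound for $\rho$ enters.

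For \eqref{eq:density-coercivity}, the natural identity comes from the definition of $F$:
\[
 R\theta(\rho-m)=\mu\theta^\alpha u_x-(F+\Pi)+(\Pi-Rm\bt)-Rm(\theta-\bt).
\]
I divide by $R\theta\geq R\bt/2$ and take $L^2$ norms. The first term gives $C\bt^{\alpha-1}\norm{u_x}_{L^2}$. The flux anchor \eqref{eq:F-anchor} bounds the second by $C\bt^{-1}m^{1/2}\norm{\sqrt\rho\dot u}_{L^2}$. The third is controlled by \eqref{eq:Pi-equilibrium} and Poincar\'e, giving $C\bt^{-1}\norm{\theta_x}_{L^2}$, and the last term gives the same. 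This is pointwise algebra only, so vacuum causes no difficulty.

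For \eqref{eq:instantaneous-coercivity}, I bound each piece of $\Acal+\bt^{-1}\Ecal$ by $\bt^{\alpha-1}(\Dcal_0+\Dcal_h)$.
\begin{itemize}
\item $\bt^{\alpha-2}\norm{u_x}^2_{L^2}=\bt^{-1}\cdot\bt^{\alpha-1}\norm{u_x}^2_{L^2}\leq\Dcal_0$.
\item $\bt^{\beta-2}\norm{\theta_x}^2_{L^2}\leq\Dcal_0$.
\item $\bt^{-2}\bigl(\norm{\sqrt\rho\dot u}^2_{L^2}+\norm{\sqrt\rho\dot\theta}^2_{L^2}\bigr)\leq\Dcal_h$.
\end{itemize}
The factor $\bt^{\alpha-1}\geq1$ is only needed for the entropy part. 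For $\bt^{-1}\Ecal$, convexity under the pointwise bounds gives
\[
 \Ecal\leq C\bigl(\norm{\sqrt\rho u}_{L^2}^2+\bt\norm{\rho-m}_{L^2}^2+\bt^{-1}\norm{\theta-\bt}_{L^2}^2\bigr).
\]
The last term uses $\Psi(\theta\mid\bt)\leq C\bt^{-1}(\theta-\bt)^2$ and $\rho\leq M$. Each term is then handled separately.
\begin{itemize}
\item $\bt^{-1}\norm{\sqrt\rho u}^2_{L^2}\leq C\norm{u_x}^2_{L^2}\leq C\bt^{1-\alpha}\Dcal_0$.
\item $\bt^{-2}\norm{\theta-\bt}^2_{L^2}\leq C\bt^{-\beta}\Dcal_0$.
\item The density term is the main one. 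Squaring \eqref{eq:density-coercivity} gives
\[
 \norm{\rho-m}^2_{L^2}\leq C\bigl(\bt^{2\alpha-2}\norm{u_x}^2_{L^2}+\bt^{-2}\norm{\sqrt\rho\dot u}^2_{L^2}+\bt^{-2}\norm{\theta_x}^2_{L^2}\bigr)\leq C\bt^{\alpha-1}\Dcal_0+C\Dcal_h+C\bt^{-\beta}\Dcal_0.
\]
\end{itemize}
The density term is exactly what produces the loss $\bt^{\alpha-1}$ and hence the decay rate $\bt^{1-\alpha}$. The main care needed is to verify that the viscous term $\bt^{\alpha-1}\norm{u_x}$ in \eqref{eq:density-coercivity} yields exactly $\bt^{\alpha-1}\Dcal_0$ and no worse; if it did, the rate claimed in Theorem~\ref{thm:relaxation} would fail. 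All constants depend only on $M$ and the physical parameters.
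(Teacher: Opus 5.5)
Your proposal is correct and follows essentially the same route as the paper: a mean-value bound for $\Pi-Rm\bt$, the effective-flux identity together with the anchor \eqref{eq:F-anchor} to recover $\rho-m$ without dividing by $\rho$, and the quadratic upper bound for $\Ecal$ combined with the squared density estimate. The differences are cosmetic: you split $\Pi-Rm\bt$ using the zero mean of $\rho-m$, where the paper normalizes by $q=\theta/\bt$, and you divide by $\theta$ pointwise, where the paper splits $R\rho\theta-Rm\bt=R\bt(\rho-m)+R\rho(\theta-\bt)$.
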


\begin{proof}
Put $q=\theta/\bt$.  Since $1/2\leq q\leq3/2$, definition
\eqref{eq:Pi} gives
\begin{align*}
 \frac{\Pi}{R\bt}-m
 &=\frac{\displaystyle
    \int_0^1\rho(q^{1-\alpha}-1)\dd x
    +m\int_0^1(1-q^{-\alpha})\dd x}
   {\displaystyle\int_0^1q^{-\alpha}\dd x}.
\end{align*}
The mean-value theorem and the bounds for $q$ therefore imply
\[
 |\Pi-Rm\bt|
 \leq C\bt\norm{q-1}_{L^\infty}
 \leq C\norm{\theta-\bt}_{L^\infty}
 \leq C\norm{\theta_x}_{L^2},
\]
which proves \eqref{eq:Pi-equilibrium}.  From
$R\rho\theta=\mu\theta^\alpha u_x-F$, \eqref{eq:F-anchor}, and
\eqref{eq:Pi-equilibrium},
\begin{equation}\label{eq:pressure-by-dissipation}
 \norm{R\rho\theta-Rm\bt}_{L^2}
 \leq C\left(\bt^\alpha \norm{u_x}_{L^2}+\norm{\sqrt\rho\dot u}_{L^2}+\norm{\theta_x}_{L^2}\right).
\end{equation}
On the other hand,
$R\rho\theta-Rm\bt=R\bt(\rho-m)+R\rho(\theta-\bt)$.
The density upper bound and
the Dirichlet Poincar\'e inequality give
\[
 R\bt\norm{\rho-m}_{L^2}
 \leq\norm{R\rho\theta-Rm\bt}_{L^2}+C\norm{\theta-\bt}_{L^2}
 \leq\norm{R\rho\theta-Rm\bt}_{L^2}+C\norm{\theta_x}_{L^2}.
\]
Together with \eqref{eq:pressure-by-dissipation}, this proves
\eqref{eq:density-coercivity}.

For $0\leq\rho\leq M$, $m\in[m_*,m^*]$, and
$\bt/2\leq\theta\leq3\bt/2$, the functions in
\eqref{eq:relative-functions} also satisfy
\[
 h(\rho\mid m)\leq C|\rho-m|^2,
 \qquad
 \Psi(\theta\mid\bt)\leq C\bt^{-1}|\theta-\bt|^2.
\]
Hence, by the boundary Poincar\'e inequality,
\begin{equation}\label{eq:entropy-upper-decay}
 \Ecal(t)\leq C\left(
  \bt\norm{\rho-m}_{L^2}^2+\norm{u_x}_{L^2}^2+\bt^{-1}\norm{\theta_x}_{L^2}^2\right).
\end{equation}
Insert \eqref{eq:density-coercivity} into
\eqref{eq:entropy-upper-decay}.  Since $\bt\geq1$ and $\alpha>2$, this
yields
\[
 \bt^{-1}\Ecal(t)
 \leq C\bt^{\alpha-1}\bigl(\Dcal_0(t)+\Dcal_h(t)\bigr).
\]
The definition \eqref{eq:A} gives the same upper bound for $\Acal(t)$.
This proves \eqref{eq:instantaneous-coercivity}.
\end{proof}

\subsection{Enhanced Lyapunov functional}

We now combine the bookkeeping inequality from
Lemma~\ref{lem:bookkeeping}, the relative-entropy identity, and the
instantaneous coercivity from Lemma~\ref{lem:decay-coercivity}.  The resulting
functional controls both the differentiated energy and the density part of
the entropy, and hence yields a genuine exponential rate rather than only
time-integrability of the dissipation.

\begin{proposition}[Strict Lyapunov inequality]
\label{prop:strict-lyapunov}
Let a smooth positive-density solution satisfy the bounds furnished by
Proposition~\ref{prop:uniform} on $[1,\infty)$.  There is a fixed $k_*>0$,
independent of $\bt$ and of the density lower bound, such that, for $t\geq1$,
\begin{equation}\label{eq:enhanced-Lyapunov}
 \Zcal=\Ycal+k_*\bt^{-1}\Ecal
\end{equation}
satisfies
\begin{align}
 &c\bigl(\Acal+\bt^{-1}\Ecal\bigr)
 \leq\Zcal
 \leq C\bigl(\Acal+\bt^{-1}\Ecal\bigr),
 \label{eq:Z-equivalence}\\
& \Zcal'(t)+c\bigl(\Dcal_0(t)+\Dcal_h(t)\bigr)\leq0,
 \label{eq:Z-differential}\\
& \Zcal(t)\leq\Zcal(1)
  \exp\!\left(-c\bt^{1-\alpha}(t-1)\right).
 \label{eq:Z-decay}
\end{align}
Moreover,
\begin{equation}\label{eq:Z-tail}
 \int_t^\infty(\Dcal_0+\Dcal_h)(s)\dd s
 \leq C\Zcal(1)
  \exp\!\left(-c\bt^{1-\alpha}(t-1)\right).
\end{equation}
\end{proposition}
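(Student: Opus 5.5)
The proof combines the post-layer differential inequality of Lemma~\ref{lem:bookkeeping} with the relative-entropy identity \eqref{eq:entropy-identity}. The entropy term is added with a weight large enough to absorb the $K$-independent low-order source $C_0\Dcal_0$. The instantaneous coercivity \eqref{eq:instantaneous-coercivity} then converts the resulting dissipation inequality into a linear differential inequality for $\Zcal$.

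\emph{Step 1: entropy dissipation.} On $[1,\infty)$, Proposition~\ref{prop:uniform} gives $\rho\leq M_1$, $\bt/2\leq\theta\leq3\bt/2$ and $\Acal\leq K_1$. Hence Lemma~\ref{lem:bookkeeping} applies on $J=[1,T]$ for every $T$, and so does Lemma~\ref{lem:decay-coercivity}. Since $\theta\sim\bt$, identity \eqref{eq:entropy-identity} gives
\[
 \bt^{-1}\Ecal'+c_E\bigl(\bt^{\alpha-1}\norm{u_x}_{L^2}^2+\bt^{\beta-2}\norm{\theta_x}_{L^2}^2\bigr)\leq0,
\]
that is, $\bt^{-1}\Ecal'+c_E\Dcal_0\leq0$. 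The constant $c_E=c_E(\mu,\kappa,\alpha,\beta)$ is independent of $\bt$ and of the bootstrap numbers.

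\emph{Step 2: choice of $k_*$.} Inequality \eqref{eq:theta-choice} was imposed in Proposition~\ref{prop:uniform}, so $C_0(1+2K_1)^N\bt^{-\gamma}\leq\min\{c/2,1\}$ for $\bt\geq\Theta_1$. Then \eqref{eq:closure-differential} reduces to
\[
 \Ycal'+\tfrac c2\Dcal_h\leq(C_0+1)\Dcal_0 .
\]
Fix $k_*$ so that $k_*c_E\geq2(C_0+1)+c$. Because $C_0$ does not depend on $K_1$, this choice is not circular and $k_*$ depends only on the fixed parameters. Adding $k_*$ times the inequality of Step~1 yields \eqref{eq:Z-differential} almost everywhere. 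Here I use that $\Ycal$ and $\Ecal$ are absolutely continuous in time for the positive-density solution.

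\emph{Step 3: equivalence.} By \eqref{eq:Y-equivalence},
\[
 \Zcal\geq c\Acal+k_*\bt^{-1}\Ecal ,
\]
which is the lower bound in \eqref{eq:Z-equivalence}. For the upper bound, note that $\bt^{-\alpha-1}\Ecal\leq\bt^{-1}\Ecal$ for $\bt\geq1$. So $\Zcal\leq C(\Acal+\bt^{-1}\Ecal)$, which proves \eqref{eq:Z-equivalence}.

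\emph{Step 4: exponential decay.} Combining \eqref{eq:Z-equivalence} with \eqref{eq:instantaneous-coercivity} gives
\[
 \Zcal\leq C\bt^{\alpha-1}(\Dcal_0+\Dcal_h).
\]
Inserting this into \eqref{eq:Z-differential} produces
\[
 \Zcal'+c\bt^{1-\alpha}\Zcal\leq0 ,
\]
and Gronwall's inequality on $[1,t]$ gives \eqref{eq:Z-decay}.

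\emph{Step 5: the tail bound.} Integrate \eqref{eq:Z-differential} over $[t,T]$ and let $T\to\infty$. Since $\Zcal\geq0$,
\[
 c\int_t^\infty(\Dcal_0+\Dcal_h)\dd s\leq\Zcal(t).
\]
Combined with \eqref{eq:Z-decay}, this gives \eqref{eq:Z-tail}.

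\emph{Main obstacle.} The delicate point is the constant order in Step~2. The $K$-dependent remainder $C_0(1+K)^N\bt^{-\gamma}(\Dcal_0+\Dcal_h)$ must already be small through the threshold fixed in Proposition~\ref{prop:uniform}, and only afterwards is $k_*$ chosen. If the $\Dcal_0$ part of that remainder is not already below $1$, I will enlarge $\Theta_1$ once more after fixing $k_*$. This is harmless because $k_*$ does not depend on $\Theta_1$.

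The second point to check is that Lemma~\ref{lem:decay-coercivity} uses no density lower bound; its constant depends only on $M_1$. The rate is therefore uniform in the approximation, and the estimates pass to the vacuum limit by lower semicontinuity.
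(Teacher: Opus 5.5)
Your proposal is correct and follows the paper's proof step for step. You use \eqref{eq:theta-choice} to reduce \eqref{eq:closure-differential} to $\Ycal'+c_1\Dcal_h\leq C_1\Dcal_0$, add $k_*\bt^{-1}$ times the entropy dissipation with $k_*$ a fixed multiple of $C_1/c_2$, and conclude with \eqref{eq:Y-equivalence}, \eqref{eq:instantaneous-coercivity}, Gronwall, and integration of \eqref{eq:Z-differential} over $[t,\infty)$; your fallback of enlarging $\Theta_1$ again is unnecessary, since \eqref{eq:theta-choice} already bounds the full remainder coefficient by $\min\{c/2,1\}$.
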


\begin{proof}
On $[1,\infty)$, Proposition~\ref{prop:uniform} gives
\[
 0\leq\rho\leq M_1,\qquad
 \frac12\bt\leq\theta\leq\frac32\bt,
 \qquad \Acal\leq K_1.
\]
Apply Lemma~\ref{lem:bookkeeping} with $M=M_1$ and $K=K_1$.
By \eqref{eq:theta-choice}, the coefficient of the last term in
\eqref{eq:closure-differential} is at most $\min\{c/2,1\}$.
Its $\Dcal_h$ part is absorbed into the left-hand side and its
$\Dcal_0$ part remains on the right.  Hence there are fixed
$c_1,C_1>0$, independent of $\bt$ and $\inf\rho$, such that
\begin{equation}\label{eq:Y-decay-start}
 \Ycal'(t)+c_1\Dcal_h(t)\leq C_1\Dcal_0(t).
\end{equation}
The entropy identity can be written as
\[
 \Ecal'+\mu\bt\int_0^1\theta^{\alpha-1}u_x^2\dd x
 +\kappa\bt\int_0^1\theta^{\beta-2}\theta_x^2\dd x=0.
\]
Since $\theta\geq\bt/2$, $\alpha>2$, and $\beta>2$, we have
$\theta^{\alpha-1}\geq2^{1-\alpha}\bt^{\alpha-1}$ and
$\theta^{\beta-2}\geq2^{2-\beta}\bt^{\beta-2}$.  Therefore
\begin{equation}\label{eq:E-decay-start}
 \Ecal'(t)+c_2\bt\Dcal_0(t)\leq0,
 \qquad c_2:=\min\{\mu2^{1-\alpha},\kappa2^{2-\beta}\}>0.
\end{equation}

Choose $k_*=2C_1/c_2$.  Multiplying \eqref{eq:E-decay-start} by
$k_*\bt^{-1}$ and adding it to \eqref{eq:Y-decay-start} gives
\[
 \Zcal'+c_1\Dcal_h+(k_*c_2-C_1)\Dcal_0\leq0.
\]
Since $k_*c_2-C_1=C_1$, with
$c_3:=\min\{c_1,C_1\}$ this becomes
\begin{equation*}
 \Zcal'+c_3(\Dcal_0+\Dcal_h)\leq0,
\end{equation*}
which is \eqref{eq:Z-differential}.

Let $c_Y,C_Y>0$ be the constants in \eqref{eq:Y-equivalence}.  Since
$\bt^{-\alpha-1}\leq\bt^{-1}$, we have
\begin{align*}
 \Zcal
 &\geq c_Y\Acal+c_Y\bt^{-\alpha-1}\Ecal
       +k_*\bt^{-1}\Ecal
 \geq\min\{c_Y,k_*\}(\Acal+\bt^{-1}\Ecal),\\
 \Zcal
 &\leq C_Y\Acal+C_Y\bt^{-\alpha-1}\Ecal
       +k_*\bt^{-1}\Ecal
 \leq\max\{C_Y,C_Y+k_*\}(\Acal+\bt^{-1}\Ecal).
\end{align*}
This proves \eqref{eq:Z-equivalence}, and in particular $\Zcal\geq0$.

The upper equivalence and Lemma~\ref{lem:decay-coercivity} give a fixed
$C_L>0$ such that
\[
 \Zcal(t)\leq C_L\bt^{\alpha-1}
   \bigl(\Dcal_0(t)+\Dcal_h(t)\bigr).
\]
Thus
\[
 \Dcal_0+\Dcal_h\geq C_L^{-1}\bt^{1-\alpha}\Zcal,
\]
and \eqref{eq:Z-differential} implies
\[
 \Zcal'+\lambda_0\bt^{1-\alpha}\Zcal\leq0,
 \qquad \lambda_0:=c_3C_L^{-1}>0.
\]
Multiplying by
$\exp(\lambda_0\bt^{1-\alpha}(t-1))$ and integrating from $1$ to $t$
proves \eqref{eq:Z-decay}.

Finally, integration of \eqref{eq:Z-differential} over $[t,T]$ yields
\[
 c_3\int_t^T(\Dcal_0+\Dcal_h)(s)\dd s
 \leq\Zcal(t)-\Zcal(T)\leq\Zcal(t).
\]
Letting $T\to\infty$ by monotone convergence and then using
\eqref{eq:Z-decay} proves \eqref{eq:Z-tail}.
\end{proof}

\begin{proof}[Proof of the relaxation assertions in
Theorem~\ref{thm:relaxation}]
Apply Proposition~\ref{prop:strict-lyapunov} to the positive-density
$\delta$-solutions constructed in Section~\ref{sec:construction}, using
their conserved means $m^\delta=m+\delta$.  The enlarged data-class bounds
\eqref{eq:data-delta}, Proposition~\ref{prop:initial-layer}, the equivalence
\eqref{eq:Z-equivalence}, and $\Ecal^\delta(1)\leq C\bt$ show that
\begin{align*}
 \Zcal^\delta(1)
 &=\Ycal^\delta(1)+k_*\bt^{-1}\Ecal^\delta(1)
 \leq C\Acal^\delta(1)
      +C\bt^{-\alpha-1}\Ecal^\delta(1)
      +k_*\bt^{-1}\Ecal^\delta(1)\leq C
\end{align*}
uniformly in $\delta$ and $\bt$.  Consequently,
\begin{equation}\label{eq:delta-decay}
 \Acal^\delta(t)+\bt^{-1}\Ecal^\delta(t)
 \leq C\exp\!\left(-c\bt^{1-\alpha}(t-1)\right),
 \qquad t\geq1,
\end{equation}
and the corresponding version of \eqref{eq:Z-tail} holds with the same
constants.

Fix $T>1$.  The compactness established in the proof of
Theorem~\ref{thm:main} gives strong convergence of
$\rho^\delta,u^\delta,\theta^\delta$ in $C([0,T];H^1)$ and weak convergence
of the material derivatives in the spaces stated in
Definition~\ref{def:strong}.  The state-variable terms in
\eqref{eq:delta-decay} pass to the vacuum limit by strong convergence.  In
fact, $m^\delta\to m$ and hence, at every fixed time,
\begin{align*}
 \bt^{\alpha-2}\norm{u_x^\delta}_{L^2}^2\longrightarrow
       \bt^{\alpha-2}\norm{u_x}_{L^2}^2,\qquad
 \bt^{\beta-2}\norm{\theta_x^\delta}_{L^2}^2\longrightarrow
       \bt^{\beta-2}\norm{\theta_x}_{L^2}^2,\qquad
 \bt^{-1}\Ecal^\delta\longrightarrow\bt^{-1}\Ecal.
\end{align*}
The material derivatives satisfy, after extraction,
\begin{align*}
 \sqrt{\rho^\delta}\,\dot u^\delta
 \stackrel{*}{\rightharpoonup}\sqrt\rho\,\dot u,\qquad
 \sqrt{\rho^\delta}\,\dot\theta^\delta
 \stackrel{*}{\rightharpoonup}\sqrt\rho\,\dot\theta
 \quad\text{in }L^\infty(1,T;L^2).
\end{align*}
Weak-star lower semicontinuity of their scalar $L^2$ norms therefore passes
\eqref{eq:delta-decay} to the limit for almost every $t$.

For the tail estimate, first fix $1\leq t<T$.  Weak lower semicontinuity
for each nonnegative quadratic dissipation gives
\begin{align*}
 \int_t^T(\Dcal_0+\Dcal_h)(s)\dd s
 &\leq\liminf_{\delta\to0}
       \int_t^T(\Dcal_0^\delta+\Dcal_h^\delta)(s)\dd s
 \leq C\exp\!\left(-c\bt^{1-\alpha}(t-1)\right).
\end{align*}
Letting $T\to\infty$ and using monotone convergence proves
\eqref{eq:decay-tail}.

Finally, \eqref{eq:entropy-coercivity}, the definition of $\Acal$, and the
Dirichlet Poincar\'e inequalities yield
\begin{align*}
 \norm{\rho-m}_{L^2}^2
 &\leq C\bt^{-1}\Ecal,\\
 \bt^{\alpha-2}\norm u_{H^1}^2
 &\leq C\bt^{\alpha-2}\norm{u_x}_{L^2}^2\leq C\Acal,\\
 \bt^{\beta-2}\norm{\theta-\bt}_{H^1}^2
 &\leq C\bt^{\beta-2}\norm{\theta_x}_{L^2}^2\leq C\Acal.
\end{align*}
The two material-derivative terms in \eqref{eq:low-order-decay} occur
explicitly in $\Acal$.  Inserting the limiting form of
\eqref{eq:delta-decay} proves \eqref{eq:low-order-decay}.  The terms
involving only $\rho,u$, and $\theta$ satisfy the estimate at every time by
their $C([0,T];H^1)$ continuity; the material-derivative terms are
understood at their Lebesgue times.

It remains to prove \eqref{eq:density-Lp-decay}.  The uniform density upper
bound and \eqref{eq:low-order-decay} imply, for $2\leq p<\infty$,
\[
 \norm{\rho-m}_{L^p}
 \leq\norm{\rho-m}_{L^\infty}^{1-2/p}
      \norm{\rho-m}_{L^2}^{2/p}
 \leq C_p\exp\!\left(
    -\frac{\lambda_{\rm d}}{p}(t-1)\right).
\]
For $1\leq p\leq2$, the finite measure of $\I$ gives the same conclusion
with exponent $\lambda_{\rm d}/2$.  This proves all the relaxation
assertions of Theorem~\ref{thm:relaxation}.
\end{proof}

\subsection{Transported vacuum and proof of layer sharpening}

The preceding estimates describe decay in integral norms but do not by
themselves identify the evolution of an initial vacuum point.  The next
proposition supplies this geometric information through the particle flow;
it is then combined with the density-gradient estimate to obtain the sharp
vacuum-layer bounds.

\begin{proposition}[Vacuum is transported]\label{prop:vacuum}
Let $(\rho,u,\theta)$ be a solution furnished by
Theorem~\ref{thm:main} and let
$X_t(a,t)=u(X(a,t),t)$, $X(a,0)=a$.  Then
\begin{equation}\label{eq:flow-formula}
 X_a(a,t)=\exp\left(\int_0^t u_x(X(a,s),s)\dd s\right)>0,
\qquad
 \rho(X(a,t),t)X_a(a,t)=\rho_0(a).
\end{equation}
In particular, the zero set is transported by the bi-Lipschitz flow on
every finite time interval.
\end{proposition}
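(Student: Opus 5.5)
The plan is to work first with the Lagrangian flow of the limit solution and then pass the mass identity through the $\delta$-approximation, or verify it directly. By Theorem~\ref{thm:main}, on each $[0,T]$ we have $u\in C([0,T];H^2\cap H_0^1)\cap L^2(0,T;H^3)$. In particular $u$ is continuous on $[0,1]\times[0,T]$, Lipschitz in $x$, and satisfies $\int_0^T\norm{u_x}_{L^\infty}\dd t\leq C_T$, since $H^2\hookrightarrow W^{1,\infty}$. Carath\'eodory/Cauchy--Lipschitz theory then gives a unique absolutely continuous path $X(a,\cdot)$ for each $a$. The boundary condition $u=0$ at $x=0,1$ keeps $X(a,t)\in[0,1]$, and the endpoints are fixed. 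The map $a\mapsto X(a,t)$ is a homeomorphism of $[0,1]$, and Gronwall gives the Lipschitz bounds
\[
 e^{-C_T}|a-b|\leq|X(a,t)-X(b,t)|\leq e^{C_T}|a-b|,
\]
so the flow is bi-Lipschitz on finite time intervals.

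Next I would derive the formula for $X_a$. Because $u_x$ is continuous in $x$ (indeed $u\in L^2H^3$ makes $u_x$ Lipschitz for a.e.\ $t$), the variational equation $\partial_t X_a=u_x(X,t)X_a$ with $X_a(a,0)=1$ holds. I would justify it by differentiating difference quotients $(X(a+h,t)-X(a,t))/h$ and passing to the limit with dominated convergence, using the $L^1_tL^\infty_x$ bound on $u_x$ and its spatial continuity. Solving this linear ODE gives the first identity in \eqref{eq:flow-formula}, and positivity is immediate.

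For the density identity, I would set $\phi(t)=\rho(X(a,t),t)X_a(a,t)$. Since $\rho\in C([0,T];H^2)$ and $\rho_t\in C([0,T];H^1)$, both $\rho$ and $\rho_t$ are continuous in space-time, so the chain rule applies along the absolutely continuous path. It gives
\[
 \phi'=(\rho_t+u\rho_x)X_a+\rho u_xX_a=(\rho_t+(\rho u)_x)X_a=0.
\]
Here the continuity equation holds pointwise, not only a.e., because every term in it is continuous. Hence $\phi(t)=\rho_0(a)$. Alternatively, one can take the limit $\delta\to0$ in \eqref{eq:linear-transport-positive} for the $\delta$-solutions, using uniform convergence of $\rho^\delta$ and convergence of the flows $X^\delta\to X$. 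The latter follows from the uniform convergence of $u^\delta$ and Gronwall with the $L^1L^\infty$ bound on $u_x$.

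Finally, combining the two identities gives $\rho(X(a,t),t)=\rho_0(a)\exp(-\int_0^tu_x\dd s)$, so $\rho(X(a,t),t)=0$ if and only if $\rho_0(a)=0$. Because the flow is a bijection, the zero set at time $t$ is exactly the image under $X(\cdot,t)$ of the initial zero set. The main point needing care is regularity: justifying the chain rule and the variational equation when $u_x$ is only integrable in time. I would handle this by working at the $\delta$-level, where everything is smooth enough, and then passing to the limit via the uniform convergences from Section~\ref{sec:construction}.
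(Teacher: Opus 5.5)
Your proposal is correct and follows essentially the same route as the paper. In both, the integrable Lipschitz bound $\int_0^T\|u_x\|_{L^\infty}\,\mathrm{d}t<\infty$ gives a unique flow that fixes the endpoints, and the variational equation gives the exponential formula for $X_a$ together with the bi-Lipschitz bounds. The chain rule along trajectories, justified by $\rho\in C([0,T];H^2)$ and $\rho_t\in C([0,T];H^1)$, then gives $\frac{d}{dt}\bigl(\rho(X,t)X_a\bigr)=0$ and hence the transport of the zero set.

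Your fallback to the $\delta$-level is not needed. Those solutions lie in the same regularity class as the limit, and $u_x$ is already continuous on $[0,1]\times[0,T]$ because $u\in C([0,T];H^2)$, so your direct argument suffices.
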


\begin{proof}
Fix an arbitrary $T<\infty$.  We first justify the flow map and its
invertibility on $[0,T]$.  By Definition~\ref{def:strong} and the
one-dimensional embedding $H^1(0,1)\hookrightarrow L^\infty(0,1)$,
\begin{equation*}
 L_T:=\int_0^T\norm{u_x(s)}_{L^\infty}\dd s
 \leq C\int_0^T\norm{u(s)}_{H^2}\dd s<\infty.
\end{equation*}
Thus $u(\cdot,t)$ is spatially Lipschitz with an integrable Lipschitz
constant.  The nonautonomous ODE defining $X$ consequently has a unique
solution for every $a\in[0,1]$.  Moreover, because the boundary condition
gives $u(0,t)=u(1,t)=0$, uniqueness implies
\begin{equation*}
 X(0,t)=0,\qquad X(1,t)=1,
 \qquad 0\leq t\leq T,
\end{equation*}
so every trajectory remains in $[0,1]$.

The same spatial regularity permits differentiation of the flow with
respect to its label.  The derivative $J(a,t):=X_a(a,t)$ satisfies the
variational equation
\begin{equation*}
 J_t(a,t)=u_x(X(a,t),t)J(a,t),
 \qquad J(a,0)=1.
\end{equation*}
Solving this scalar equation gives
\begin{equation*}
 X_a(a,t)=\exp\!\left(\int_0^t
 u_x(X(a,s),s)\dd s\right).
\end{equation*}
In particular,
\begin{equation}\label{eq:flow-Jacobian-bounds}
 e^{-L_T}\leq X_a(a,t)\leq e^{L_T}
 \qquad (a\in[0,1],\ 0\leq t\leq T).
\end{equation}
Hence $a\mapsto X(a,t)$ is strictly increasing.  Since it fixes the two
endpoints, it maps $[0,1]$ bijectively onto itself.  Integrating
\eqref{eq:flow-Jacobian-bounds} between two labels gives
\begin{equation*}
 e^{-L_T}|a-b|\leq|X(a,t)-X(b,t)|
 \leq e^{L_T}|a-b|,
\end{equation*}
and the inverse map has the same type of Lipschitz bound.  Thus the flow is
bi-Lipschitz on every finite time interval.

It remains to identify the density transported by this flow.  The strong
solution regularity
$\rho\in C([0,T];H^2)$ and
$\rho_t\in C([0,T];H^1)$ justifies the chain rule along each trajectory.
Using the continuity equation, we obtain
\begin{align*}
 \frac{\dd}{\dd t}\rho(X(a,t),t)
 &=\rho_t(X(a,t),t)+X_t(a,t)\rho_x(X(a,t),t)\\
 &=\rho_t+u\rho_x
 =-\rho u_x
 \qquad\text{at }(X(a,t),t).
\end{align*}
Combining this identity with the variational equation for $X_a$ yields
\begin{align*}
 \frac{\dd}{\dd t}\bigl(\rho(X(a,t),t)X_a(a,t)\bigr)
 &=(-\rho u_x)X_a+\rho(u_xX_a)=0.
\end{align*}
Since $X(a,0)=a$ and $X_a(a,0)=1$, integration in time gives
\begin{equation*}
 \rho(X(a,t),t)X_a(a,t)=\rho_0(a),
\end{equation*}
which proves the second identity in \eqref{eq:flow-formula}.  Equivalently,
\begin{equation*}
 \rho(X(a,t),t)=\rho_0(a)
 \exp\!\left(-\int_0^t u_x(X(a,s),s)\dd s\right).
\end{equation*}

Finally, let
$\mathcal V_0=\{a\in[0,1]:\rho_0(a)=0\}$ and
$\mathcal V(t)=\{x\in[0,1]:\rho(x,t)=0\}$.  Because $X_a>0$ and
$X(\cdot,t)$ is onto, the preceding identity shows both inclusions
\begin{equation*}
 \mathcal V(t)=X(\mathcal V_0,t).
\end{equation*}
Thus an initial vacuum point remains a vacuum point along its particle
path, while a particle starting with positive density cannot become vacuum
at any finite time.  This proves the asserted transport of the zero set.
\end{proof}

\begin{proof}[Proof of the vacuum-layer bounds]
Set $x_{\rm v}(t)=X(a_0,t)$.  Proposition~\ref{prop:vacuum} gives
$\rho(x_{\rm v}(t),t)=0$, which proves the first inequality in
\eqref{eq:gradient-growth}.  Put $f=\rho-m$.  Since
$\int_0^1f\dd x=0$, integration by parts gives the exact representation
\[
 f(x_{\rm v})=\int_0^{x_{\rm v}}s f_x(s)\dd s-\int_{x_{\rm v}}^1(1-s)f_x(s)\dd s.
\]
Hence, by the Cauchy--Schwarz inequality,
\[
 m=|f(x_{\rm v})|\leq\left(\frac{x_{\rm v}^3+(1-x_{\rm v})^3}{3}\right)^{1/2}
 \norm{\rho_x(t)}_{L^2}\leq\frac1{\sqrt{3}}\norm{\rho_x(t)}_{L^2}.
\]
This proves the time-independent lower bound in
\eqref{eq:gradient-growth}.

Since $f$ is continuous and has zero mean, there is $z_t\in[0,1]$ such
that $f(z_t,t)=0$.  Therefore
\begin{equation*}
 m^2=\left|f^2(x_{\rm v}(t),t)-f^2(z_t,t)\right|
 \leq2\norm{\rho(t)-m}_{L^2}\norm{\rho_x(t)}_{L^2}.
\end{equation*}
The density part of \eqref{eq:low-order-decay} now yields
\[
 \norm{\rho_x(t)}_{L^2}\geq \frac{m^2}{2C_{\rm d}^{1/2}}e^{\lambda_{\rm d}(t-1)/2}.
\]
By the definition of $c_{\rm v}$, this is precisely the exponential lower
bound in \eqref{eq:gradient-growth}.  Combining the two pointwise lower
bounds, squaring, and integrating proves
\eqref{eq:gradient-growth-integrated} and completes the proof of
Theorem~\ref{thm:relaxation}.
\end{proof}

\section*{Data availability}
Data sharing is not applicable to this article because no datasets were
generated or analyzed in the course of this study.

\section*{Declaration of competing interest}
The authors declare that they have no known competing financial interests or
personal relationships that could have appeared to influence the work
reported in this paper.

\section*{CRediT authorship contribution statement}
All authors contributed equally to the conceptualization, analysis, and
writing of the manuscript.  All authors read and approved the final version.

\section*{Acknowledgments}
This work was supported in part by the National Natural Science Foundation of
China under Grants 12371221, 12301277, 11831011, and 12161141004; the Science
Foundation of Zhejiang Sci-Tech University under Grant 25062122-Y; and the
Natural Science Foundation of Zhejiang Province under Grant ZCLQN26A0102.
The work was also partially supported by the Fundamental Research Funds for
the Central Universities and the Shanghai Frontiers Science Center of Modern
Analysis.

\bibliographystyle{unsrtnat}
\bibliography{references}

\end{document}